\def\DateTime{4/February/2009, 8:30(JP)}
\def\Version{Version $1.0\beta$}
%%%%%%%%%%%%%%%%%%%%%%%%%%%%%%%%%%%%%%%%%%%%%%%%%%%%%%%%%%%%%%%%%%%%%%%%%%%
% Title     : Estimation of arithmetic linear series                      %
% Author    : Atsushi Moriwaki                                            %
% Version   : 1.0b                                                        %
% Pages     : 34 pages                                                    %
% TeX-Type  : AmSLaTeX                                                    %
% File Name : ArLinSeries.tex                                             %
%%%%%%%%%%%%%%%%%%%%%%%%%%%%%%%%%%%%%%%%%%%%%%%%%%%%%%%%%%%%%%%%%%%%%%%%%%%
\def\yes{\if00}
\def\no{\if01}
\def\iftwelvept{\no}

\def\ifusepdf{\no}
\def\ifpsfont{\yes}

\iftwelvept
\documentclass[leqno,12pt]{amsart}
\else
\documentclass[leqno]{amsart}
\fi
\usepackage{amssymb}
\usepackage{amscd}
\usepackage{latexsym}
\usepackage{verbatim}
\usepackage[all]{xy}

\ifusepdf
\usepackage{hyperref}
\else\fi
\ifpsfont
\usepackage[T1]{fontenc}
\usepackage{times}
\else\fi

%\input xy
%\xyoption{all}

\iftwelvept
\setlength{\topmargin}{0cm}
\setlength{\oddsidemargin}{0cm}
\setlength{\evensidemargin}{0cm}
\setlength{\textheight}{22cm}
\setlength{\textwidth}{16.5cm}
\else\fi

%\setcounter{tocdepth}{1}

%%%%%%%%%%%%%%%%%
%% environment %%
%%%%%%%%%%%%%%%%%
\theoremstyle{plain}
\newtheorem{Theorem}{Theorem}[section]
\newtheorem{Proposition}[Theorem]{Proposition}
\newtheorem{Lemma}[Theorem]{Lemma}
\newtheorem{Corollary}[Theorem]{Corollary}

\newtheorem{Claim}{Claim}[Theorem]

\theoremstyle{definition}

%%%%%%%%%%%%
%% Macros %%
%%%%%%%%%%%%
\def\rom{\textup}
\newcommand{\ZZ}{{\mathbb{Z}}}
\newcommand{\QQ}{{\mathbb{Q}}}
\newcommand{\RR}{{\mathbb{R}}}
\newcommand{\CC}{{\mathbb{C}}}

\newcommand{\KK}{{\mathbb{K}}}

\newcommand{\OO}{{\mathcal{O}}}

\newcommand{\Image}{\operatorname{Image}}
\newcommand{\Ker}{\operatorname{Ker}}
\newcommand{\Coker}{\operatorname{Coker}}
\newcommand{\Spec}{\operatorname{Spec}}

\newcommand{\Supp}{\operatorname{Supp}}
\newcommand{\codim}{\operatorname{codim}}

\newcommand{\rank}{\operatorname{rk}}
\newcommand{\acherncl}{\widehat{{c}}}
\newcommand{\aPic}{\widehat{\operatorname{Pic}}}
\newcommand{\adeg}{\widehat{\operatorname{deg}}}
\newcommand{\avol}{\widehat{\operatorname{vol}}}

\newcommand{\aH}{\hat{H}^0}

\newcommand{\vol}{\operatorname{vol}}

\newcommand{\sub}{\operatorname{sub}}
\newcommand{\quot}{\operatorname{quot}}

\newcommand{\Sym}{\operatorname{Sym}}
\newcommand{\aBs}{\operatorname{Bs}}
\newcommand{\aSBs}{\operatorname{SBs}}%{\operatorname{\bold{B}}}
\newcommand{\Cone}{\operatorname{Cone}}
\newcommand{\Conv}{\operatorname{Conv}}
\newcommand{\CL}{\operatorname{CL}}
\newcommand{\aAmp}{\operatorname{\widehat{Amp}}}
\newcommand{\aBig}{\operatorname{\widehat{Big}}}
\newcommand{\aEff}{\operatorname{\widehat{Eff}}}
\newcommand{\QSat}{\operatorname{Sat}}
\newcommand{\ord}{\operatorname{ord}}

\newcommand{\CQED}{{\unskip\nobreak\hfil\penalty50\quad\null\nobreak\hfil
{$\Box$}\parfillskip0pt\finalhyphendemerits0\par\medskip}}
\newcommand{\rest}[2]{\left.{#1}\right\vert_{{#2}}}

\newcommand{\FF}{{\mathbb{F}}}
%%%

\begin{document}

%%%%%%%%%%%
%% Title %%
%%%%%%%%%%%
\title[Estimation of arithmetic linear series]%
{Estimation of arithmetic linear series 
%\\--- Remarks on Yuan's paper ---
}
\author{Atsushi Moriwaki}
\address{Department of Mathematics, Faculty of Science,
Kyoto University, Kyoto, 606-8502, Japan}
\email{moriwaki@math.kyoto-u.ac.jp}
\date{\DateTime, (\Version)}
%\keywords{}

\maketitle

\renewcommand{\theTheorem}{\Alph{Theorem}}

\section*{Introduction}

In the paper \cite{LazMus},
Lazarsfeld and Musta\c{t}\u{a} propose general and systematic usage of Okounkov's idea in order to study
asymptotic behavior of linear series on an algebraic variety.
It is a very simple way, but it yields a lot of consequences, like Fujita's approximation theorem.
Yuan \cite{YuanVol} generalized this way to the arithmetic situation, and
he established the arithmetic Fujita's approximation theorem, which was also proved by Chen \cite{HChenFujita} independently.
In this paper, we introduce arithmetic linear series and give a general way to estimate them based on Yuan's idea.
As an application, we consider an arithmetic analogue of the algebraic restricted volumes.

\subsection*{Arithmetic linear series}
Let $X$ be a $d$-dimensional projective arithmetic variety and
$\overline{L}$ a continuous hermitian invertible sheaf on $X$.
Let $K$ be a symmetric convex lattice in $H^0(X, L)$ with
\[
K \subseteq B_{\sup}(\overline{L}) := \{ s \in H^0(X, L)_{\RR} \mid \Vert s \Vert_{\sup} \leq 1\},
\]
that is, $K$ satisfies the following properties (for details, see Subsection~\ref{subsec:convex:lattice}):
\begin{enumerate}
\renewcommand{\labelenumi}{\rom{(\arabic{enumi})}}
\item
$K = \{ x \in  \langle K \rangle_{\ZZ} \mid \exists m \in \ZZ_{>0} \ \ mx \in m \ast K \}$,
where $\langle K \rangle_{\ZZ}$ is the $\ZZ$-submodule generated by $K$ and
$m \ast K = \{ x_1 + \cdots + x_m \mid x_1, \ldots, x_m \in K\}$.

\item
$-x \in K$ for all $x \in K$.

\item
$K \subseteq H^0(X, L) \cap B_{\sup}(\overline{L})$.
\end{enumerate}
We call $K$ an {\em arithmetic linear series} of $\overline{L}$.
In the case where $K = B_{\sup}(\overline{L}) \cap H^0(X, L)$,
it is said to be {\em complete}.
One of main results of this paper is a uniform estimation of
the number of the arithmetic linear series in terms of the number of valuation vectors.

\begin{Theorem}
Let $\nu$ be the valuation attached to a good flag 
over a prime $p$ \rom{(}see Subsection~\rom{\ref{subsection:good:flag:over:prime}} for the definition of a good flag over a prime\rom{)}.
Then we have
\begin{multline*}
\vert \#\nu(K \setminus \{ 0\}) \log p - \log \#(K) \vert \\
\leq \left( \log\left(4 p \rank \langle K \rangle_{\ZZ} \right) + 
\frac{\sigma(\overline{L}) + \log\left(2 p \rank \langle K \rangle_{\ZZ} \right)}{\log p}  \log(4) \rank H^0(\OO_X)\right)
\rank \langle K \rangle_{\ZZ},
\end{multline*}
where $\sigma(\overline{L})$ is given by
\[
\sigma(\overline{L}) := \inf_{\text{$\overline{A}$ : ample}}
\frac{\adeg (\acherncl_1(\overline{A})^{d-1} \cdot \acherncl_1(\overline{L}))}{\deg (A_{\QQ}^{d-1})}.
\]
\end{Theorem}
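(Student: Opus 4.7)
The strategy is to compare $\log \#(K)$ with $\#\nu(K \setminus \{0\}) \log p$ by exploiting the filtration of $\langle K \rangle_{\ZZ}$ induced by $\nu$. Because the flag is good over $p$, each nonzero successive graded piece of this filtration is a one-dimensional $\FF_p$-vector space, contributing exactly $\log p$ to the count. Set $r = \rank \langle K \rangle_{\ZZ}$ and $h = \rank H^0(\OO_X)$.

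\textbf{Step 1 (adapted basis and filtration).}
First I would construct a $\ZZ$-basis $s_1, \ldots, s_r$ of $\langle K \rangle_{\ZZ}$ with pairwise distinct valuations $\nu(s_1), \ldots, \nu(s_r)$ by valuation-triangular Gaussian elimination: whenever two basis elements share a $\nu$-value, replace one of them by an integer combination that strictly raises its valuation. For each $v \in \ZZ^d$ define $F_v = \{s \in \langle K \rangle_{\ZZ} : \nu(s) \geq v\} \cup \{0\}$, which is a subgroup by the valuation property $\nu(s + t) \geq \min(\nu(s), \nu(t))$. Under the good-flag-over-$p$ hypothesis, each nonzero quotient $F_v/F_{v^+}$ (with $v^+$ the lex-successor on the support) is cyclic of order exactly $p$, so that $\log \#(F_{v_0}/F_{v_N}) = N \log p$ for the range of relevant values.

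\textbf{Step 2 (counting via the filtration).}
In terms of the adapted basis, each $s \in K$ expands uniquely as $s = \sum_i a_i(s) s_i$ with $a_i(s) \in \ZZ$, and $\nu(s)$ is determined by the lex-minimum of the $\nu(s_i)$ shifted in the first coordinate by the $p$-adic valuations of the coefficients $a_i(s)$. The ``leading term'' map $s \mapsto \nu(s)$ sends $K \setminus \{0\}$ onto $\nu(K \setminus \{0\})$, and its fibers are contained in cosets of sublattices of $\langle K \rangle_{\ZZ}$ with controlled index. Summing fiber sizes yields the upper bound $\log \#(K) \leq \#\nu(K \setminus \{0\}) \log p + r \log(4 p r)$; the matching lower bound comes from showing that enough lattice points of $K$ saturate each graded piece, using the saturation condition (1) on $K$.

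\textbf{Step 3 ($\sigma(\overline{L})$-control of the $\nu_1$-range).}
To close the estimate I would use that $K \subseteq B_{\sup}(\overline{L})$: a section $s$ with $\Vert s \Vert_{\sup} \leq 1$ cannot be $p$-adically too deep. Quantitatively, comparing with an ample hermitian sheaf $\overline{A}$ via the arithmetic intersection product $\adeg(\acherncl_1(\overline{A})^{d-1} \cdot \acherncl_1(\overline{L}))$ shows that the first coordinate $\nu_1$ on $K$ is bounded by $O((\sigma(\overline{L}) + \log(2pr))/\log p)$; the factor $h = \rank H^0(\OO_X)$ enters through the base arithmetic, since global constants on $X$ contribute independently to each fiber. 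This gives the second error term.

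\textbf{Main obstacle.}
The delicate point is that the filtration has up to $\#\nu(K \setminus \{0\})$ steps, which can far exceed $r$ because the first coordinate of $\nu$ records the $p$-adic valuation and thus picks up contributions from all integer multiples within $K$. Arranging the count so that the accumulated boundary error scales as $r$ (the number of adapted basis directions) rather than as $\#\nu(K \setminus \{0\})$ is the crux; this is precisely where the adapted-basis bookkeeping must be combined with the $\sigma(\overline{L})$ cap on $\nu_1$ to produce the claimed uniform estimate.
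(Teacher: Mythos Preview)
Your proposal has a genuine gap at exactly the point you flag as the ``main obstacle.'' You correctly observe that the filtration $F_v$ has on the order of $\#\nu(K\setminus\{0\})$ steps, not $r$ steps, and that the error must nonetheless come out proportional to $r$. But nothing in Steps~1--2 supplies a mechanism for this. An adapted basis with $r$ distinct $\nu$-values does not by itself control $\#(K)$: the contribution of each basis direction to $\#(K)$ is governed by how deep into the $p$-adic filtration the convex body $\Conv_{\RR}(K)$ reaches, and summing those contributions naively reproduces an error of size $\#\nu(K\setminus\{0\})$, not $r$. Your Step~2 asserts the bound $\log\#(K)\leq \#\nu(K\setminus\{0\})\log p + r\log(4pr)$ but gives no argument, and the lower bound ``using the saturation condition'' is equally unspecified. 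Also, the Gaussian-elimination construction in Step~1 is not obviously terminating: replacing $s_i$ by $s_i-as_j$ raises $\nu(s_i)$ strictly, but $\nu$ is unbounded on $\langle K\rangle_{\ZZ}$ (multiplication by $p$ increases $\nu_1$), so you need a separate finiteness argument.

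The paper's proof is organized quite differently, and the organization is what makes the error collapse to $O(r)$. One slices by the first coordinate: set $M_k=\langle K\rangle_{\ZZ}\cap H^0(X,L-kY_1)$ and write $\#\nu(K\setminus\{0\})=\sum_{k\geq 0}\#\nu_{Y_1}\bigl(r_k(M_k\cap\Delta)\setminus\{0\}\bigr)$ where $\Delta=\Conv_{\RR}(K)$ and $r_k$ is restriction to $Y_1$. A preparatory proposition bounds each summand above and below by differences of the form $\log\#(M_k\cap c\Delta)-\log\#(M_{k+1}\cap c'\Delta)$ for suitable dilation factors $c,c'$ depending on $\beta=pr$. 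Summing over $k$ \emph{telescopes}: the interior terms cancel and one is left with the single boundary term $\log\#(M_0\cap c\Delta)$ plus a sum of dilation discrepancies $\log\#(M_k\cap 2c\Delta)-\log\#(M_k\cap c\Delta)$, each of which is $\leq \log(4)\,r$ by an elementary lattice-point estimate. The number of nonzero terms in that residual sum is $\#\{k\geq 1:\aH(\overline{L}-kY_1+\overline{\OO}(\log 2\beta))\neq\{0\}\}$, and \emph{this} is what the $\sigma(\overline{L})$ intersection bound controls (and where $\rank H^0(\OO_X)$ enters, via $\deg(Y_1/\FF_p)$). The telescoping is the missing idea in your outline; without it the $\sigma(\overline{L})$ bound on the $\nu_1$-range only limits the number of slices, and you still accumulate an error per slice that you have not shown how to keep uniform.
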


An ideal for the proof of the above theorem is essentially same as one of Yuan's paper \cite{YuanVol},
in which he treated only the complete arithmetic linear series in my sense.
A new point is the usage of convex lattices, that is, a general observation for arithmetic linear series.
By this consideration, we obtain several advantages in applications.
For example, we have the following theorem, which is a stronger version of \cite[Theorem~3.3]{YuanVol}.
The arithmetic Fujita's approximation theorem is
an immediate consequence of it.

\begin{Theorem}
Let $\overline{L}$ be a big continuous hermitian invertible sheaf on $X$.
For any positive $\epsilon$, there is a positive integer $n_0 = n_0(\epsilon)$ such that,
for all $n \geq n_0$,
\[
\liminf_{k\to\infty} \frac{
\log\#\CL( V_{k,n} )}{n^dk^d}
\geq \frac{\avol(\overline{L})}{d!} - \epsilon,
\]
where $V_{k,n} = \{ s_1 \otimes \cdots \otimes s_k  \in H^0(X, knL) \mid s_1, \ldots, s_k \in \aH(X, n \overline{L})\}$ and $\CL( V_{k,n} )$ is 
the convex lattice hull of $V_{k,n}$, that is,
\[
\CL( V_{k,n} ) = \{ x \in  \langle V_{k,n} \rangle_{\ZZ} \subseteq H^0(X, knL)  \mid 
\exists m \in \ZZ_{>0} \ \ mx \in m \ast V_{k,n} \}
\]
\rom{(}cf. for details, see Subsection~\rom{\ref{subsec:convex:lattice}}).
\end{Theorem}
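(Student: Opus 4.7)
The plan is to apply Theorem~A with the convex lattice $K=\CL(V_{k,n})$, which is a symmetric arithmetic linear series of $kn\overline{L}$: by the sup-norm inequality $\|s_1\otimes\cdots\otimes s_k\|_{\sup}\leq\prod_i\|s_i\|_{\sup}\leq 1$ we have $V_{k,n}\subseteq B_{\sup}(kn\overline{L})\cap H^0(X,knL)$, and saturation, symmetry and containment in $B_{\sup}$ are preserved when passing to $\CL$. Fixing a good flag over a prime $p$ with attached valuation $\nu$, Theorem~A then gives
\[
\log\#\CL(V_{k,n})\;\geq\;\#\nu\bigl(\CL(V_{k,n})\setminus\{0\}\bigr)\log p\;-\;E(k,n,p),
\]
with $\rank\langle\CL(V_{k,n})\rangle_{\ZZ}\leq\rank H^0(X,knL)=O\bigl((kn)^{d-1}\bigr)$ and $\sigma(kn\overline{L})=kn\,\sigma(\overline{L})$ in the error term $E(k,n,p)$.

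For the main term I use that $\nu$ is a valuation, so $\nu(s_1\otimes\cdots\otimes s_k)=\nu(s_1)+\cdots+\nu(s_k)$; consequently $\nu(V_{k,n}\setminus\{0\})$ coincides with the $k$-fold Minkowski sum $k\ast\Gamma_n$, where $\Gamma_n:=\nu(\aH(X,n\overline{L})\setminus\{0\})\subseteq\ZZ^d$. Since $V_{k,n}\subseteq\CL(V_{k,n})$,
\[
\#\nu\bigl(\CL(V_{k,n})\setminus\{0\}\bigr)\;\geq\;\#(k\ast\Gamma_n).
\]
A Khovanskii-type asymptotic for Minkowski sums then yields, for any $n$ with $\Gamma_n$ of full affine rank,
\[
\lim_{k\to\infty}\frac{\#(k\ast\Gamma_n)}{k^d}\;=\;\vol\bigl(\Conv(\Gamma_n)\bigr).
\]

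I next connect $\vol(\Conv(\Gamma_n))$ to $\avol(\overline{L})$ through the arithmetic Okounkov body $\Delta(\overline{L})$ of $\nu$. Because tensor products of small sections are small, the sets $\aH(X,m\overline{L})$ form a graded semigroup, so $\Conv(\Gamma_n)/n$ exhausts $\Delta(\overline{L})$ and $\vol(\Conv(\Gamma_n))/n^d\to\vol(\Delta(\overline{L}))$. Applying Theorem~A to the \emph{complete} linear series $\aH(X,m\overline{L})$ and sending $m\to\infty$ identifies the volume of the Okounkov body: $\vol(\Delta(\overline{L}))\log p=\avol(\overline{L})/d!$. Together, for any $\epsilon>0$ there is an $n_0=n_0(\epsilon,p)$ with
\[
\liminf_{k\to\infty}\frac{\#\nu(\CL(V_{k,n})\setminus\{0\})\log p}{n^dk^d}\;\geq\;\frac{\avol(\overline{L})}{d!}-\frac{\epsilon}{2}\qquad(n\geq n_0).
\]

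It remains to absorb the error. Dividing $E(k,n,p)$ by $n^dk^d$ splits it into a part of order $\log(kn)/(kn)$ that vanishes as $k\to\infty$ with $n$ fixed, and a persistent part of order $C_X\,\sigma(\overline{L})/\log p$ with $C_X$ depending only on $X$ (through $\rank H^0(\OO_X)$ and the numerical constants). Choosing $p$ sufficiently large at the outset — which is possible by the existence of good flags over arbitrarily large primes — makes this at most $\epsilon/2$, and combining with the previous step gives the stated inequality. The main obstacle is precisely this final balancing: $p$ must be chosen large enough to beat the $\sigma(\overline{L})/\log p$ term uniformly in $k,n$, and $n_0$ must then be chosen (depending on this $p$) so that both the Okounkov approximation and the full-rank hypothesis on $\Gamma_n$ hold.
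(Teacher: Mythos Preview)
Your argument is essentially the paper's own: the paper defers this theorem to the case $Y=X$ of Theorem~\ref{thm:vol:arith:rest:linear:system}(3), whose proof proceeds exactly as you outline --- apply the estimate of Corollary~\ref{thm:main:cor} (your Theorem~A) to $K_{k,n}=\CL(V_{k,n})$, use $\nu(K_{k,n}\setminus\{0\})\supseteq k\ast\Gamma_n$, invoke \cite[Proposition~3.1]{LazMus} (your ``Khovanskii-type asymptotic'') to get $\#(k\ast\Gamma_n)/(kn)^d\to\vol(\Delta)$ up to $\epsilon'$, and then choose the prime $p$ large so that the $c/\log p$ error is absorbed.

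One small correction: the identity $\vol(\Delta(\overline{L}))\log p=\avol(\overline{L})/d!$ you assert is \emph{not} an equality for fixed $p$; applying Theorem~A to the complete series and letting $m\to\infty$ only gives
\[
\bigl|\vol(\Delta)\log p-\avol(\overline{L})/d!\bigr|\leq \frac{c}{\log p}
\]
(this is exactly \eqref{eqn:thm:vol:arith:rest:linear:system:1} in the paper). This does not damage your argument, since you already choose $p$ large to kill a term of the same order, but you should fold this extra $c/\log p$ into the $\epsilon/2$ budget rather than treat it as zero.
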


\subsection*{Arithmetic analogue of restricted volume}
For further applications, let us consider an arithmetic analogue of the restricted volume.
Let  $Y$ be a $d'$-dimensional arithmetic subvariety of $X$, that is, $Y$ is an integral subscheme of $X$ such that $Y$ is flat over $\Spec(\ZZ)$.
Let $\overline{L}$ be a continuous hermitian invertible sheaf on $X$.
We denote $\Image(H^0(X, L) \to H^0(Y, \rest{L}{Y}))$ by $H^0(X|Y, L)$.
Let $\Vert\cdot\Vert_{\sup,\quot}^{X|Y}$ be the the quotient norm of  $H^0(X|Y, L) \otimes_{\ZZ} \RR$ induced by
the surjective homomorphism 
\[
H^0(X, L) \otimes_{\ZZ} \RR \to H^0(X|Y, L) \otimes_{\ZZ} \RR
\]
and
the norm $\Vert\cdot\Vert_{\sup}$ on $H^0(X, L) \otimes_{\ZZ} \RR$. %Then
We define $\aH_{\quot}(X|Y, \overline{L})$ and $\avol_{\quot}(X|Y, \overline{L})$ to be
\begin{align*}
\aH_{\quot}(X|Y, \overline{L}) & := \left\{ s \in H^0(X|Y, L) \mid \Vert s \Vert_{\sup,\quot}^{X|Y} \leq 1 \right\}\quad\text{and}\\
\avol_{\quot}(X|Y, \overline{L}) & := \limsup_{m\to\infty} \frac{\log \# \aH_{\quot}(X|Y, m\overline{L})}{m^{d'}/d'!}.
\end{align*}
Note that $\aH_{\quot}(X|Y, \overline{L})$ is an arithmetic linear series of $\rest{\overline{L}}{Y}$.
A continuous hermitian invertible sheaf $\overline{L}$ is said to be {\em $Y$-effective}
if there is $s \in \aH(X, \overline{L})$ with $\rest{s}{Y} \not= 0$.
Moreover, $\overline{L}$ is said to be {\em $Y$-big} if
there are a positive integer $n$,
an ample $C^{\infty}$-hermitian invertible sheaf $\overline{A}$ and
a $Y$-effective continuous hermitian invertible sheaf $\overline{M}$ such that
$n \overline{L} = \overline{A} + \overline{M}$.
The semigroup consisting of isomorphism  classes of $Y$-big continuous hermitian invertible sheaves
is denoted by $\aBig(X;Y)$.
Then we have the following theorem, which is a generalization of \cite{ChenBig} and \cite[Theorem~2.7 and Theorem~B]{YuanVol}.

\begin{Theorem}
%\label{thm:vol:arith:rest:linear:system}
\begin{enumerate}
\renewcommand{\labelenumi}{\rom{(\arabic{enumi})}}
\item
If $\overline{L}$ is a $Y$-big continuous hermitian invertible sheaf on $X$, then
$\avol_{\quot}\left(X|Y, \overline{L}\right) > 0$ and
\[
%\hspace{4em}
\avol_{\quot}\left(X|Y, \overline{L}\right) = \lim_{m\to\infty} \frac{\log \# \aH_{\quot}(X|Y, m\overline{L})}{m^{d'}/d'!}.
\]
In particular, $\avol_{\quot}(X|Y, n \overline{L}) = n^{d'} \avol_{\quot}(X|Y, \overline{L})$.
\item
$\avol_{\quot}\left(X|Y, -\right)^{\frac{1}{d'}}$ is concave on $\aBig(X;Y)$, that is,
\[
\hspace{4em}
\avol_{\quot}\left(X|Y, \overline{L}+\overline{M}\right)^{\frac{1}{d'}}
\geq \avol_{\quot}\left(X|Y, \overline{L}\right)^{\frac{1}{d'}} +
\avol_{\quot}\left(X|Y, \overline{M}\right)^{\frac{1}{d'}}.
\]
holds for any 
$Y$-big continuous hermitian invertible sheaves $\overline{L}$ and $\overline{M}$ on $X$.

\item
If $\overline{L}$ is a $Y$-big continuous hermitian invertible sheaf on $X$, then,
for any positive number $\epsilon$, there is a positive integer $n_0 = n_0(\epsilon)$ such that,
for all $n \geq n_0$,
\begin{multline*}
\hspace{4em}
\liminf_{k\to\infty} \frac{
\log\#\CL\left( \{ s_1 \otimes \cdots \otimes s_k \mid s_1, \ldots, s_k \in \aH_{\quot}(X|Y, n \overline{L})\} \right)}{n^{d'}k^{d'}} \\
\geq \frac{\avol_{\quot}(X|Y, \overline{L})}{d'!} - \epsilon,
\end{multline*}
where the convex lattice hull is considered in $H^0(X|Y, knL)$.

\item
If $X_{\QQ}$ is smooth over $\QQ$ and $\overline{A}$ is an ample $C^{\infty}$-hermitian invertible sheaf on $X$, then
\begin{align*}
\hspace{4em}
\avol_{\quot}\left(X|Y, \overline{A}\right) & = \avol(Y, \rest{\overline{A}}{Y}) \\
& =
\lim_{m\to\infty} \frac{\log \# \Image(\aH(X, m\overline{A}) \to H^0(X|Y, mA))}{m^{d'}/d'!}.
\end{align*}
\end{enumerate}
\end{Theorem}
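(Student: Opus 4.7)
All four assertions are driven by Theorem~A applied to the restricted arithmetic linear series $K_m := \aH_{\quot}(X|Y, m\overline{L})$, which by hypothesis is an arithmetic linear series of $m\rest{\overline{L}}{Y}$ on~$Y$.  Fix a good flag over a prime~$p$ on~$Y$ and let $\nu$ denote the attached valuation taking values in~$\NN^{d'}$.  The quantitative pivot is the bound $\rank\langle K_m\rangle_{\ZZ}\leq h^0(Y_{\QQ}, m\rest{L}{Y_{\QQ}})=O(m^{d'-1})$ together with $\sigma(m\rest{\overline{L}}{Y})=m\,\sigma(\rest{\overline{L}}{Y})$: Theorem~A then yields
\[
\bigl|\log\#K_m - \#\nu(K_m\setminus\{0\})\log p\bigr| = O(m^{d'-1}\log m) + O(m^{d'}/\log p),
\]
so after dividing by $m^{d'}/d'!$ the error is $O(\log m/m)+O(1/\log p)$, which can be made arbitrarily small by first taking $p$ large and then letting $m\to\infty$.

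For~(1), assemble the graded sub-semigroup $\Gamma = \bigsqcup_{m\geq 0}\{m\}\times\nu(K_m\setminus\{0\}) \subseteq \NN\times\NN^{d'}$, which is closed under addition by submultiplicativity of $\Vert\cdot\Vert_{\sup}$ and additivity of~$\nu$.  The Khovanskii--Okounkov semigroup lemma (in the form used by Lazarsfeld--Musta\c{t}\u{a}) produces a limit for $\#\Gamma_m/m^{d'}$; combined with the displayed estimate this forces $\log\#K_m/(m^{d'}/d'!)$ to converge.  Positivity uses $Y$-bigness: write $n\overline{L}=\overline{A}+\overline{M}$ with $\overline{A}$ ample and pick $s_0\in\aH(X,\overline{M})$ with $\rest{s_0}{Y}\neq 0$; multiplication by $s_0^{\otimes k}$ turns the large stock of small sections of $kn\overline{A}$ provided by arithmetic Hilbert--Samuel into elements of $K_{kn}$ with controlled norm, yielding the required $m^{d'}$-type growth.

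Parts~(2) and~(3) are modelled on the arithmetic Fujita approximation (the second Theorem of the Introduction).  For~(3), set $V_{k,n}=\{s_1\otimes\cdots\otimes s_k : s_i\in K_n\}$; submultiplicativity of $\Vert\cdot\Vert_{\sup}$ gives $\CL(V_{k,n})\subseteq K_{kn}$, while $\nu(V_{k,n})\supseteq k\cdot\nu(K_n)$, and the cardinality of this set approaches $k^{d'}$ times the volume of the Okounkov body $\Delta(\Gamma)$ as $n\to\infty$; feeding this into Theorem~A converts it into the claimed lower bound on $\log\#\CL(V_{k,n})$.  Part~(2) is then deduced from~(3) by a Brunn--Minkowski argument: Minkowski sums of valuation images for $\overline{L}$ and $\overline{M}$ land inside the images for $\overline{L}+\overline{M}$, and the lattice-point Brunn--Minkowski inequality gives $d'$-th root concavity upon passing to the limit in~$n$.

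Part~(4) uses arithmetic Serre vanishing.  When $\overline{A}$ is ample and $X_{\QQ}$ is smooth, the restriction $H^0(X,mA)\to H^0(Y,mA|_Y)$ is surjective for $m\gg 0$, so $H^0(X|Y,mA)=H^0(Y,mA|_Y)$; moreover the quotient sup-norm differs from the intrinsic sup-norm on~$Y$ by a factor $e^{o(m)}$ via a Gromov-type estimate built from the positivity of $\overline{A}$.  Consequently $\#\aH_{\quot}(X|Y,m\overline{A})$ and $\#\aH(Y, m\rest{\overline{A}}{Y})$ agree to leading order, giving the equality with $\avol(Y,\rest{\overline{A}}{Y})$.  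The principal obstacle throughout is the $O(1/\log p)$ error in Theorem~A: because $\sigma(m\overline{L})$ scales linearly in~$m$, no single prime suffices, and interchanging the limits $p\to\infty$ and $m\to\infty$ (ensuring that good flags on~$Y$ exist over arbitrarily large primes) is the most delicate bookkeeping step and propagates through parts~(1)--(3).
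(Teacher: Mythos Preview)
Your treatment of parts~(1)--(3) follows essentially the same line as the paper: pass to the Okounkov semigroup on~$Y$ via a good flag over a prime~$p$, use Theorem~A (in the form of its Corollary giving $|\log\#K_m - \#\nu(K_m\setminus\{0\})\log p| \leq cm^{d'}/\log p$) to compare $\log\#K_m$ with lattice-point counts, invoke the Lazarsfeld--Musta\c{t}\u{a} limit for $\#\nu(K_m\setminus\{0\})/m^{d'}$, and then send $p\to\infty$.  One point you skate over: the semigroup lemma requires that $\Gamma$ generate $\ZZ^{d'+1}$, and this is where $Y$-bigness enters non-trivially---the paper isolates this as a separate proposition (constructing sections with $\nu$-image exactly $e_1,\ldots,e_{d'}$ via ample twists).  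Your organizational remark that (2) is ``deduced from~(3)'' is not how the paper proceeds; it proves~(2) directly by applying continuous Brunn--Minkowski to the Okounkov bodies $\Delta_{\overline{L}}+\Delta_{\overline{M}}\subseteq\Delta_{\overline{L}+\overline{M}}$ and then using the $p\to\infty$ estimate.  Either route works.

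Part~(4), however, has a genuine gap.  You assert that the quotient sup-norm on $H^0(X|Y,mA)$ differs from the intrinsic sup-norm on~$Y$ by a factor~$e^{o(m)}$ via a ``Gromov-type estimate built from the positivity of~$\overline{A}$.''  A Gromov inequality compares $L^2$ and sup norms on a \emph{fixed} space; it does not furnish norm-controlled extensions of sections from~$Y$ to~$X$, which is what the comparison $\Vert\cdot\Vert^{X|Y}_{\sup,\quot}$ vs.\ $\Vert\cdot\Vert_{Y,\sup}$ requires.  The paper instead invokes Zhang's extension theorem (which needs $X_{\QQ}$ smooth---this is precisely where the hypothesis is used): given $s\in H^0(Y, m\rest{A}{Y})$ small, one finds $\tilde s\in H^0(X,mA)\otimes\RR$ with $\rest{\tilde s}{Y}=s$ and $\Vert\tilde s\Vert_{\sup}\leq e^{m\epsilon}\Vert s\Vert_{Y,\sup}$.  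Combined with part~(3) applied on~$Y$ itself, this yields $\avol_{\quot}(X|Y,\overline{A})\geq \avol(Y,\rest{\overline{A}}{Y})-\epsilon(\text{const})$.  The other equality (that $\avol_{\quot}$ equals the $\Image$-limit) is handled by a separate lemma producing, for the defining ideal~$I$ of~$Y$, a free $\ZZ$-basis of $H^0(X,mA\otimes I)$ with sup-norm $\leq e^{-\epsilon_0 m}$; this lets one round an $\RR$-extension of any $s\in\aH_{\quot}(X|Y,m(\overline{A}-\overline{\OO}(\epsilon)))$ to an honest integral extension in $\aH(X,m\overline{A})$.  Your sketch does not supply either of these mechanisms.
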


Let $C^0(X)$ be the set of real valued continuous functions $f$ on $X(\CC)$ 
such that $f$ is invariant under  the complex conjugation map on $X(\CC)$.
We denote the group of isomorphism classes of continuous
hermitian invertible sheaves on $X$ by $\aPic(X;C^0)$.
Let $\overline{\OO} : C^0(X) \to \aPic(X;C^0)$ be the homomorphism given by
\[
\overline{\OO}(f) = (\OO_X, \exp(-f)\vert\cdot\vert_{can}).
\]
$\aPic_{\RR}(X;C^0)$ is defined to be 
\[
\aPic_{\RR}(X;C^0) := \frac{\aPic(X;C^0) \otimes \RR}{\left\{ \sum_i \overline{\OO}(f_i) \otimes x_i \mid 
f_i \in C^0(X), x_i \in \RR \ (\forall i),\  \sum_i x_if_i = 0 \right\}}.
\]
Let $\gamma : \aPic(X;C^0) \to \aPic_{\RR}(X;C^0)$ be the natural homomorphism given by 
the composition of homomorphisms
\[
\aPic(X;C^0) \to
\aPic(X;C^0)  \otimes \RR \to \aPic_{\RR}(X;C^0).
\]
Let  $\aBig_{\RR}(X;Y)$ be the cone in $\aPic_{\RR}(X;C^0)$ generated by
$\{ \gamma(\overline{L}) \mid \overline{L} \in \aBig(X;Y) \}$.
Note that $\aBig_{\RR}(X;Y)$ is an open set in $\aPic_{\RR}(X;C^0)$ in the strong topology, that is,
$\aBig_{\RR}(X;Y) \cap W$ is an open set in $W$ in the usual topology for any finite dimensional vector subspace $W$ of $\aPic_{\RR}(X;C^0)$.
The next theorem guarantees that 
\[
\avol_{\quot}(X|Y, -) : \aBig(X;Y) \to \RR
\]
extends to a continuous function $\avol''_{\quot}(X|Y,-) : \aBig_{\RR}(X;Y) \to \RR$,
which can be considered as a partial generalization of \cite{MoCont} and \cite{MoExt}.

\begin{Theorem}
%\label{cor:thm:vol:arith:rest:linear:system:R}
There is a unique positive valued continuous function 
\[
\avol''_{\quot}(X|Y,-) : \aBig_{\RR}(X;Y) \to \RR
\]
with the following properties:
\begin{enumerate}
\renewcommand{\labelenumi}{\rom{(\arabic{enumi})}}
\item
The following diagram is commutative:
\[
\xymatrix{
\aBig(X;Y)  \ar[rr]^(.57){\avol_{\quot}(X|Y,-)} \ar[d]_{\gamma} & & \RR \\
\aBig_{\RR}(X;Y) \ar[rru]_(.5){\ \ \avol''_{\quot}(X|Y-)} & \\
}
\]

\item
$\avol''_{\quot}(X|Y,-)^{\frac{1}{d'}}$ is positively homogeneous and concave on
$\aBig_{\RR}(X;Y)$, that is,
\[
\hspace{3em}
\begin{cases}
\avol''_{\quot}(X|Y,\lambda x)^{\frac{1}{d'}} = \lambda \avol''_{\quot}(X|Y,x)^{\frac{1}{d'}} \\
\avol''_{\quot}\left(X|Y, x + y\right)^{\frac{1}{d'}}
\geq \avol''_{\quot}\left(X|Y, x\right)^{\frac{1}{d'}} +
\avol''_{\quot}\left(X|Y, y\right)^{\frac{1}{d'}}
\end{cases}
\]
hold
for all $\lambda \in \RR_{>0}$ and $x, y \in \aBig_{\RR}(X;Y)$.

\end{enumerate}
\end{Theorem}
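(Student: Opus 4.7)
My plan is to extend $\avol_{\quot}(X|Y,-)$ in three stages—by homogeneity to $\QQ$-rational combinations, descent through the defining relations of $\aPic_{\RR}$, and continuity to all real points—mirroring the strategy used in Moriwaki's earlier continuity results for the arithmetic volume. Uniqueness is immediate from density: within any finite-dimensional subspace $W\subset\aPic_{\RR}(X;C^0)$, the $\QQ$-rational points of $W\cap\aBig_{\RR}(X;Y)$ are dense in $W\cap\aBig_{\RR}(X;Y)$ in the usual topology, so two continuous extensions must agree.

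For existence, I first set $\avol''_{\quot}(X|Y,\gamma(\overline{L})/n) := n^{-d'}\avol_{\quot}(X|Y,\overline{L})$ for $\overline{L}\in\aBig(X;Y)$ and $n\in\ZZ_{>0}$. Well-definedness uses the homogeneity $\avol_{\quot}(X|Y,n\overline{L})=n^{d'}\avol_{\quot}(X|Y,\overline{L})$ from part (1) of the preceding theorem, and exploits that $\overline{\OO}\colon C^0(X)\to\aPic(X;C^0)$ is a group homomorphism, so $\overline{\OO}(\lambda f)=\lambda\overline{\OO}(f)$ holds already in $\aPic(X;C^0)\otimes\QQ$ for every $\lambda\in\QQ$; hence all the $\aPic_{\RR}$-defining relations collapse to $\overline{\OO}(0)=0$ at the rational level. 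By part (2) of the preceding theorem, $(\avol''_{\quot}(X|Y,-))^{1/d'}$ is then positively homogeneous and concave on the convex cone $\aBig_{\QQ}(X;Y)$, hence continuous on its interior provided it is locally bounded above there. Granting this, one extends $\avol''_{\quot}$ to $\aBig_{\RR}(X;Y)$ along any $\QQ$-dense sequence; homogeneity and concavity pass to the limit, and positivity on $\aBig_{\RR}(X;Y)$ follows from positivity on $\aBig(X;Y)$ together with concavity and homogeneity.

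The main technical obstacle is the local upper boundedness of $\avol''_{\quot}$ on compact subsets of each finite-dimensional slice $W\cap\aBig_{\RR}(X;Y)$. This is where Theorem~A of the excerpt becomes essential: applied to $K=\aH_{\quot}(X|Y,m\overline{L})$ with a fixed good flag over a prime $p$, it bounds $\log\#K$ from above by $\#\nu(K\setminus\{0\})\log p$ plus an error that is linear in $\rank\langle K\rangle_{\ZZ}$ with coefficient depending only logarithmically on that rank and linearly on $\sigma(\overline{L})$. Since $\sigma(-)$ is affine in $\overline{L}$ (being a limit of arithmetic intersection numbers) and hence uniformly bounded on compact subsets of $W$, and since $\rank\langle K\rangle_{\ZZ}\le\rank H^0(X,mL)$ grows like a fixed polynomial of degree $d$ in $m$ independent of $\overline{L}$ in a bounded family, one deduces a uniform upper bound for $m^{-d'}\log\#K$ on compacta of $W\cap\aBig_{\RR}(X;Y)$. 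Once this local boundedness is established, standard convex analysis—continuity of a positively homogeneous concave function on the interior of a convex cone in a topological vector space, together with density of $\QQ$-points in the strong topology—completes the proof.
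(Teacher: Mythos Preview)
Your argument has a genuine gap at the descent step. You assert that ``all the $\aPic_{\RR}$-defining relations collapse to $\overline{\OO}(0)=0$ at the rational level,'' which would amount to $\rho:\aPic_{\QQ}(X;C^0)\to\aPic_{\RR}(X;C^0)$ being injective. The paper explicitly warns that this can fail. The subgroup $N(X)$ consists of elements $\sum_i\overline{\OO}(f_i)\otimes a_i$ with $a_i\in\RR$ and $\sum_i a_if_i=0$; such an element can lie in $\aPic_{\QQ}(X;C^0)$ without admitting any presentation with rational~$a_i$. Your observation that $\overline{\OO}$ is a group homomorphism handles only the rational case, so well-definedness of $\avol''_{\quot}(X|Y,\gamma(\overline{L})/n)$ is not established by your argument.

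The paper resolves this differently. It first extends $\avol_{\quot}$ by homogeneity to $\aBig_{\QQ}(X;Y)$ and then by concavity to a continuous function $\avol'_{\quot}$ on $\aBig_{\otimes\RR}(X;Y)$; concavity on an open convex set already forces local Lipschitz continuity, so your detour through Theorem~A for local upper boundedness is unnecessary. Only afterwards does it descend to $\aBig_{\RR}(X;Y)$, and for this it needs the estimate
\[
\bigl|\avol_{\quot}(X|Y,\overline{L}+\overline{\OO}(f)) - \avol_{\quot}(X|Y,\overline{L})\bigr|
\;\le\; d'\,\vol(X_{\QQ}|Y_{\QQ},L_{\QQ})\,\Vert f\Vert_{\sup},
\]
which relies on the quotient-norm assignment being \emph{proper} (namely $\aH_{\quot}(X|Y,\overline{L}+\overline{\OO}(\lambda))=H^0(X|Y,L)\cap e^{\lambda}\Delta$ for a fixed bounded symmetric convex body $\Delta$). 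Given $z\in N(X)$ one approximates it in $\aPic_{\otimes\RR}(X;C^0)$ by elements $\overline{\OO}(\phi_n)$ with $\Vert\phi_n\Vert_{\sup}\to 0$, and the estimate then forces $\avol'_{\quot}(X|Y,x+z)=\avol'_{\quot}(X|Y,x)$. This analytic input is precisely what your proposal lacks; without it the passage through $N(X)$ cannot be justified.
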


\subsection*{Acknowledgements}
I would like to thank Prof. Yuan for communications.

%
%\vfill\eject
%

\renewcommand{\thesubsubsection}{\arabic{subsubsection}}

\bigskip
\renewcommand{\theequation}{CT.\arabic{subsubsection}.\arabic{Claim}}
\subsection*{Conventions and terminology}
We fix several conventions and terminology of this paper.

\subsubsection{}
\label{CT:sub:semigroup:monoid}
Let $M$ be a $\ZZ$-module, and let $A$ be a {\em sub-semigroup} of $A$, that is,
$x + y \in A$ holds for all $x, y \in A$. If $0 \in A$, then
$A$ is called a {\em sub-monoid} of $M$.
The {\em saturation} $\QSat(A)$ of $A$ in $M$ is defined by
\[
\QSat(A) := \{ x \in M \mid \text{$n x \in A$ for some positive integer $n$} \}.
\]
It is easy to see that $\QSat(A)$ is a sub-semigroup of $M$.
If $A = \QSat(A)$, then $A$ is said to be {\em saturated} .

\subsubsection{}
\label{CT:convex:set}
Let $\KK$ be either $\QQ$ or $\RR$, and let $V$ be a vector space over $\KK$.
A subset $C$ of $V$ is called a convex set in $V$ if
$t x + (1-t)y \in C$ for all $x, y \in C$ and $t \in \KK$ with $0\leq t \leq 1$.
For a subset $S$ of $V$, it is easy to see that
the subset
\[
\{ t_1 s_1 + \cdots + t_r s_r \mid s_1, \ldots, s_r \in S,\ t_1, \ldots, t_r \in \KK_{\geq 0},\ 
t_1 + \cdots + t_r = 1 \}
\]
is a convex set. It is called the {\em convex hull by $S$} and is denoted by
$\Conv_{\KK}(S)$.
Note that $\Conv_{\KK}(S)$ is the smallest convex set containing $S$.
A function $f : C  \to \RR$ on a convex set $C$ is said to be  {\em concave over $\KK$} if
$f(t x + (1-t) y) \geq t f(x) + (1-t)f(y)$ holds for any $x, y \in C$ and $t \in \KK$ with $0 \leq t \leq 1$.

\subsubsection{}
\label{CT:cone}
Let $\KK$ and $V$ be the same as in the above \ref{CT:convex:set}.
A subset $C$ of $V$ is called  a {\em cone} in $V$ if the following conditions are satisfied:
\begin{enumerate}
\renewcommand{\labelenumi}{\rom{(\alph{enumi})}}
\item
$x + y \in C$ for any $x, y \in C$.

\item
$\lambda x \in C$ for any $x \in C$ and $\lambda \in \KK_{> 0}$.
\end{enumerate}
Note that a cone is a sub-semigroup of $V$.
Let $S$ be a subset of $V$. The smallest cone containing $S$, that is,
\[
\{ \lambda_1 a_1 + \cdots + \lambda_r a_r \mid a_1, \ldots, a_r \in S,\ \lambda_1, \ldots, \lambda_r \in \KK_{> 0} \}
\]
is denoted by $\Cone_{\KK}(S)$. It is called the {\em cone generated by $S$}.

\subsubsection{}
\label{CT:strong:topology}
Let $\KK$ and $V$ be the same as in the above \ref{CT:convex:set}.
The strong topology on $V$ means that  a subset $U$ of $V$ is open set in this topology if and only if,
 for any finite dimensional vector subspace $W$ of $V$ over $\KK$,
$U \cap W$ is open in $W$ in the usual topology.

It is easy to see that a linear map of vector spaces over $\KK$ is continuous in the strong topology.
Moreover,
a surjective linear map of vector spaces over $\KK$ is an open map in the strong topology.
In fact,
let 
$f : V \to V'$ be a surjective homomorphism of vector spaces over $\KK$,
$U$ an open set of $V$, and
$W'$ a finite dimensional vector subspace of $V'$ over $\KK$.
Then we can find a vector subspace $W$ of $V$ over $\KK$ such that $f$ induces
the isomorphism $\rest{f}{W} : W \to W'$.
If we set $\widetilde{U} = \bigcup_{t \in \Ker(f)} ( U + t)$, then
$\widetilde{U}$ is open and $f(W \cap \widetilde{U}) = W' \cap f(U)$, as required.

Let $V'$ be a vector subspace of $V$ over $\KK$.
Then the induced topology of $V'$ from $V$coincides with the strong topology of $V'$.
Indeed, let $U'$ be an open set of $V'$ in the strong topology. We can easily construct a linear map
$f : V \to V'$ such that $V' \hookrightarrow V \overset{f}{\longrightarrow} V'$ is the identity map.
Thus $f^{-1}(U')$ is an open set in $V$, and hence $U' = \rest{f^{-1}(U')}{V'}$ is an open set in the induced topology.

\subsubsection{}
\label{CT:arith:subvariety}
A closed integral subscheme of an arithmetic variety is called an {\em arithmetic subvariety} if
it is flat over $\Spec(\ZZ)$.

\subsubsection{}
\label{CT:Pic:cont}
Let $X$ be an arithmetic variety.
We denote the group of isomorphism classes of continuous hermitian (resp. $C^{\infty}$-hermitian) invertible sheaves by
$\aPic(X;C^0)$ (resp. $\aPic(X;C^{\infty})$). $\aPic(X;C^{\infty})$ is often denoted by $\aPic(X)$ for simplicity.
An element of $\aPic_{\QQ}(X;C^0) := \aPic(X;C^0) \otimes_{\ZZ} \QQ$ (resp. $\aPic_{\QQ}(X;C^{\infty}) := \aPic(X;C^{\infty}) \otimes_{\ZZ} \QQ$)
is called a continuous hermitian (resp. $C^{\infty}$-hermitian) $\QQ$-invertible sheaf.

\subsubsection{}
\label{CT:Arakelov:positivity}
A $C^{\infty}$-hermitian invertible sheaf $\overline{A}$ on a projective arithmetic variety $X$
is said to be {\em ample} if
$A$ is ample on $X$, the first Chern form $c_1(\overline{A})$ is positive on $X(\CC)$ and,
for a sufficiently large integer $n$,
$H^0(X, nA)$ is generated by the set
\[
\{ s \in H^0(X, nA) \mid \Vert s \Vert_{\sup} < 1\}
\]
as a $\ZZ$-module.
Note that, for $\overline{A}, \overline{L} \in \aPic(X;C^{\infty})$,
if $\overline{A}$ is ample, then there is a positive integer $m$ such that
$m \overline{A} + \overline{L}$ is ample.

\subsubsection{}
\label{CT:cont:herm:sup:ball}
Let $\overline{L}$ be a continuous hermitian invertible sheaf on a projective arithmetic variety $X$.
Then $B_{\sup}(\overline{L})$ is defined to be
\[
B_{\sup}(\overline{L}) = \{ s \in H^0(X, L)_{\RR} \mid \Vert s \Vert_{\sup} \leq 1 \}.
\]
Note that $\aH(X, \overline{L}) = H^0(X, L) \cap B_{\sup}(\overline{L})$.

\subsubsection{}
\label{C:T:valuation:vector}
Let $A$ be a noetherian integral domain and $t \not\in A^{\times}$.
As $\bigcap_{n \geq 0} t^n A =  \{ 0 \}$,
for $a \in A \setminus \{ 0\}$,  we can define $\ord_{tA}(a)$
to be
\[
\ord_{tA}(a) = \max \{ n \in \ZZ_{\geq 0} \mid a \in t^n A \}.
\]
Let $\{ 0 \} = P_0 \subsetneq P_1 \subsetneq \cdots \subsetneq P_d$ be
a chain of prime ideals of $A$.
Let $A_i = A/P_i$ for $i=0, \ldots, d$, and
let $\rho_{i} : A_{i-1} \to A_{i}$ be natural homomorphisms as follows.
\[
A = A_0 \overset{\rho_1}{\longrightarrow} A_1 \overset{\rho_2}{\longrightarrow} \cdots
\overset{\rho_{d-1}}{\longrightarrow} A_{d-1} \overset{\rho_{d}}{\longrightarrow}  A_{d}.
\]
We assume that
$P_d$ is a maximal ideal, and that
$P_{i}A_{i-1} = \Ker(\rho_{i})$ is a principal ideal of $A_{i-1}$ for every $i=1, \ldots, d$, that is,
there is $t_{i} \in A_{i-1}$ with $P_{i}A_{i-1} = t_{i} A_{i-1}$.
For $a \not= 0$, the {\em valuation vector} $(\nu_1(a), \ldots, \nu_{d}(a))$ of $a$
is defined in the following way:
\[
a_1 := a\quad\text{and}\quad\nu_1(a) := \ord_{t_1 A_0} (a_1).
\]
If $a_1 \in A_0, a_2 \in A_1, \ldots, a_{i} \in A_{i-1}$ and $\nu_1(a), \ldots, \nu_{i}(a) \in \ZZ_{\geq 0}$ are given,
then 
\[
a_{i+1} := \rho_{i}(a_{i}t_i^{-\nu_{i}(a)})\quad\text{and}\quad
\nu_{i+1}(a) := \ord_{t_{i+1}A_{i}}(a_{i+1}).
\]
Note that the valuation vector $(\nu_1(a), \ldots, \nu_{d}(a))$ does not depend on the choice of
$t_1, \ldots, t_d$.

Let $X$ be a noetherian integral scheme and
\[
Y_{\cdot} : Y_0 = X \supset Y_1 \supset Y_2 \supset \cdots \supset Y_d
\]
a chain of integral subschemes of $X$.
We say $Y_{\cdot}$ a {\em flag} if
$Y_d$ consists of a closed point $y$ and
$Y_{i+1}$ is locally principal at $y$ in $Y_{i}$ for all $i =0, \ldots, d-1$.
Let $A = \OO_{X, y}$ and $P_i$ the defining prime ideal of $Y_i$ in $A$.
Then we have a chain $P_0 \subsetneq P_1 \subsetneq \cdots \subsetneq P_d$ of prime ideals as above, so that 
we obtain the valuation vector $(\nu_1(a), \ldots, \nu_d(a))$
for each $a \in A \setminus \{ 0\}$.
It is called the {\em valuation vector} attached to the flag
$Y_{\cdot}$ and is denoted by $\nu_{Y_{\cdot}}(a)$ or $\nu(a)$.
Let $L$ be an invertible sheaf on $X$ and
$\omega$ a local basis of $L$ at $y$. Then,
for each $s \in H^0(X, L)$, we can find $a_s \in A$ with $s = a_s \omega$.
Then $\nu_{Y_{\cdot}}(a_s)$ is denoted by $\nu_{Y_{\cdot}}(s)$.
Note that $\nu_{Y_{\cdot}}(s)$ does not depend on the choice of $\omega$.

\renewcommand{\theTheorem}{\arabic{section}.\arabic{subsection}.\arabic{Theorem}}
\renewcommand{\theClaim}{\arabic{section}.\arabic{subsection}.\arabic{Theorem}.\arabic{Claim}}
\renewcommand{\theequation}{\arabic{section}.\arabic{subsection}.\arabic{Theorem}.\arabic{Claim}}

\section{Preliminaries}
\setcounter{Theorem}{0}
\subsection{Open cones}
Let $\KK$ be either $\QQ$ or $\RR$, and let $V$ be a vector space over $\KK$.
A cone in $V$ is said to be {\em open} if it is an open set in $V$ in the strong topology
(see Conventions and terminology~\ref{CT:strong:topology}).

\begin{Proposition}
\label{prop:big:open}
Let $C$ be a cone in $V$. Then we have the following:
\begin{enumerate}
\renewcommand{\labelenumi}{\rom{(\arabic{enumi})}}
\item
$C$ is open if and only if, for any $a \in C$ and $x \in V$, there is $\delta_0 \in \KK_{>0}$ such that
$a + \delta_0 x \in C$.

\item
Let $f : V \to V'$ be a surjective homomorphism of vector spaces over $\KK$.
\begin{enumerate}
\renewcommand{\labelenumii}{\rom{(\arabic{enumi}.\arabic{enumii})}}
\item
If $C$ is open in $V$, then $f(C)$ is also open in $V'$.

\item
If $C + \Ker(f) \subseteq C$,
then $f^{-1}(f(C)) =  C$.
\end{enumerate}
\end{enumerate}
\end{Proposition}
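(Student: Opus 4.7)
The plan is to establish the three items in sequence, exploiting the fact that every cone in the sense of Conventions and terminology~\ref{CT:cone} is automatically convex: for $u,v \in C$ and $t \in (0,1)\cap\KK$, both $tu$ and $(1-t)v$ lie in $C$ by closure under positive scalar multiplication, hence so does their sum. For part~(1), the \emph{only if} direction will be immediate from the definition of the strong topology: if $C$ is open and $a \in C$, $x \in V$, then the finite-dimensional subspace $W := \KK a + \KK x$ intersects $C$ in an open neighborhood of $a$ in the usual topology on $W$, so $a + \delta_0 x \in C$ for all sufficiently small $\delta_0 > 0$.

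For the \emph{if} direction of part~(1), which I expect to be the main technical step, I will fix a finite-dimensional subspace $W$ with a basis $e_1,\ldots,e_n$ and a point $a \in C \cap W$, and exhibit an open neighborhood of $a$ in $W$ inside $C$. Applying the hypothesis to $a$ with each of $\pm e_i$ produces $\delta_i^\pm > 0$ with $a \pm \delta_i^\pm e_i \in C$, and convexity then propagates this along the segment joining these points to give $a + s e_i \in C$ for all $s \in [-\delta_i^-,\delta_i^+]\cap\KK$. The key trick is the averaging identity
\[
a + v = \sum_{i=1}^n \frac{1}{n}\bigl(a + n c_i e_i\bigr)
\]
for $v = \sum_i c_i e_i \in W$: when each $|c_i|$ is small enough that $n c_i$ lies in the interval above, every summand belongs to $C$ (using closure of $C$ under addition and positive scalars), and the desired neighborhood emerges.

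Part~(2.1) will then follow by applying the criterion from~(1) on both sides: given $b = f(a) \in f(C)$ and $y \in V'$, I will lift $y$ to some $x \in V$ using surjectivity of $f$, invoke~(1) to obtain $\delta_0 > 0$ with $a + \delta_0 x \in C$, and conclude $b + \delta_0 y = f(a + \delta_0 x) \in f(C)$, so that $f(C)$ satisfies the criterion of~(1) and is open. Part~(2.2) is purely set-theoretic: the inclusion $C \subseteq f^{-1}(f(C))$ is automatic, and for the reverse direction, any $x \in f^{-1}(f(C))$ satisfies $f(x) = f(a)$ for some $a \in C$, whence $x - a \in \Ker(f)$ and $x = a + (x-a) \in C + \Ker(f) \subseteq C$ by hypothesis. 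The whole argument hinges on the averaging step in~(1), which is what upgrades one-dimensional pointwise information into genuine openness in arbitrary finite-dimensional sections.
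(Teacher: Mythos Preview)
Your proof is correct and essentially follows the paper's own argument: the averaging decomposition $a + v = \sum_{i} \frac{1}{n}(a + n c_i e_i)$ is exactly the paper's key step (the paper phrases it as $a + x = \sum_i (a/n + x_i e_i)$, applying the hypothesis to $a/n$ rather than to $a$, which is the same thing after rescaling). The only small divergence is in~(2.1): the paper invokes the general fact from Conventions and terminology~\ref{CT:strong:topology} that surjective linear maps are open in the strong topology, whereas you re-verify the criterion of~(1) directly by lifting $y$ through $f$; your route is more self-contained but otherwise equivalent.
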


\begin{proof}
(1) If $C$ is open, then the condition in (1) is obviously satisfied.
Conversely we assume that, for any $a \in C$ and $x \in V$, there is $\delta_0 \in \KK_{>0}$ such that
$a + \delta_0 x \in C$. First let see the following claim:

\begin{Claim}
For any $a \in C$ and $x \in V$, there is $\delta_0 \in \KK_{>0}$ such that
$a + \delta x \in C$ holds for all $\delta \in \KK$ with $\vert \delta \vert \leq \delta_0$.
\end{Claim}

By our assumption, there are $\delta_1,\delta_2 \in \KK_{>0}$ such that
$a + \delta_1 x, a + \delta_2 (-x) \in C$.
For $\delta \in \KK$ with $-\delta_2 \leq \delta \leq \delta_1$, if we set
$\lambda = (\delta + \delta_1)/(\delta_1 + \delta_2)$, then
$0 \leq \lambda \leq 1$ and $\delta = \lambda \delta_1 + (1-\lambda)(-\delta_2)$.
Thus
\[
\lambda(b + \delta_1x) + (1-\lambda) (b + \delta_2(-x)) = b + \delta x \in C.
\]
Therefore, if we put $\delta_0 = \min \{ \delta_1, \delta_2 \}$, then the assertion of the claim follows.
\CQED

\medskip
Let $W$ be a finite dimensional vector subspace of $V$ over $\KK$ and
$a \in W \cap C$.
Let $e_1, \ldots, e_n$ be a basis of $W$.
Then, by the above claim, there is $\delta_0 \in \KK_{>0}$ such that
$a/n + \delta e_i \in C$ holds for all $i$ and all $\delta \in \KK$ with $\vert \delta \vert \leq \delta_0$.
We set 
\[
U = \{ x_1 e_1 + \cdots + x_n e_n \mid \vert x_1 \vert < \delta_0, \ldots, \vert x_n \vert < \delta_0 \}.
\]
It is sufficient to see that $a + U \subseteq C$. Indeed, if $x = x_1 e_1 + \cdots + x_n e_n \in U$,
then %$a/n + x_i e_i \in C$ for all $i$. Thus
\[
a + x = \sum_{i=1}^n (a/n + x_i e_i) \in C.
\]

\medskip
(2) (2.1) follows from the fact that $f$ is an open map (cf. Conventions and terminology~\ref{CT:strong:topology}).
Clearly $f^{-1}(f(C)) \supseteq  C$. Conversely let $x \in f^{-1}(f(C))$.
Then there are $a \in C$ with $f(x) = f(a)$.
Thus we can find $u \in \Ker(f)$ such that $x - a  = u$ because $f(x - a) = 0$.
Hence
\[
x = a + u \in C + \Ker(f) \subseteq C.
\]
\end{proof}

To proceed with further arguments, we need the following two lemmas.

\begin{Lemma}
\label{lem:cone:sum}
Let $S$ and $T$ be subsets of $V$.
Then 
\[
\Cone_{\KK}(S + T) \subseteq \Cone_{\KK}(S) + \Cone_{\KK}(T),
\]
where $S + T = \{ s + t \mid s \in S, \ t \in T\}$.
Moreover, if $a t \in T$ holds for all $t \in T$ and
$a \in \ZZ_{\geq 0}$, then $\Cone_{\KK}(S + T) = \Cone_{\KK}(S) + \Cone_{\KK}(T)$.
\end{Lemma}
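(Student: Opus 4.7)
The first inclusion will follow immediately from the definition: any $z \in \Cone_{\KK}(S+T)$ can be written as $z = \sum_{i=1}^r \lambda_i(s_i + t_i)$ with $\lambda_i \in \KK_{>0}$, $s_i \in S$, $t_i \in T$, and regrouping gives $z = \sum_i \lambda_i s_i + \sum_i \lambda_i t_i \in \Cone_{\KK}(S) + \Cone_{\KK}(T)$.

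For the reverse inclusion under the hypothesis $\ZZ_{\geq 0} \cdot T \subseteq T$, my plan proceeds as follows. Taking $a = 0$ in the hypothesis forces $0 \in T$ whenever $T$ is non-empty (the case $T = \emptyset$ being trivial, since both sides of the claimed equality are then empty), so in particular $S \subseteq S+T$. The key observation I will exploit is a rescaling trick: for any $s \in S$, $t \in T$, positive integer $n$, and $\mu \in \KK_{>0}$, the hypothesis gives $nt \in T$, whence $s + nt \in S+T$, and therefore
\[
(\mu/n)(s + nt) = (\mu/n)s + \mu t \in \Cone_{\KK}(S+T).
\]
This produces the ``pure'' term $\mu t$ at the cost of an extra $s$-contribution that can be made as small as we please by taking $n$ large.

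Given $x = \sum_{i=1}^r \lambda_i s_i \in \Cone_{\KK}(S)$ and $y = \sum_{j=1}^q \mu_j t_j \in \Cone_{\KK}(T)$ (with $r, q \geq 1$ and all coefficients in $\KK_{>0}$), I will fix the anchor $s_1 \in S$ appearing in $x$ and choose positive integers $n_j$ with $\delta := \sum_{j=1}^q \mu_j/n_j < \lambda_1$. Then the identity
\[
x + y = (\lambda_1 - \delta)(s_1 + 0) + \sum_{i=2}^r \lambda_i(s_i + 0) + \sum_{j=1}^q (\mu_j/n_j)(s_1 + n_j t_j)
\]
exhibits $x+y$ as a $\KK_{>0}$-combination of elements of $S+T$, giving $x+y \in \Cone_{\KK}(S+T)$. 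The one genuinely substantive step is the idea of discharging the excess $s_1$-contribution produced by the rescaling against the coefficient $\lambda_1$ already present in $x$; once the choice of $n_j$ is made, the strict positivity of every coefficient in the above identity is immediate and the argument closes.
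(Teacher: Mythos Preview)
Your proof is correct and follows essentially the same approach as the paper's: both use $0 \in T$ to write $s_i = s_i + 0 \in S+T$, and both rescale each $t_j$ by a positive integer so that the extra $s_1$-contributions can be absorbed into the coefficient $\lambda_1$. The only cosmetic difference is that the paper uses a single integer $N$ with $N\lambda_1 > \sum_j \mu_j$ for all terms, whereas you allow separate $n_j$'s with $\sum_j \mu_j/n_j < \lambda_1$; taking all $n_j = N$ recovers the paper's identity exactly.
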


\begin{proof}
The first assertion is obvious.
Let $x \in \Cone_{\KK}(S) + \Cone_{\KK}(T)$.
Then there are $s_1, \ldots, s_r \in S$, $t_1, \ldots, t_{r'} \in T$,
$\lambda_1, \ldots, \lambda_r \in \KK_{> 0}$ and
$\mu_1, \ldots, \mu_{r'} \in \KK_{>0}$ such that
$x = \lambda_1 s_1 + \cdots + \lambda_r s_{r} + \mu_1 t_1 + \cdots + \mu_{r'} t_{r'}$.
We choose a positive integer $N$ with $N \lambda_1 > \mu_1 + \cdots + \mu_{r'}$. Then
\begin{multline*}
x = \left( \lambda_1 - \frac{\mu_1 + \cdots + \mu_{r'}}{N} \right) (s_1 + 0)
+ \lambda_2 (s_2 + 0) + \cdots + \lambda_r (s_r + 0) \\
+
(\mu_1/N) (s_1 + N t_1) + \cdots + (\mu_{r'}/N) (s_1 + N t_{r'}) \in \Cone_{\KK}(S+T)
\end{multline*}
because $0, N t_{1}, \ldots, N t_{r'} \in T$.
\end{proof}

\begin{Lemma}
\label{lem:conv:RR:QQ}
Let $P$ be a vector space over $\QQ$, $x_1, \ldots, x_r \in P$,
$b_1, \ldots, b_m \in \QQ$ and $A$ a $(r \times m)$-matrix whose entries belong to $\QQ$.
Let $\lambda_1, \ldots, \lambda_r \in \RR_{\geq 0}$ with
$(\lambda_1, \ldots, \lambda_r) A =
(b_1, \ldots, b_m)$.
If  $x : = \lambda_1 x_1 + \cdots + \lambda_r x_r \in P$, then
there are $\lambda'_1, \ldots, \lambda'_r \in \QQ_{\geq 0}$ such that
$x = \lambda'_1 x_1 + \cdots + \lambda'_r x_r$ and
$(\lambda'_1, \ldots, \lambda'_r) A =
(b_1, \ldots, b_m)$.
Moreover, if $\lambda_i$'s are positive, then we can choose positive $\lambda'_i$'s.
\end{Lemma}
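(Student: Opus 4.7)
The plan is to recast all the constraints on $\lambda = (\lambda_1, \ldots, \lambda_r)$ as a single system of $\QQ$-linear equations and then exploit the fact that the rational solutions of a rational affine system are dense in the real ones.

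First, I would replace $P$ by the finite-dimensional $\QQ$-subspace $P_0 \subseteq P$ spanned by $x_1, \ldots, x_r, x$, choose a $\QQ$-basis $e_1, \ldots, e_N$ of $P_0$, and write $x_j = \sum_k c_{jk} e_k$ and $x = \sum_k c_k e_k$ with all $c_{jk}, c_k \in \QQ$. The equation $x = \sum_j \lambda_j x_j$, understood in $P_0 \otimes_\QQ \RR$, then unfolds to the rational scalar conditions $\sum_j c_{jk} \lambda_j = c_k$ for $k = 1, \ldots, N$. Coupled with $(\lambda_1, \ldots, \lambda_r) A = (b_1, \ldots, b_m)$, this yields a single rational linear system $M\lambda = c$; denote its real solution set by $L \subseteq \RR^r$, so by hypothesis the given $\lambda$ lies in $L$.

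Next, Gaussian elimination carried out entirely over $\QQ$ shows that $L$ is nonempty already over $\QQ$, and that for any $\lambda_0 \in L \cap \QQ^r$ one has $L = \lambda_0 + W$, where $W \subseteq \RR^r$ admits a $\QQ$-basis (namely, a $\QQ$-basis of the kernel of $M$ acting on $\QQ^r$). Consequently $L \cap \QQ^r$ is dense in $L$ in the Euclidean topology.

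Finally, to handle the sign constraints I would set $I = \{ i \mid \lambda_i = 0 \}$ and enlarge the system by adjoining the rational equations $\mu_i = 0$ for $i \in I$. Its real solution set $L_I \subseteq L$ contains $\lambda$, so the same density argument applied to $L_I$ produces $\lambda' \in L_I \cap \QQ^r$ arbitrarily close to $\lambda$. If I choose it close enough, then $\lambda'_j > 0$ for each $j \notin I$ (openness of the condition $\mu_j > 0$, since $\lambda_j > 0$), while $\lambda'_i = 0$ for $i \in I$ by construction; hence $\lambda' \in \QQ_{\geq 0}^r$ has all the required properties. The ``moreover'' clause is the special case $I = \emptyset$, where closeness alone forces every $\lambda'_i$ to be positive. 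I do not anticipate any genuine obstacle: the only step that warrants care is the first, where the vector identity $\sum \lambda_j x_j = x$ must be correctly translated into $\QQ$-rational scalar equations by means of the basis $\{e_k\}$.
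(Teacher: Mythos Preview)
Your proposal is correct and follows essentially the same route as the paper: pass to a finite-dimensional $\QQ$-subspace via a basis, combine the vector identity and the matrix constraint into a single rational affine system, observe that its rational solutions are dense in its real solutions, and use openness of strict positivity. The only cosmetic difference is that the paper deletes the indices with $\lambda_i = 0$ at the outset and then proves the strictly positive case, whereas you keep all coordinates and instead adjoin the equations $\mu_i = 0$ for $i \in I$; both devices accomplish the same reduction.
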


\begin{proof}
If $\lambda_i = 0$, then 
\[
x = \sum_{j \not=i} \lambda_j x_j\quad\text{and}\quad
(\lambda_1, \ldots, \lambda_{i-1}, \lambda_{i+1}, \ldots, \lambda_r) A' =
(b_1, \ldots, b_m),
\]
where $A'$ is the $(r-1)\times n$-matrix obtained by deleting the $i$-th row from $A$.
Thus we may assume that $\lambda_i > 0$ for all $i$.
Let $e_1, \ldots, e_n$ be a basis of $\langle x_1, \ldots, x_r, x \rangle_{\QQ}$.
We set $x_i = \sum_{j} c_{ij} e_j$ and $x = \sum_{j} d_j e_j$ ($c_{ij} \in \QQ$, $d_j \in \QQ$).
Then $d_j = \sum_{i} \lambda_i c_{ij}$. Let $C = (c_{ij})$ and we consider linear maps
$f_{\QQ} : \QQ^r \to \QQ^{m+n}$ and $f_{\RR} : \RR^r \to \RR^{m+n}$ given by
\[
f_{\QQ}(s_1, \ldots, s_r) = (s_1, \ldots, s_r)(A, C)\quad\text{and}\quad
f_{\RR}(t_1, \ldots, t_r) = (t_1, \ldots, t_r)(A, C).
\]
Then $f_{\RR}(\lambda_1, \ldots, \lambda_r) = (b_1, \ldots, b_m, d_1, \ldots, d_n)$, that is,
\[
(b_1, \ldots, b_m, d_1, \ldots, d_n) \in f_{\RR}(\RR^r) \cap \QQ^{m+n}.
\]
Note that  $f_{\RR}(\RR^r) \cap \QQ^{m+n} = f_{\QQ}(\QQ^r)$ because
\[
\QQ^{m+n}/f_{\QQ}(\QQ^r) \to (\QQ^{m+n}/f_{\QQ}(\QQ^r)) \otimes_{\QQ} \RR
\]
is injective and
\[
 (\QQ^{m+n}/f_{\QQ}(\QQ^r)) \otimes_{\QQ} \RR
= (\QQ^{m+n} \otimes_{\QQ} \RR)/(f_{\QQ}(\QQ^r) \otimes_{\QQ} \RR) =
\RR^{m+n}/f_{\RR}(\RR^r).
\]
Therefore there is $(e_1, \ldots, e_r) \in \QQ^r$ with $f_{\QQ}(e_1, \ldots, e_r) = (b_1, \ldots, b_m, d_1, \ldots, d_n)$, and hence
\[
\begin{cases}
f_{\QQ}^{-1}(b_1, \ldots, b_m, d_1, \ldots, d_n) = f_{\QQ}^{-1}(0) + (e_1, \ldots, e_r), \\
f_{\RR}^{-1}(b_1, \ldots, b_m, d_1, \ldots, d_n) = f_{\RR}^{-1}(0) + (e_1, \ldots, e_r).
\end{cases}
\]
In particular, $f_{\QQ}^{-1}(b_1, \ldots, b_m, d_1, \ldots, d_n)$ is dense in $f_{\RR}^{-1}(b_1, \ldots, b_m, d_1, \ldots, d_n)$.
Thus, as  $(\lambda_1, \ldots, \lambda_r) \in f_{\RR}^{-1}(b_1, \ldots, b_m, d_1, \ldots, d_n) \cap \RR_{>0}^r$, we have
\[
f_{\QQ}^{-1}(b_1, \ldots, b_m, d_1, \ldots, d_n) \cap \RR^r_{>0} \not= \emptyset,
\]
that is,
we can find $(\lambda'_1, \ldots, \lambda'_r) \in \QQ_{>0}^r$ with $f_{\QQ}(\lambda'_1, \ldots, \lambda'_r) = (b_1, \ldots, b_m, d_1, \ldots, d_n)$.
Hence 
\[
x = \lambda'_1 x_1 + \cdots + \lambda'_r x_r \quad\text{and}\quad
(\lambda'_1, \ldots, \lambda'_r) A =
(b_1, \ldots, b_m).
\]
\end{proof}

Next we consider the following proposition.

\begin{Proposition}
\label{prop:A:V:open}
Let $P$ be a vector space over $\QQ$ and $V = P \otimes_{\QQ} \RR$.
Let $C$ be a cone in $P$.
Then we have the following:
\begin{enumerate}
\renewcommand{\labelenumi}{\rom{(\arabic{enumi})}}
\item
$\Cone_{\RR}(C) \cap P = C$.

\item
If $C$ is open, then $\Cone_{\RR}(C)$ is also open.

\item
If $D$ is a cone in $P$ with $0 \in D$, then
$\Cone_{\RR}(C+D) = \Cone_{\RR}(C) + \Cone_{\RR}(D)$.
\end{enumerate}
\end{Proposition}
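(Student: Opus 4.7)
The plan is to handle the three parts in order, with (2) being the main technical point; parts (1) and (3) follow quickly from the preceding lemmas.

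For (1), the inclusion $C \subseteq \Cone_{\RR}(C) \cap P$ is immediate. For the reverse inclusion, I would take $x \in \Cone_{\RR}(C) \cap P$ and write $x = \lambda_1 c_1 + \cdots + \lambda_r c_r$ with $c_i \in C$ and $\lambda_i \in \RR_{>0}$. Applying Lemma~\ref{lem:conv:RR:QQ} with no extra matrix constraints (so the only relation in play is $\sum \lambda_i c_i = x \in P$) produces positive rationals $\lambda'_i$ with $x = \sum \lambda'_i c_i$. Clearing a common denominator and repeatedly using the cone axioms for $C$ (closure under $\QQ_{>0}$-scaling and under addition) places $x$ in $C$.

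For (2), I would use the characterization in Proposition~\ref{prop:big:open}(1): it suffices to show that for any $a \in \Cone_{\RR}(C)$ and $v \in V$ there exists $\delta_0 > 0$ with $a + \delta_0 v \in \Cone_{\RR}(C)$. The technical heart is the sub-claim that for each $c \in C$ and each $p \in P$ there exists a real $\delta > 0$ such that $c + \tau p \in \Cone_{\RR}(C)$ for every real $\tau$ with $|\tau| < \delta$. Openness of $C$ in $P$ (together with Proposition~\ref{prop:big:open}(1) applied inside $P$) gives a rational $\delta_1 > 0$ with $c + \rho p \in C$ for every rational $\rho$ with $|\rho| \leq \delta_1$; for a real $\tau$ with $|\tau| < \delta_1$, I would bracket $\tau$ between rationals $\alpha < \tau < \beta$ in $[-\delta_1, \delta_1]$ and write $\tau = \gamma \alpha + (1-\gamma)\beta$ with $\gamma \in (0,1)$, so that
\[
c + \tau p \;=\; \gamma (c + \alpha p) + (1-\gamma)(c + \beta p) \;\in\; \Cone_{\RR}(C).
\]
Given the sub-claim, write $a = \sum_{i=1}^{r} \mu_i c_i$ and $v = \sum_{j=1}^{s} t_j p_j$ with $p_j \in P$, $t_j \in \RR$, and use the identity
\[
c_1 + \eta v \;=\; \tfrac{1}{2}\, c_1 + \sum_{j=1}^{s} \tfrac{1}{2s}\bigl(c_1 + 2s\, \eta\, t_j\, p_j\bigr),
\]
choosing $\eta > 0$ small enough that each $c_1 + 2s \eta t_j p_j$ lies in $\Cone_{\RR}(C)$ by the sub-claim. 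Then $a + (\mu_1 \eta) v = \mu_1 (c_1 + \eta v) + \sum_{i \geq 2} \mu_i c_i$ is a positive $\RR$-combination of elements of $\Cone_{\RR}(C)$, so $\delta_0 := \mu_1 \eta$ works.

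For (3), I would invoke Lemma~\ref{lem:cone:sum} directly with $\KK = \RR$, $S = C$, $T = D$; since $D$ is a cone containing $0$, we have $a D \subseteq D$ for every $a \in \ZZ_{\geq 0}$, which is precisely the hypothesis of that lemma. The main obstacle throughout is the sub-claim in (2): one must realize real perturbations of points of $C$ inside $\Cone_{\RR}(C)$, even though $C$ itself only sees $\QQ$-perturbations; the bracketing-by-rationals convex-combination trick is the clean way to bridge this $\QQ$/$\RR$ gap.
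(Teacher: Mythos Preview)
Your proposal is correct and follows essentially the same route as the paper: part~(1) via Lemma~\ref{lem:conv:RR:QQ}, part~(3) via Lemma~\ref{lem:cone:sum}, and part~(2) via Proposition~\ref{prop:big:open}(1) together with the sub-claim that rational perturbations of $c\in C$ extend to real perturbations inside $\Cone_{\RR}(C)$ by a convex-combination argument. The only cosmetic differences are in the bookkeeping of~(2): the paper interpolates between the two endpoints $c\pm\delta_0 p$ rather than arbitrary rational brackets, and splits off a rational piece $\lambda<\lambda_1$ from the first coefficient instead of your $\tfrac{1}{2}c_1$ decomposition, but the underlying idea is identical.
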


\begin{proof}
(1) Clearly $C \subseteq \Cone_{\RR}(C) \cap P$.
We assume that $x \in \Cone_{\RR}(C) \cap P$.
Then, by Lemma~\ref{lem:conv:RR:QQ},
there are $\omega_1, \ldots, \omega_r \in C$ and $\lambda_1, \ldots, \lambda_r \in \QQ_{>0}$ with
$x = \lambda_1 \omega_1 + \cdots + \lambda_r \omega_r$,
which means that $x = \lambda_1 \omega_1 + \cdots + \lambda_r \omega_r \in C$.

\medskip
(2) First let us see the following:
for $a \in C$ and $x \in P$, there is $\delta_0 \in \QQ_{>0}$ such that
$a + \delta x \in \Cone_{\RR}(C)$ for all $\delta \in \RR$ with $\vert \delta \vert \leq \delta_0$.
Indeed, by our assumption, there is $\delta_0 \in \QQ_{>0}$ such that $a \pm \delta_0 x \in C$.
For $\delta \in \RR$ with $\vert \delta \vert \leq \delta_0$, if we set
$\lambda = (\delta + \delta_0)/2\delta_0$, then $0 \leq \lambda \leq 1$ and $\delta = \lambda\delta_0 + (1-\lambda)(-\delta_0)$.
Thus $b + \delta x = \lambda (b + \delta_0 x) + (1-\lambda)(b -\delta_0 x) \in \Cone_{\RR}(C)$.

By (1) in Proposition~\ref{prop:big:open}, it is sufficient to see that, for $a' \in \Cone_{\RR}(C)$ and $x' \in V$, there is a positive $\delta' \in \RR_{> 0}$
with $a' + \delta' x' \in \Cone_{\RR}(C)$.
We set $a' = \lambda_1 a_1 + \cdots + \lambda_r a_r$ ($a_1, \ldots, a_r \in C$, $\lambda_1, \ldots, \lambda_r \in \RR_{>0}$) and
$x' = \mu_1 x_1 + \cdots + \mu_n x_n$ ($x_1, \ldots, x_n \in P$, $\mu_1, \ldots, \mu_n \in \RR$).
We choose $\lambda \in \QQ$ such that $0 < \lambda < \lambda_{1}$.
By the above claim,
there is  $\delta_0 \in \QQ_{>0}$ such that
$(\lambda/n)a_1 + \delta x_j \in \Cone_{\RR}(C)$ for all $j$ and all $\delta \in \RR$ with $\vert \delta \vert \leq \delta_0$. 
We choose $\delta' \in \RR_{>0}$ such that $\vert \delta' \mu_j \vert \leq \delta_0$ for all $j$.
Then
\[
a' + \delta' x = (\lambda_1 - \lambda)a_1 + \sum_{i\geq 2} \lambda_i a_i + \sum_{j=1}^n ((\lambda/n) a_1 + \delta' \mu_j x_j) \in \Cone_{\RR}(C),
\]
as required.

(3) follows from Lemma~\ref{lem:cone:sum}.
\end{proof}

Let $M$ be a $\ZZ$-module and $A$ a sub-semigroup of $M$.
$A$ is said to be {\em open} if,
for any $a \in A$ and $x \in M$, there is a positive integer $n$ such that $n a + x \in A$.
For example, let $X$ be a projective arithmetic variety and $\aAmp(X)$ the sub-semigroup of $\aPic(X;C^{\infty})$
consisting of ample $C^{\infty}$-hermitian invertible sheaves on $X$.
Then $\aAmp(X)$ is open as a sub-semigroup of $\aPic(X;C^{\infty})$ (cf. Conventions and terminology~\ref{CT:Arakelov:positivity}).

\begin{Proposition}
\label{prop:QSat}
Let $\iota : M \to M \otimes_{\ZZ} \QQ$ be the natural homomorphism, and
let $A$ be sub-semigroups of $M$.
Then we have the following.
\begin{enumerate}
\renewcommand{\labelenumi}{\rom{(\arabic{enumi})}}
\item
$\Cone_{\QQ}(\iota(A)) = \{ (1/n)\iota(a) \mid n \in \ZZ_{>0}, \ a \in A \}$.

\item $\QSat(A) = \iota^{-1}(\Cone_{\QQ}(\iota(A))$ 
\rom{(}see Conventions and terminology~\rom{\ref{CT:sub:semigroup:monoid}} for the saturation $\QSat(A)$ of $A$ in $M$\rom{)}.

\item
If $A$ is open, then $\Cone_{\QQ}(\iota(A))$ is an open set in $M \otimes_{\ZZ} \QQ$.

\item
If $B$ is a sub-monoid of $M$, then
\[
\Cone_{\QQ}(\iota(A + B)) = \Cone_{\QQ}(\iota(A)) + \Cone_{\QQ}(\iota(B)).
\]

\item
Let $f : A \to \RR$ be a function on $A$. If  there is a positive real number $e$ such that
$f(n a) = n^e f(a)$ for all $n \in \ZZ_{>0}$ and $a \in A$, then there is a unique function
$\tilde{f} : \Cone_{\QQ}(\iota(A)) \to \RR$ with the following properties:
\begin{enumerate}
\renewcommand{\labelenumii}{\rom{(\arabic{enumi}.\arabic{enumii})}}
\item
$\tilde{f} \circ \iota = f$.

\item
$\tilde{f}(\lambda x ) = \lambda^e f(x)$ for all $\lambda \in \QQ_{>0}$ and $x \in \Cone_{\QQ}(\iota(A))$.
\end{enumerate}
\end{enumerate}
\end{Proposition}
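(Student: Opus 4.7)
The plan is to handle parts (1)--(4) with fairly direct common-denominator manipulations and invocations of the lemmas already developed, then to devote the main effort to (5), where well-definedness is the real issue.

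For (1), any element of $\Cone_{\QQ}(\iota(A))$ is $\sum_{i=1}^r \lambda_i \iota(a_i)$ with $a_i \in A$ and $\lambda_i \in \QQ_{>0}$; after writing $\lambda_i = p_i/N$ with $p_i \in \ZZ_{>0}$ and a common denominator $N$, the sum equals $(1/N) \iota(p_1 a_1 + \cdots + p_r a_r)$, and the argument of $\iota$ lies in $A$ since $A$ is a sub-semigroup. The reverse inclusion is trivial. For (2), if $n x \in A$ then $\iota(x) = (1/n) \iota(n x) \in \Cone_{\QQ}(\iota(A))$ by (1). Conversely, if $\iota(x) = (1/n)\iota(a)$ with $a \in A$, then $n x - a$ lies in the torsion kernel of $\iota$, so $m(n x - a) = 0$ for some $m \in \ZZ_{>0}$; this gives $(m n) x = m a \in A$, so $x \in \QSat(A)$.

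For (3), apply the criterion in Proposition~\ref{prop:big:open}(1): given $\alpha \in \Cone_{\QQ}(\iota(A))$ and $y \in M \otimes_{\ZZ} \QQ$, use (1) to write $\alpha = \iota(a)/N$, and write $y = \iota(x)/M$. Since $M \geq 1$ we have $Ma \in A$, so openness of $A$ supplies a $k \in \ZZ_{>0}$ with $k M a + x \in A$; a direct calculation gives $\alpha + (1/(kN)) y = \iota(k M a + x)/(k N M)$, which belongs to $\Cone_{\QQ}(\iota(A))$ by (1). For (4), the hypothesis that $B$ is a sub-monoid ensures $n \iota(b) = \iota(n b) \in \iota(B)$ for every $n \in \ZZ_{\geq 0}$ and $b \in B$, so Lemma~\ref{lem:cone:sum} applies to $S = \iota(A)$, $T = \iota(B)$ and yields the stated equality, using $\iota(A) + \iota(B) = \iota(A+B)$.

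The heart of (5) is well-definedness. Uniqueness is forced: by (1), every $x \in \Cone_{\QQ}(\iota(A))$ equals $(1/n)\iota(a)$ for some $a \in A$, $n \in \ZZ_{>0}$, and then (5.1)--(5.2) dictate $\tilde{f}(x) = (1/n)^e f(a)$. For existence, suppose $(1/n) \iota(a) = (1/n') \iota(a')$; then $n' a - n a'$ is torsion, so $m n' a = m n a'$ in $M$ for some $m \in \ZZ_{>0}$. Both sides lie in $A$, and the homogeneity hypothesis yields $(m n')^e f(a) = f(m n' a) = f(m n a') = (m n)^e f(a')$, whence $(1/n)^e f(a) = (1/n')^e f(a')$. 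The properties (5.1) and (5.2) then follow by direct computation, again via (1) and the homogeneity of $f$. The recurring subtlety throughout the proposition is that $\iota$ has a (torsion) kernel, so equalities in $M \otimes_{\ZZ} \QQ$ only lift to equalities in $M$ after multiplying by a suitable positive integer; this is the single genuine technical obstacle, appearing in both (2) and (5).
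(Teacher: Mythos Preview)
Your proof is correct and follows essentially the same approach as the paper's own proof: common-denominator reduction for (1), the torsion-kernel argument for (2) and (5), the criterion of Proposition~\ref{prop:big:open}(1) for (3), and Lemma~\ref{lem:cone:sum} for (4). One small notational slip in (3): you use $M$ both for the ambient $\ZZ$-module and for the denominator of $y$; renaming the latter (and noting that the detour through $Ma$ is actually unnecessary---openness applied directly to $a$ and $x$ already suffices, as in the paper) would clean this up.
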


\begin{proof}
(1) Let $x \in \Cone_{\QQ}(\iota(A))$. Then there are positive integers $n, m_1, \ldots, m_r$ and
$a_1, \ldots, a_r \in A$ such that $x = (m_1/n) \iota(a_1) + \cdots + (m_r/n) \iota(a_r)$.
Thus, if we set $a = m_1 a_1 + \cdots + m_r a_r \in A$,
then $x = (1/n) \iota(a)$. The converse is obvious.

(2) Clearly $\iota^{-1}(\Cone_{\QQ}(\iota(A))$ is saturated, and hence 
\[
\QSat(A) \subseteq \iota^{-1}(\Cone_{\QQ}(\iota(A)).
\]
Conversely we assume that $x \in \iota^{-1}(\Cone_{\QQ}(\iota(A))$.
Then, by (1), there are $n \in \ZZ_{>0}$ and $a \in A$ such that $\iota(x) = (1/n) \iota(a)$.
Thus, as $\iota(nx - a) = 0$, 
there is $n' \in \ZZ_{>0}$ such that $n'(n x - a) = 0$,
which means that $n'n x \in A$, as required.

(3) By (1) in Proposition~\ref{prop:big:open},
it is sufficient to show that, for any $a' \in \Cone_{\QQ}(\iota(A))$ and $x' \in M \otimes \QQ$,
there is $\delta \in \QQ_{>0}$ such that $a' + \delta x' \in \Cone_{\QQ}(\iota(A))$.
We can choose $a \in A$, $x \in M$ and positive integers $n_1$ and $n_2$ such that $a' = (1/n_1)\iota(a)$ and
$x' = (1/n_2)\iota(x)$.
By our assumption, there is a positive integer $n$ such that $n a + x \in A$.
Thus 
\[
n n_1 a' + n_2 x' = \iota(na + x) \in \iota(A),
\]
which yields $a' + (n_2/nn_1) x' \in \Cone_{\QQ}(\iota(A))$.

(4)
By virtue of Lemma~\ref{lem:cone:sum},
\[
\Cone_{\RR}(\iota(A + B)) =  \Cone_{\RR}(\iota(A)+ \iota(B)) = \Cone_{\RR}(\iota(A))+ \Cone_{\RR}(\iota(B)).
\]

(5)
First let us see the uniqueness of $\tilde{f}$.
Indeed, if it exists, then
\[
\tilde{f}((1/n) \iota(a)) = (1/n)^e \tilde{f}(\iota(a)) = (1/n)^e f(a).
\]
By the above observation,
in order to define $\tilde{f} :  \Conv_{\QQ}(\iota(A)) \to \RR$,
it is sufficient to show that if $(1/n) \iota(a) = (1/n')\iota(a')$ ($n, n' \in \ZZ_{>0}$ and $a, a' \in A$), then
$(1/n)^e f(a) = (1/n')^e f(a')$.
As $\iota(n'a - na') = 0$, there is $m \in \ZZ_{>0}$ such that $mn'a = mna'$.
Thus
\[
(mn')^e f(a) = f((mn')a) = f((mn)a') = (mn)^e f(a),
\]
which implies that $(1/n)^e f(a) = (1/n')^e f(a')$.
Finally let us see (5.2). We choose positive integers $n, n_1, n_2$ and $a \in A$ such that
$\lambda = n_1/n_2$ and $x = (1/n) \iota(a)$.
Then
\begin{align*}
\tilde{f}(\lambda x) & = \tilde{f}((1/n_2n)\iota(n_1a)) = (1/n_2n)^e f(n_1a) = (1/n_2n)^e n_1 ^e f(a) \\
& = \lambda^e (1/n)^e f(a) =
\lambda^e \tilde{f}(x).
\end{align*}
\end{proof}

\setcounter{Theorem}{0}
\subsection{Convex lattice}
\label{subsec:convex:lattice}
Let $M$ be a finitely generated free $\ZZ$-module.
Let $K$ be a subset of $M$.
The $\ZZ$-submodule generated by $K$ in $M$ and
the convex hull of $K$ in $M_{\RR} := M \otimes_{\ZZ} \RR$ are denoted by
$\langle K \rangle_{\ZZ}$ and $\Conv_{\RR}(K)$ respectively.
For a positive integer $m$,
the $m$-fold sum $m \ast K$ of elements in $K$ is defined to be
\[
m \ast K = \{ x_1 + \cdots + x_m \mid x_1, \ldots, x_m \in K \}.
\]
We say $K$ is a {\em convex lattice} if 
\[
 \langle K \rangle_{\ZZ} \cap \frac{1}{m}(m \ast K)
\subseteq K,\quad\text{that is}, \quad
m \langle K \rangle_{\ZZ} \cap (m \ast K)
\subseteq m K
\]
holds for all $m \geq 1$.
Moreover,  $K$ is said to be {\em symmetric} if $-x \in K$ for all $x \in K$.
Note that if $K$ is symmetric, then $\Conv_{\RR}(K)$ is also symmetric.

\begin{Proposition}
Let $K$ be a subset of $M$. Then we have the following:
\begin{enumerate}
\renewcommand{\labelenumi}{\rom{(\arabic{enumi})}}
\item
${\displaystyle  \langle K \rangle_{\ZZ} \cap \bigcup_{m=1}^{\infty} \frac{1}{m}(m \ast K) = \langle K \rangle_{\ZZ} \cap \Conv_{\RR}(K)}$.

\item
The following are equivalent:
\begin{enumerate}
\renewcommand{\labelenumii}{\rom{(\arabic{enumi}.\arabic{enumii})}}
\item $K$ is a convex lattice.

\item $K =  \langle K \rangle_{\ZZ} \cap \Conv_{\RR}(K)$.

\item There are a $\ZZ$-submodule $N$ of $M$ and a convex set $\Delta$ in $M_{\RR}$ such that
$K = N \cap \Delta$.
\end{enumerate}
\end{enumerate}
\end{Proposition}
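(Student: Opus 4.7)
The plan is to prove (1) first, then derive (2) from it almost formally.

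For (1), the inclusion $\subseteq$ is immediate: any element of the form $(x_1 + \cdots + x_m)/m$ with $x_i \in K$ is the convex combination $\sum_i (1/m) x_i$, hence lies in $\Conv_{\RR}(K)$. For the reverse inclusion, take $x \in \langle K \rangle_{\ZZ} \cap \Conv_{\RR}(K)$. Then $x = t_1 x_1 + \cdots + t_r x_r$ for some $x_1, \ldots, x_r \in K$ and $t_1, \ldots, t_r \in \RR_{\geq 0}$ with $t_1 + \cdots + t_r = 1$. The key step is to replace the $t_i$ by rationals. I would apply Lemma~\ref{lem:conv:RR:QQ} with $P = M \otimes_{\ZZ} \QQ$, with $x_1, \ldots, x_r$ viewed in $P$ (note $x \in \langle K \rangle_{\ZZ} \subseteq M \subseteq P$ since $M$ is free), and with $A$ the column vector $(1,\ldots,1)^T$ and $b_1 = 1$, to obtain $t'_1, \ldots, t'_r \in \QQ_{\geq 0}$ with $t'_1 + \cdots + t'_r = 1$ and $x = t'_1 x_1 + \cdots + t'_r x_r$. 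Clearing denominators, write $t'_i = a_i/N$ with $a_i \in \ZZ_{\geq 0}$ and $a_1 + \cdots + a_r = N$. Then $Nx = a_1 x_1 + \cdots + a_r x_r$, which, interpreted as a sum where $x_i$ appears $a_i$ times, exhibits $Nx \in N \ast K$, so $x \in (1/N)(N \ast K)$.

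For (2), I plan to show (2.1) $\Leftrightarrow$ (2.2) and (2.2) $\Leftrightarrow$ (2.3). By (1),
\[
\langle K \rangle_{\ZZ} \cap \Conv_{\RR}(K) = \langle K \rangle_{\ZZ} \cap \bigcup_{m=1}^{\infty} \frac{1}{m}(m \ast K).
\]
Since $K \subseteq \langle K \rangle_{\ZZ}$ and $K \subseteq (1/1)(1 \ast K)$, the inclusion $K \subseteq \langle K \rangle_{\ZZ} \cap \Conv_{\RR}(K)$ always holds. Thus (2.2) is equivalent to the reverse inclusion, which by the displayed formula says that $\langle K \rangle_{\ZZ} \cap (1/m)(m \ast K) \subseteq K$ for every $m \geq 1$; this is precisely the definition of a convex lattice, so (2.1) $\Leftrightarrow$ (2.2).

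Finally, (2.2) $\Rightarrow$ (2.3) is trivial by taking $N = \langle K \rangle_{\ZZ}$ and $\Delta = \Conv_{\RR}(K)$. Conversely, if $K = N \cap \Delta$ for some $\ZZ$-submodule $N$ and convex set $\Delta \subseteq M_{\RR}$, then $\langle K \rangle_{\ZZ} \subseteq N$ and $\Conv_{\RR}(K) \subseteq \Delta$, whence $\langle K \rangle_{\ZZ} \cap \Conv_{\RR}(K) \subseteq N \cap \Delta = K$, and the reverse inclusion is automatic. The only genuine obstacle is the rationality argument in (1), but this is exactly what Lemma~\ref{lem:conv:RR:QQ} was designed to supply; everything else is set-theoretic bookkeeping.
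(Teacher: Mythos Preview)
Your proof is correct and follows essentially the same route as the paper: both use Lemma~\ref{lem:conv:RR:QQ} to pass from real to rational convex coefficients in (1), and both deduce the equivalences in (2) from the inclusions $\langle K\rangle_{\ZZ}\subseteq N$ and $\Conv_{\RR}(K)\subseteq\Delta$. The only cosmetic difference is that the paper arranges (2) as a cycle $(2.1)\Rightarrow(2.2)\Rightarrow(2.3)\Rightarrow(2.1)$, whereas you prove two equivalences; the content is identical.
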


\begin{proof}
(1) Obviously $ \langle K \rangle_{\ZZ}  \cap \bigcup_{m=1}^{\infty} (1/m)(m \ast K) \subseteq \langle K \rangle_{\ZZ} \cap \Conv_{\RR}(K)$.
We assume that $x \in \langle K \rangle_{\ZZ} \cap \Conv_{\RR}(K)$. Then
there are $a_1, \ldots, a_l \in K$ and $\mu_1, \ldots, \mu_l \in \RR_{\geq 0}$ such that
$x = \mu_1 a_1 + \cdots + \mu_l a_l$ and $\mu_1 + \cdots + \mu_l = 1$.
As $x \in \langle K \rangle_{\ZZ} \subseteq M$,
by using Lemma~\ref{lem:conv:RR:QQ}, 
we can find $\lambda_1, \ldots, \lambda_l \in \QQ_{\geq 0}$ such that
$\lambda_1 + \cdots + \lambda_l = 1$ and
$x = \lambda_1 a_1 + \cdots + \lambda_l a_l$.
We set $\lambda_i = d_i/m$ for $i=1, \ldots, l$.
Then, as  $d_1 + \cdots + d_l = m$, we have
\[
x = \frac{d_1 x_1 + \cdots + d_l x_l}{m} \in \langle K \rangle_{\ZZ} \cap \frac{1}{m}(m \ast K).
\]

\medskip
(2)
(2.1) $\Longrightarrow$ (2.2) : Since $K$ is a convex lattice, by (1),
$K =  \langle K \rangle_{\ZZ} \cap \Conv_{\RR}(K)$.

(2.2) $\Longrightarrow$ (2.3) is obvious.

(2.3) $\Longrightarrow$ (2.1) : 
First of all, note that $\langle K \rangle_{\ZZ} \subseteq N$ and
$\Conv_{\RR}(K) \subseteq \Delta$. 
Thus
\[
\langle K \rangle_{\ZZ}  \cap \frac{1}{m} (m \ast K)  \subseteq  \langle K \rangle_{\ZZ} \cap \Conv_{\RR}(K) \subseteq
N \cap \Delta  = K.
\]
\end{proof}

Let $K$ be a subset of $M$.
Then, by the above proposition, 
\[
\langle K \rangle_{\ZZ} \cap \bigcup_{m=1}^{\infty} \frac{1}{m}(m \ast K) =
\{ x \in \langle K \rangle_{\ZZ} \mid \exists m \in \ZZ_{> 0}\ \ mx \in m \ast K \}
\]
is a convex lattice, so that
it is called the {\em convex lattice hull} of $K$ and is denoted by $\CL(K)$.
Note that the convex lattice hull of $K$ is the smallest convex lattice containing $K$.
Let $f : M \to M'$ be an injective homomorphism of finitely generated free $\ZZ$-modules.
Then it is easy to see that $f(\CL(K)) = \CL(f(K))$.
Finally we consider the following lemma.
Ideas for the proof of the lemma can be found in Yuan's paper \cite[\S2.3]{YuanVol}.

\setcounter{equation}{0}
\begin{Lemma}
\label{lem:conv:module:rest:estimate}
Let $M$ be a finitely generated free $\ZZ$-module and
$r : M \to N$ a homomorphism of finitely generated $\ZZ$-modules.
For a symmetric finite subset  $K$ of $M$, we have the following estimation:
\addtocounter{Claim}{1}
\begin{equation}
\label{eqn:lem:conv:module:rest:estimate:1}
\log \# r(K) \geq
\log \#(K) - \log \#( \Ker(r) \cap (2 \ast K)).
\end{equation}
\addtocounter{Claim}{1}
\begin{equation}
\label{eqn:lem:conv:module:rest:estimate:2}
\log\# r(K) \leq \log \# (2 \ast K) 
- \log \#(\Ker(r) \cap K).
\end{equation}
Moreover, if $\Delta$ is a bounded and symmetric convex set in $M_{\RR}$ and
$a$ is a real number with $a \geq 1$, then
\addtocounter{Claim}{1}
\begin{equation}
\label{eqn:lem:conv:module:rest:estimate:3}
0 \leq \log \#(M \cap a \Delta) - \log \#(M \cap \Delta)
\leq
\log(\lceil 2a \rceil) \rank M.
\end{equation}
\end{Lemma}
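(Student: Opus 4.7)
The plan is to prove the three displayed inequalities by short combinatorial arguments: pigeonhole for the first, an injection built from a set-theoretic section for the second, and a quotient-by-$nM$ argument for the third.

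For \eqref{eqn:lem:conv:module:rest:estimate:1}, I would apply the pigeonhole principle to $r|_K : K \to r(K)$ to produce $x_0 \in K$ whose fiber $F := K \cap r^{-1}(r(x_0))$ has cardinality at least $\#K / \#r(K)$. Since $K$ is symmetric, the map $y \mapsto x_0 - y$ sends $F$ into $\Ker(r) \cap (2 \ast K)$ and is visibly injective; taking logarithms yields the claim.

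For \eqref{eqn:lem:conv:module:rest:estimate:2}, I would fix any set-theoretic section $s : r(K) \to K$ of $r|_K$ and verify that
\[
(\Ker(r) \cap K) \times r(K) \longrightarrow 2 \ast K, \qquad (k, z) \mapsto k + s(z),
\]
is injective: applying $r$ to an equality $k_1 + s(z_1) = k_2 + s(z_2)$ forces $z_1 = z_2$, and then $k_1 = k_2$. This gives $\#(\Ker(r) \cap K) \cdot \#r(K) \leq \#(2 \ast K)$, which is the desired estimate.

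For \eqref{eqn:lem:conv:module:rest:estimate:3}, the lower bound is immediate from $\Delta \subseteq a\Delta$ (valid since $0 \in \Delta$ by symmetry and convexity of $\Delta$). For the upper bound, set $n := \lceil 2a \rceil$ and consider the equivalence relation on $M \cap a\Delta$ defined by $x \sim y$ if and only if $x - y \in nM$; this partitions $M \cap a\Delta$ into at most $n^{\rank M}$ classes. Symmetry and convexity of $\Delta$ imply $a\Delta + a\Delta \subseteq 2a\Delta$, so for any $x \sim y$ the vector $(x-y)/n$ lies in $M$ and in $(2a/n)\Delta \subseteq \Delta$. Fixing a representative $x_0$ in each class, the map $x \mapsto (x-x_0)/n$ injects the class into $M \cap \Delta$, whence $\#(M \cap a\Delta) \leq n^{\rank M} \cdot \#(M \cap \Delta)$.

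The only step that needs a moment's care is the verification in part (3) that $(x-y)/n$ actually lies in $\Delta$; both the symmetric-convex hypothesis on $\Delta$ and the precise choice $n = \lceil 2a \rceil$ (ensuring $2a/n \leq 1$) are used at exactly this point. Beyond that, all three bounds reduce to direct double counting.
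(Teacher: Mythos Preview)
Your proposal is correct and follows essentially the same approach as the paper: fiber-counting with symmetry for \eqref{eqn:lem:conv:module:rest:estimate:1}, an injection $(\Ker(r)\cap K)\times r(K)\hookrightarrow 2\ast K$ for \eqref{eqn:lem:conv:module:rest:estimate:2}, and a quotient-by-$nM$ argument with $n=\lceil 2a\rceil$ for \eqref{eqn:lem:conv:module:rest:estimate:3}. The only cosmetic differences are that the paper bounds every fiber uniformly rather than invoking pigeonhole, and in the third part it applies \eqref{eqn:lem:conv:module:rest:estimate:1} to $K=M\cap(n/2)\Delta$ rather than to $M\cap a\Delta$ directly, but the content is identical.
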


\begin{proof}
Let $t \in r(K)$ and fix $s_0 \in K$ with
$r(s_0) = t$.
Then, for any $s \in r^{-1}(t) \cap K$,
\[
s - s_0 = s + (-s_0) \in \Ker(r) \cap (2 \ast K).
\]
Thus
\[
\#(r^{-1}(t) \cap K) \leq \#( \Ker(r) \cap (2 \ast K)).
\]
Therefore,
\[
\#(K) = \sum_{t \in r(K)} \#(r^{-1}(t) \cap K)
\leq \#(r(K)) \#( \Ker(r) \cap (2 \ast K)),
\]
as required.

\medskip
We set $S = K +  \Ker(r) \cap K$.
Then $r(S) = r(K)$ and
$S \subseteq 2 \ast K$. Moreover, for all $t \in r(S)$,
\[
\#(\Ker(r) \cap K) \leq \#(S \cap r^{-1}(t)).
\]
Indeed, if we choose $s_0 \in K$ with $r(s_0) = t$,
then 
\[
s_0 +  \Ker(r) \cap K \subseteq S \cap r^{-1}(t).
\]
Therefore,
\begin{align*}
\#(2 \ast K) & \geq \#(S) = \sum_{t \in r(S)} \#(r^{-1}(t) \cap S) \geq
\#(r(S)) \#(\Ker(r) \cap K) \\
& = \#(r(K))  \#(\Ker(r) \cap K)
\end{align*}
as required.

\medskip
We set $n =  \lceil 2a \rceil$.
Applying \eqref{eqn:lem:conv:module:rest:estimate:1} to the case where
$K = M \cap (n/2)\Delta$ and $r : M \to M/nM$, we have
\begin{multline*}
\log \#(M \cap (n/2) \Delta) - \log \#(nM \cap 2 \ast ((n/2) \Delta \cap M)) \\
\leq
\log \# M/nM = \log(n) \rank M.
\end{multline*}
Note that $a \leq n/2$ and
\begin{multline*}
\#(nM \cap 2 \ast ((n/2) \Delta \cap M)) \leq \#(nM \cap (n\Delta \cap M)) \\
= \#(nM \cap n\Delta) = \#(M \cap \Delta).
\end{multline*}
Hence we obtain
\begin{multline*}
0 \leq \log \#(M \cap a \Delta) - \log \#(M \cap \Delta) \\
\leq
\log \#(M \cap (n/2)\Delta) - \log \#(nM \cap 2 \ast ((n/2) \Delta \cap M))
\leq
\log(n) \rank M.
\end{multline*}
\end{proof}

\setcounter{Theorem}{0}
\subsection{Concave function and its continuity}
Let $P$ be a vector space over $\QQ$ and $V = P \otimes \RR$.
Let $C$ be a non-empty  open convex set in $V$.
Let  $f : C \cap P \to \RR$ be a concave function over $\QQ$
(cf. Conventions and terminology~\ref{CT:convex:set}).

We assume that $P$ is finite dimensional and $d = \dim_{\QQ} P$.
Let $h$ be an inner product of $V$.
For $x \in V$, we denote $\sqrt{h(x,x)}$ by $\Vert x \Vert_h$.
Moreover, for a positive number $r$ and $x \in V$,
we set 
\[
U(x, r) = \{ y \in V \mid \Vert y - x \Vert_h < r \}.
\]

\begin{Proposition}
\label{prop:concave:Lipschitz:cont}
For any $x \in C$, there are positive numbers  $\epsilon$ and $L$ such that
$U(x, \epsilon) \subseteq C$ and $\vert f(y) - f(z) \vert \leq L\Vert y - z \Vert_h$ for all
$y, z \in U(x, \epsilon) \cap P$.
In particular, there is a unique concave and continuous function $\tilde{f} : C \to \RR$
such that $\rest{\tilde{f}}{C \cap P} = f$.
\end{Proposition}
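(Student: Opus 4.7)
The plan is to establish a local Lipschitz estimate for $f$ on $C \cap P$ around a rational center, and then extend by density to all of $C$. Fix $x \in C$; since $C$ is open in the strong topology and $P$ is dense in $V$, I will pick $x_0 \in C \cap P$ and a rational $\epsilon_0 > 0$ with $U(x_0, 4\epsilon_0) \subseteq C$, chosen so that $U(x, \epsilon) \subseteq U(x_0, \epsilon_0/2)$ for some $\epsilon > 0$ to be fixed later.

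The first step is to bound $f$ from below on $U(x_0, 2\epsilon_0) \cap P$. Using a $\QQ$-basis of $P$ I will construct a polytope $\sigma$ with rational vertices $v_0, \ldots, v_N \in C \cap P$ such that $U(x_0, 2\epsilon_0) \subseteq \sigma \subseteq U(x_0, 4\epsilon_0)$. For $y \in U(x_0, 2\epsilon_0) \cap P$, writing $y$ as a convex combination of the $v_i$ and invoking Lemma~\ref{lem:conv:RR:QQ} to make the coefficients lie in $\QQ_{\geq 0}$, $\QQ$-concavity gives $f(y) \geq \min_i f(v_i) =: m$. For the upper bound on $U(x_0, \epsilon_0) \cap P$, I use the reflection $y' := 2x_0 - y$, which also lies in $U(x_0, \epsilon_0) \cap P$ (because $x_0, y \in P$); since $x_0 = \frac{1}{2}(y + y')$, $\QQ$-concavity gives $f(y) \leq 2 f(x_0) - f(y') \leq 2 f(x_0) - m =: M$.

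With $m \leq f \leq M$ on $U(x_0, \epsilon_0) \cap P$, I prove the Lipschitz bound on $U(x_0, \epsilon_0/2) \cap P$ by the standard segment-extension trick, kept rational. For distinct $y, z$ in this set I pick a positive rational $s$ with $s \Vert z - y \Vert_h < \epsilon_0/2$ and set $w := z + s (z - y) \in P$; then $w \in U(x_0, \epsilon_0) \cap P$ and $z = \frac{s}{1+s} y + \frac{1}{1+s} w$, so $\QQ$-concavity yields $f(y) - f(z) \leq (M - m)/(1+s)$. Letting $s$ tend to $\epsilon_0/(2 \Vert z - y \Vert_h)$ along rationals produces $f(y) - f(z) \leq (2(M-m)/\epsilon_0)\, \Vert z - y \Vert_h$, and exchanging $y$ and $z$ yields the opposite bound. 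This proves the first assertion with $L = 2(M-m)/\epsilon_0$.

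For the second assertion, the Lipschitz estimate together with density of $C \cap P$ in $C$ gives a unique continuous extension $\tilde{f} : C \to \RR$, and concavity over $\RR$ follows by approximating points of $C$ by sequences in $C \cap P$ and real scalars in $[0,1]$ by rationals, then passing to the limit in the $\QQ$-concavity inequality for $f$. The main technical point is that every convex combination appearing in the argument must have rational coefficients so as to stay in the domain of $f$; this is what Lemma~\ref{lem:conv:RR:QQ} delivers, and it is also what forces the use of the reflection $y \mapsto 2x_0 - y$ and the extension $z \mapsto z + s(z-y)$ with $s \in \QQ$ as the right devices on which to hang the boundedness and Lipschitz estimates.
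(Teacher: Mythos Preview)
Your proposal is correct and follows essentially the same route as the paper's own proof: bound $f$ below on a rational-vertex polytope via Lemma~\ref{lem:conv:RR:QQ}, bound it above via the reflection $y\mapsto 2x_0-y$ through a rational center, and then run the segment-extension trick with a rational parameter to get the Lipschitz estimate. The only cosmetic differences are that the paper uses a $(d{+}1)$-point simplex rather than a general polytope, and it picks a single rational $a\in\bigl[\tfrac{\epsilon/2}{\Vert z-y\Vert_h}+1,\ \tfrac{\epsilon}{\Vert z-y\Vert_h}+1\bigr]$ directly instead of letting your $s$ tend to the endpoint; both devices serve the same purpose of keeping all convex coefficients in $\QQ$.
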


\begin{proof}
The proof of this proposition is almost same as one of  \cite[Theorem 2.2]{Gru},
but we need a slight modification because  $x$ is not necessarily a point of $P$.
Let us begin with the following claim.

\begin{Claim}
\label{claim:prop:concave:Lipschitz:cont:1}
$f(t_1 x_1 + \cdots + t_r x_r) \geq t_1 f(x_1) + \cdots + t_r f(x_r)$ holds for any $x_1, \ldots, x_r \in C \cap P$ and
$t_1, \ldots, t_r \in \QQ_{\geq 0}$ with $t_1 + \cdots + t_r = 1$.
\end{Claim}

We prove it by induction on $r$. In the case where $r =1, 2$, the assertion is obvious.
We assume $r \geq 3$. If $t_1 = 1$, then the assertion is also obvious, so that we may assume that $t_1 < 1$.
Then, by using the hypothesis of induction,
\begin{align*}
f(t_1 x_1 + \cdots + t_r x_r) & = f\left( t_1 x_1 + (1-t_1) \left( \frac{t_2}{1-t_1} x_2 + \cdots + \frac{t_r}{1-t_1} x_r \right)\right) \\
& \geq t_1 f(x_1) + (1-t_1) f\left( \frac{t_2}{1-t_1} x_2 + \cdots + \frac{t_r}{1-t_1} x_r \right) \\
& \geq t_1 f(x_1) + (1-t_1) \left( \frac{t_2}{1-t_1}f(  x_2)  + \cdots + \frac{t_r}{1-t_1} f(x_r) \right) \\
& =  t_1 f(x_1) + \cdots + t_r f(x_r).
\end{align*}
\CQED

\begin{Claim}
\label{claim:prop:concave:Lipschitz:cont:2}
There are $x_1, \ldots, x_{d+1} \in C \cap P$ such that
$x$ is an interior point of $\Conv_{\RR}(\{ x_1, \ldots, x_{d+1} \})$.
\end{Claim}

Let us consider the function $\phi : C^d \to \RR$ given by
\[
\phi(y_1, \ldots, y_d) = \det (y_1 - x, \ldots, y_d - x).
\]
Then $(C^d)_{\phi} = \{ (y_1, \ldots, y_d) \in C^d\mid \phi(y_1, \ldots, y_d) \not= 0\}$ is a non-empty open set, so that
we can find $(x_1, \ldots, x_d) \in (C^d)_{\phi}$ with $x_1, \ldots, x_d \in P$.
Next we consider 
\[
\{ x - t_1 (x_1 - x) - \cdots - t_d (x_d - x) \mid t_1, \ldots, t_d \in \RR_{> 0} \} \cap C.
\]
This is also a non-empty open set in $C$.
Thus there are $x_{d+1} \in C \cap P$ and $t_1, \ldots, t_d \in \RR_{>0}$
with $x_{d+1} = x - t_1 (x_1 - x) - \cdots - t_d (x_d - x)$, so that
\[
x = \frac{t_1 x_1 + \cdots + t_d x_d + x_{d+1}}{t_1 + \cdots + t_d + 1}.
\]
Thus $x$ is an interior point of $\Conv_{\RR}(\{ x_1, \ldots, x_{d+1} \})$.
\CQED

\begin{Claim}
\label{claim:prop:concave:Lipschitz:cont:3}
There is a positive number $c_1$ such that
$f(y) \geq - c_1$ holds for all $y \in \Conv_{\RR}(\{ x_1, \ldots, x_{d+1} \}) \cap P$.
\end{Claim}

As $y \in \Conv_{\RR}(\{ x_1, \ldots, x_{d+1} \}) \cap P$, by Lemma~\ref{lem:conv:RR:QQ}, there are $t_1, \ldots, t_{d+1} \in \QQ_{\geq 0}$ such that
\[
t_1 + \cdots  + t_{d+1} = 1\quad\text{and}\quad
y = t_1 x_1 + \cdots + t_{d+1} x_{d+1}.
\]
Thus, by Claim~\ref{claim:prop:concave:Lipschitz:cont:1},
\begin{multline*}
f(y) = f(t_1 x_1 + \cdots + t_{d+1} x_{d+1}) \\
\geq
t_1 f(x_1) + \cdots + t_{d+1} f(x_{d+1}) \geq -t_1 \vert f(x_1)\vert  - \cdots - t_{d+1} \vert f(x_{d+1}) \vert \\
\geq -(\vert f(x_1)\vert  + \cdots +  \vert f(x_{d+1}) \vert),
\end{multline*}
as required.
\CQED

\medskip
Let us choose a positive number $\epsilon$ and $x_0 \in P$ such that $U(x, 4\epsilon) \subseteq \Conv_{\RR}(\{ x_1, \ldots, x_{d+1} \})$ and
$x_0 \in U(x, \epsilon) \cap P$. Then 
\[
U(x, \epsilon) \subseteq U(x_0, 2\epsilon) \subseteq U(x_0, 3 \epsilon) \subseteq U(x, 4 \epsilon) \subseteq \Conv_{\RR}(\{ x_1, \ldots, x_{d+1} \}).
\]

\begin{Claim}
\label{claim:prop:concave:Lipschitz:cont:4}
There is a positive number $c_2$ such that $\vert f(y) \vert \leq c_2$ holds for all $y \in U(x_0, 3\epsilon) \cap P$.
\end{Claim}

As $(2x_0 - y) - x_0 = x_0 - y$,  we have $2x_0 - y \in U(x_0, 3\epsilon) \cap P$, and hence
\[
f(x_0) = f(y/2 + (2x_0 - y)/2) \geq f(y)/2 + f(2x_0 - y)/2.
\] 
Therefore,
\[
-c_1 \leq f(y) \leq 2 f(x_0) - f(2x_0 - y) \leq 2f(x_0) + c_1,
\]
as required.
\CQED

\medskip
Let $y, z \in U(x, \epsilon) \cap P$ with $y \not= z$.
We choose $a \in \QQ$ with
\[
\frac{\epsilon/2}{\Vert z - y \Vert_h} + 1 \leq a \leq \frac{\epsilon}{\Vert z - y \Vert_h} + 1
\]
and we set $w = a(z-y) + y$. Then
$\epsilon/2 \leq \Vert w - z \Vert_h \leq \epsilon$. Thus $w \in U(x_0, 3\epsilon) \cap P$.
Moreover, if we put $t_0 = 1/a$, then 
\[
z = (1-t_0)y + t_0w,\quad
z-y = t_0(w-y)\quad\text{and}\quad
w-z = (1-t_0)(w- y).
\]
As $\Vert z - y \Vert_h/\Vert w - z \Vert_h = t_0/(1-t_0)$, we have
\begin{align*}
\frac{f(z) - f(y) }{\Vert z - y \Vert_h} & = \frac{ f((1-t_0)y + t_0w) - f(y)}{\Vert z - y \Vert_h} \geq \frac{ (1-t_0)f(y)  + t_0f(w) - f(y)}{\Vert z - y \Vert_h} \\
& = t_0 \frac{ f(w) - f(y)}{\Vert z - y \Vert_h} = (1-t_0) \frac{ f(w) - f(y)}{\Vert w-z\Vert_h} \\
& = \frac{ f(w) - \left( (1-t_0)f(y) + t_0 f(w)\right)}{\Vert w-z\Vert_h} \geq \frac{f(w) - f(z)}{\Vert w - z \Vert_h} \\
& \geq \frac{-2c_2}{\Vert w - z \Vert_h} \geq 
\frac{-2c_2}{\epsilon/2} = \frac{-4c_2}{\epsilon}.
\end{align*}
Exchanging $y$ and $z$, we obtain the same inequality, that is,
\[
\frac{f(y) - f(z) }{\Vert y - z \Vert_h} \ \geq  \frac{-4c_2}{\epsilon}.
\]
Therefore, $\vert f(z) - f(y) \vert \leq (4c_2/\epsilon) \Vert y - z \Vert_h$ for all $y, z \in U(x, \epsilon) \cap P$.

For the last assertion, note the following:
Let $\{ a_n \}_{n=1}^{\infty}$ be a Cauchy sequence on $C\cap P$ such that
$x = \lim_{n\to\infty} a_n \in C$.
Then, by the first assertion of this proposition, $\{ f(a_n) \}_{n=1}^{\infty}$ is also a Cauchy sequence in $\RR$, and hence
$\tilde{f}(x)$ is defined by $\lim_{n\to\infty} f(a_n)$.
\end{proof}

Next we do not assume that $P$ is finite dimensional. Then we have the following corollary.

\begin{Corollary}
\label{cor:concave:extension}
There is a unique concave and continuous function $\tilde{f} : C \to \RR$
such that $\rest{\tilde{f}}{C \cap P} = f$.
\end{Corollary}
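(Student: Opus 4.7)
The plan is to reduce to the finite-dimensional Proposition~\ref{prop:concave:Lipschitz:cont} by exhausting $P$ by finite-dimensional $\QQ$-subspaces. For each finite-dimensional $\QQ$-subspace $W \subseteq P$, set $V_W := W \otimes_{\QQ} \RR$, which is a finite-dimensional $\RR$-subspace of $V$. A choice of basis of $W$ extended to a basis of $P$ shows $P \cap V_W = W$, so the restriction $f_W := \rest{f}{C \cap W}$ is a $\QQ$-concave function on $(C \cap V_W) \cap P$; since $C \cap V_W$ is open and convex in $V_W$ in the usual topology, Proposition~\ref{prop:concave:Lipschitz:cont} furnishes a unique concave continuous extension $\tilde{f}_W : C \cap V_W \to \RR$ of $f_W$.

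These local extensions are compatible: if $W_1 \subseteq W_2$ are finite-dimensional $\QQ$-subspaces of $P$, then $\rest{\tilde{f}_{W_2}}{C \cap V_{W_1}}$ is a concave continuous extension of $f_{W_1}$, hence equals $\tilde{f}_{W_1}$ by the uniqueness clause of the proposition. Since every $x \in V = P \otimes_{\QQ} \RR$ lies in $V_W$ for some finite-dimensional $\QQ$-subspace $W$ (take $W$ to be the $\QQ$-span of the finitely many $P$-components appearing in any tensor expression of $x$), we may unambiguously define $\tilde{f}(x) := \tilde{f}_W(x)$ for any such $W$ with $x \in C \cap V_W$.

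Concavity at $x, y \in C$ and $t \in [0,1]$ follows by choosing a single $W$ containing both $x, y$ in $V_W$ and applying concavity of $\tilde{f}_W$. Continuity in the strong topology amounts to continuity on every finite-dimensional $\RR$-subspace of $V$; since each such subspace is contained in some $V_W$ and $\rest{\tilde{f}}{C \cap V_W} = \tilde{f}_W$ is continuous, this is clear. Uniqueness is entirely analogous: any other concave continuous extension $g$ restricts on each $C \cap V_W$ to a concave continuous extension of $f_W$, hence coincides with $\tilde{f}_W$, and therefore $g = \tilde{f}$. I expect no serious obstacle beyond the elementary identifications $P \cap V_W = W$ and the fact that every finite-dimensional $\RR$-subspace of $V$ lies in some $V_W$, both immediate from the definition of tensor product.
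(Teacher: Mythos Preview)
Your proposal is correct and follows essentially the same approach as the paper: reduce to the finite-dimensional case via Proposition~\ref{prop:concave:Lipschitz:cont} using the fact that every $x \in V$ lies in $Q \otimes_{\QQ} \RR$ for some finite-dimensional $\QQ$-subspace $Q \subseteq P$. The paper's proof is a one-line sketch of exactly this idea, and you have carefully supplied the compatibility, concavity, continuity, and uniqueness verifications that the paper leaves implicit.
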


\begin{proof}
It follows from Proposition~\ref{prop:concave:Lipschitz:cont} and the following facts:
If $x \in V$, then there is a finite dimensional vector subspace $Q$ of $P$ over $\QQ$ with $x \in Q \otimes \RR$.
\end{proof}

\setcounter{Theorem}{0}
\subsection{Good flag over a prime}
\label{subsection:good:flag:over:prime}
In this subsection, we observe the existence of good flags over
infinitely many prime numbers.

Let $X$ be a $d$-dimensional projective arithmetic variety.
Let $\pi : X \to \Spec(R)$ be the Stein factorization of
$X \to \Spec(\ZZ)$, where $R$ is an order of some number field $F$.
A chain 
\[
Y_{\cdot} : Y_0 = X \supset Y_1 \supset Y_2 \supset \cdots \supset Y_d
\]
of subschemes of $X$
is called a {\em good flag} of $X$ over a prime $p$ if the following conditions are satisfied:
\begin{enumerate}
\renewcommand{\labelenumi}{\rom{(\alph{enumi})}}
\item $Y_i$'s are integral and $\codim(Y_i) = i$ for $i=0, \ldots, d$.

\item There is $P \in \Spec(R)$ such that $R_P$ is normal,
$\pi^{-1}(P) = Y_1$ and the residue field $\kappa(P)$ at $P$ is isomorphic to $\FF_p$.
In particular, $Y_1$ is a Cartier divisor on $X$.

\item $Y_d$ consists of  a rational point $y$ over $\FF_p$.

\item $Y_i$'s are regular at $y$ for $i=0, \ldots, d$.

\item
There is a birational morphism $\mu : X' \to X$ of projective arithmetic varieties with the following properties:
\begin{enumerate}
\renewcommand{\labelenumii}{\rom{(\alph{enumi}.\arabic{enumii})}}
\item
$\mu$ is isomorphism over $y$.

\item
Let $Y'_i$ be the strict transform of $Y_i$.
Then $Y'_{i}$ is a Cartier divisor in $Y'_{i-1}$ for $i=1, \ldots, d$.
\end{enumerate}
\end{enumerate}

\setcounter{equation}{0}
\begin{Proposition}
There are good flags of $X$ over infinitely many prime numbers.
More precisely, if we set
$\mathcal{S}_{F/\QQ} = \{ p \in \Spec(\ZZ) \mid \text{$p$ splits completely in $F$ over $\QQ$}\}$,
then there is a finite subset $\Sigma$ of $\mathcal{S}_{F/\QQ}$ such that
we have a good flag over a prime in $\mathcal{S}_{F/\QQ} \setminus \Sigma$.
\end{Proposition}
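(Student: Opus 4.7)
The plan combines Chebotarev density with a spreading-out argument to locate a regular $\FF_p$-rational point on a fiber, then cuts out the flag using local parameters at that point together with a Bertini-type construction, and finally principalizes by a blow-up away from the chosen point.

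First, by the Chebotarev density theorem $\mathcal{S}_{F/\QQ}$ is infinite, and for each $p \in \mathcal{S}_{F/\QQ}$ there is $P \in \Spec(R)$ above $p$ with $\kappa(P) \cong \FF_p$. After excluding a finite set $\Sigma$, I would arrange simultaneously that (i) $R_P$ is normal; (ii) $Y_1 := \pi^{-1}(P)$ is geometrically integral, by spreading out the geometric integrality of the generic fiber $X_F$ (which is geometrically integral because $\pi$ is the Stein factorization and $F$ has characteristic zero); and (iii) $Y_1^{\mathrm{sm}}(\FF_p) \neq \emptyset$, by spreading out generic smoothness of $X_F$ and applying Lang--Weil to the geometrically integral smooth locus for $p$ sufficiently large. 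Choose such a point $y$. Then $\OO_{X,y}$ is regular of dimension $d$, and a uniformizer $t_1$ of $R_P$ combined with a regular system of parameters of $\OO_{Y_1,y}$ yields a regular system of parameters $(t_1, \ldots, t_d)$ of $\OO_{X,y}$. Conditions (a)--(c) and the regularity of $Y_0, Y_1$ at $y$ are thereby in place, and $Y_1$ is Cartier by construction.

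Second, I would construct $Y_2, \ldots, Y_{d-1}$ inductively. Given $Y_{i-1}$ integral of codimension $i-1$ and regular at $y$, choose a section $s_i$ of a sufficiently positive line bundle on $Y_{i-1}$ whose germ at $y$ equals $t_i$ up to a unit and whose zero locus is geometrically integral of codimension one. Existence is a Bertini-type argument over $\FF_p$: the local condition at $y$ carves out a non-empty affine subspace of the linear system, geometric integrality of the zero locus holds on a dense open subset, and these two conditions meet provided the line bundle is positive enough to separate jets at $y$. Taking $Y_i$ to be the unique irreducible component of the zero locus through $y$ yields an integral codimension-$i$ subscheme regular at $y$, as required by (a) and (d). The point $y$ plays the role of $Y_d$.

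Finally, for (e), the local parameter structure guarantees that each $Y_i$ is Cartier in $Y_{i-1}$ at $y$, so the locus where Cartier-ness fails is a closed subscheme of $Y_{i-1}$ avoiding $y$. I would construct $\mu : X' \to X$ by principalizing, via a sequence of blow-ups with centers in these non-Cartier loci, the ideal sheaves $\mathcal{I}_{Y_i/Y_{i-1}}$; since every center avoids $y$, the resulting birational morphism is an isomorphism over $y$, and the universal property of blow-ups makes each strict transform $Y'_i$ Cartier in $Y'_{i-1}$. The main obstacle I anticipate is precisely this last step: ensuring that blow-ups performed for one index do not destroy Cartier-ness achieved for earlier indices requires careful bookkeeping of strict transforms under iterated blow-ups, for instance via Raynaud--Gruson flattening or principalization of a single product ideal. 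A secondary subtlety is that the Bertini step imposing a prescribed local form at $y$ needs the line bundle on $Y_{i-1}$ to separate jets at $y$, which is secured by taking sufficiently large twists and possibly further enlarging $\Sigma$.
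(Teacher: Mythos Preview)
Your strategy differs substantially from the paper's, and the Bertini step contains a genuine gap. The paper does \emph{not} build the flag fiber-by-fiber over $\FF_p$; instead it fixes once and for all a generic resolution $\mu:X'\to X$, chooses a chain of smooth subvarieties $Y'_1\supset\cdots\supset Y'_{d-1}$ on the \emph{generic} fiber $X'_F$ (Bertini over the infinite field $F$), takes Zariski closures, and then finds a nonempty open $U\subset\Spec(O_F)$ over which all the closures remain smooth. For $p\in\mathcal{S}_{F/\QQ}$ lying under $U$, the special fibers $Y_i\otimes\kappa(P)$ are smooth and the Weil bound for curves on the one-dimensional piece $Y_{d-1}\otimes\kappa(P)$ supplies an $\FF_p$-point avoiding the exceptional locus. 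Condition~(e) is then free: $\mu$ itself is the required birational map, and Cartier-ness of $Y'_i$ in $Y'_{i-1}$ holds because everything is smooth upstairs.

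Your approach instead attempts Bertini \emph{after} specializing to $\FF_p$, and the argument you sketch---``geometric integrality holds on a dense open subset of the linear system, which meets the affine subspace cut out by the local condition''---is the classical Bertini reasoning that fails over a finite field: a dense open subset of $\PP^N(\overline{\FF_p})$ need not contain any $\FF_p$-point. This can be repaired via Poonen's Bertini theorems over finite fields (which do allow finitely many local constraints), but that is a substantially heavier tool than what the paper uses and you have not invoked it. The secondary blow-up step for~(e) is also shakier than you indicate: principalizing the ideal $\mathcal{I}_{Y_i/Y_{i-1}}$ is a condition on $Y_{i-1}$, not on $X$, and it is not clear that blowing up centers on $X$ achieves it for the strict transforms without disturbing earlier steps. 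The paper sidesteps both difficulties by working on the resolution from the outset, where smoothness makes every $Y'_i\subset Y'_{i-1}$ automatically Cartier.
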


\begin{proof}
Let $\mu : Y \to X$ be a generic resolution of singularities of $X$ such that
$Y$ is normal.
Let $\pi : X \to \Spec(R)$ and $\tilde{\pi} : Y \to \Spec(O_F)$ be the Stein factorizations
of $X \to \Spec(\ZZ)$ and $Y \to \Spec(\ZZ)$ respectively.
Then we have the following commutative diagram:
\[
\begin{CD}
X @<{\mu}<< Y \\
@V{\pi}VV @VV{\tilde{\pi}}V \\
\Spec(R) @<{\rho}<< \Spec(O_F).
\end{CD}
\]
Let us choose a proper closed subset $Z$ of $X$ such that
$\mu : Y \setminus \mu^{-1}(Z) \to X \setminus Z$ is an isomorphism.
We set $E = \mu^{-1}(Z)$.
Let us choose a chain 
\[
Y'_1 = Y \times_{\Spec(O_F)} \Spec(F) \supset Y'_2 \supset \cdots \supset Y'_{d-1}
\]
of smooth subvarieties of $Y \times_{\Spec(O_F)} \Spec(F)$ such that
$\codim(Y'_i) = i-1$ for $i=1, \ldots, d-1$ and
$\dim (Y'_{d-1} \cap (E \times_{\Spec(O_F)} \Spec(F))) \leq 0$.
Let $Y_i$ be the Zariski closure of $Y'_i$ in $Y$.
Then there is a non-empty open set $U$ of $\Spec(O_F)$ such that
$\rho$ is an isomorphism over $U$,
$Y_1=Y, Y_2, \ldots, Y_{d-1}$ are smooth over $U$ and
$Y_{d-1} \cap E$ is either finite or empty over $U$.
Let $e$ be the degree of $Y_{d-1} \cap E$ over $U$.
Note that $e$ might be zero.
If we put
\[
\Sigma_1 = 
\{ p \in \Spec(\ZZ) \mid \text{there is $P \in \Spec(O_F) \setminus U$ with $p\ZZ = P \cap \ZZ$}\},
\]
then $\Sigma_1$ is a finite set.
Let $p \in \mathcal{S}_{F/\QQ} \setminus \Sigma_1$ and $P \in \Spec(O_K)$ with
$p\ZZ = P \cap \ZZ$. Then $P \in U$ and the residue field at $P$ is isomorphic to $\FF_p$.
By Weil's conjecture for curves,
\[
p + 1 - 2g\sqrt{p} \leq \#(Y_{d-1} \otimes \kappa(P))(\FF_{p}),
\]
where $g$ is the genus of $Y'_{d-1}$.
Thus there is a finite set $\Sigma_2$ such that,
if $p \in \mathcal{S}_{F/\QQ} \setminus (\Sigma_1 \cup \Sigma_2)$, then
$p + 1 - 2g \sqrt{p} > e$,
which means that there is
$x \in (Y_{d-1} \otimes \kappa(P))(\FF_{p})$ with $x \not\in E$.
Since, for $p \in\mathcal{S}_{F/\QQ} \setminus (\Sigma_1 \cup \Sigma_2)$,
\[
Y \supset Y_{1} \otimes \kappa(P) \supset \cdots \supset
Y_{d-1} \otimes \kappa(P) \supset \{ x \}
\]
is a good flag over $p$,
\[
X \supset \mu(Y_{1} \otimes \kappa(P)) \supset \cdots \supset
\mu(Y_{d-1} \otimes \kappa(P)) \supset \{ \mu(x) \}
\]
is also a good flag over $p$.
\end{proof}

\renewcommand{\theTheorem}{\arabic{section}.\arabic{Theorem}}
\renewcommand{\theClaim}{\arabic{section}.\arabic{Theorem}.\arabic{Claim}}
\renewcommand{\theequation}{\arabic{section}.\arabic{Theorem}.\arabic{Claim}}

\section{Estimation of linear series in terms of valuation vectors}

The context of this section is a generalization of Yuan's paper \cite{YuanVol}.
Let us begin with the following proposition, which is 
a key to Theorem~\ref{thm:estimate:valuation:vec}.

\setcounter{equation}{0}
\begin{Proposition}
\label{prop:estimate:val:on:Y:1}
Let $X$ be a $d$-dimensional projective arithmetic variety and
fix a good flag $Y_{\cdot} : X \supset Y_1 \supset Y_2 \supset \cdots \supset Y_d$ over a prime $p$.
Let $L$ be an invertible sheaf on $X$, $M$ a $\ZZ$-submodule of $H^0(X, L)$ and
$\Delta$ a bounded symmetric convex set in $H^0(X, L)_{\RR}$.
Let
$r : H^0(X, L) \to H^0(Y_1, \rest{L}{Y_1})$ be the natural homomorphism,
$M' = M \cap H^0(X, L - Y_1)$ and $\beta = p \rank M$.
Then we have the following:
\addtocounter{Claim}{1}
\begin{equation}
\label{eqn:prop:estimate:val:on:Y:1:1}
\# \nu_{Y_1}\left(r(M \cap \Delta) \setminus \{ 0 \} \right) \log p
\leq \log \#(M \cap 2\beta\Delta) - \log \#(M' \cap \beta \Delta)
\end{equation}
and
\addtocounter{Claim}{1}
\begin{equation}
\label{eqn:prop:estimate:val:on:Y:1:2}
\# \nu_{Y_1}\left(r(M \cap \Delta) \setminus \{ 0 \}\right) \log p
\geq \log \#(M \cap (1/\beta)\Delta) - \log \#(M' \cap (2/\beta) \Delta),
\end{equation}
where
$\nu_{Y_1}$ is the valuation on $Y_1$ attached to a flag
$Y_1 \supset Y_2 \supset \cdots \supset Y_d$.
\end{Proposition}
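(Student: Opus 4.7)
I will proceed by interpreting the valuation $\nu_{Y_1}$ at $y$ as a power-series expansion with coefficients in $\FF_p$. The good-flag hypotheses guarantee that $y$ is an $\FF_p$-rational regular point of $Y_1$ with a regular system of local parameters $t_2,\ldots,t_d$, so Cohen's structure theorem gives $\widehat{\OO}_{Y_1,y}\cong\FF_p\lformal t_2,\ldots,t_d\rformal$. For each $\mathbf{n}\in\ZZ_{\geq 0}^{d-1}$ the coefficient map $\mathrm{in}_{\mathbf{n}}\colon r(\langle M\rangle_{\ZZ})\to\FF_p$ picking off the $t^{\mathbf{n}}$-coefficient is therefore $\ZZ$-linear, vanishes on sections of strictly larger valuation, and $\mathrm{in}_{\nu_{Y_1}(s)}(s)\in\FF_p^{\ast}$ for every nonzero $s$. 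Since $H^0(Y_1,\rest{L}{Y_1})$ is an $\FF_p$-module, distinct valuation vectors produce $\FF_p$-linearly independent elements, whence $\#T\leq\rank M$, where $T:=\nu_{Y_1}(r(M\cap\Delta)\setminus\{0\})$; in particular $(p-1)\#T<\beta$.

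\textbf{Upper bound.} Choose $s_{\mathbf{n}}\in M\cap\Delta$ with $\nu_{Y_1}(r(s_{\mathbf{n}}))=\mathbf{n}$ for each $\mathbf{n}\in T$ and define
\[
\Psi\colon\{0,1,\ldots,p-1\}^{T}\times(M'\cap\beta\Delta)\longrightarrow M,\qquad ((a_{\mathbf{n}}),s_0)\longmapsto s_0+\sum_{\mathbf{n}\in T}a_{\mathbf{n}}s_{\mathbf{n}}.
\]
Convexity and symmetry of $\Delta$, together with $\sum a_{\mathbf{n}}\leq(p-1)\#T<\beta$, place the image inside $M\cap 2\beta\Delta$. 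For injectivity, suppose $\sum c_{\mathbf{n}}s_{\mathbf{n}}=s'_0-s_0\in M'$ with $c_{\mathbf{n}}\in\{-(p-1),\ldots,p-1\}$. If some $c_{\mathbf{n}}\neq 0$, let $\mathbf{n}^{\ast}$ be the lex-smallest such index. After applying $r$, the map $\mathrm{in}_{\mathbf{n}^{\ast}}$ kills every index $\mathbf{n}>\mathbf{n}^{\ast}$ and (by minimality) every remaining term, leaving $\bar{c}_{\mathbf{n}^{\ast}}\cdot\mathrm{in}_{\mathbf{n}^{\ast}}(r(s_{\mathbf{n}^{\ast}}))\neq 0$ in $\FF_p$, which contradicts $s'_0-s_0\in\Ker(r)$. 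Hence $\Psi$ is injective, so $\#(M\cap 2\beta\Delta)\geq p^{\#T}\#(M'\cap\beta\Delta)$ and \eqref{eqn:prop:estimate:val:on:Y:1:1} follows by taking logs.

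\textbf{Lower bound.} Set $K:=M\cap(1/\beta)\Delta$, a symmetric set. Applying \eqref{eqn:lem:conv:module:rest:estimate:1} of Lemma~\ref{lem:conv:module:rest:estimate} to $r$ gives
\[
\log\#r(K)\geq\log\#(M\cap(1/\beta)\Delta)-\log\#(\Ker(r)\cap(2\ast K)),
\]
while $\Ker(r)\cap(2\ast K)\subseteq M'\cap(2/\beta)\Delta$. It remains to show $\#r(K)\leq p^{\#T}$. Define $\Phi\colon r(K)\to\FF_p^{T}$ by $\Phi(s)_{\mathbf{n}}=\mathrm{in}_{\mathbf{n}}(s)$. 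If $\Phi(s_1)=\Phi(s_2)$ with $s_1\neq s_2$, then $s_1-s_2\in r(2\ast K)\subseteq r(M\cap(2/\beta)\Delta)\subseteq r(M\cap\Delta)$ (using $2/\beta\leq 1$), so $\nu_{Y_1}(s_1-s_2)\in T$; but then $\mathrm{in}_{\nu_{Y_1}(s_1-s_2)}(s_1-s_2)=0$ contradicts the leading coefficient lying in $\FF_p^{\ast}$. Thus $\Phi$ is injective, $\#r(K)\leq p^{\#T}$, and \eqref{eqn:prop:estimate:val:on:Y:1:2} follows.

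\textbf{Main obstacle.} The delicate point is setting up the coefficient maps $\mathrm{in}_{\mathbf{n}}$: one must justify that leading coefficients are well-defined elements of $\FF_p^{\ast}$, that the $\mathrm{in}_{\mathbf{n}}$ are additive, and that they truly behave as the coefficient functionals of a formal power series ring over $\FF_p$. This is precisely where conditions (b)–(d) of a good flag enter (the $\FF_p$-rationality of $y$, the regularity of every $Y_i$ at $y$, and the Cartier property of $Y_{i+1}$ in $Y_i$). Once this identification is established, both inequalities reduce to the single observation that a nonzero section in $r(M\cap\Delta)$ cannot have its $\mathrm{in}_{\mathbf{n}}$ vanish at every $\mathbf{n}\in T$.
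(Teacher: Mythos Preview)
Your argument is correct, and it rests on the same core fact as the paper's proof: elements of $r(M\cap\Delta)$ with distinct valuation vectors are $\FF_p$-linearly independent inside the $\FF_p$-vector space $H^0(Y_1,\rest{L}{Y_1})$. The paper invokes this as \cite[Lemma~1.3]{LazMus}, while you re-derive it by identifying $\widehat{\OO}_{Y_1,y}$ with $\FF_p\lformal t_2,\ldots,t_d\rformal$ and using the coefficient functionals $\mathrm{in}_{\mathbf{n}}$; both are valid and equivalent under the good-flag hypotheses.

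The packaging differs in a way worth recording. For \eqref{eqn:prop:estimate:val:on:Y:1:1} the paper passes through the $\FF_p$-span $V$ of $r(M\cap\Delta)$: it bounds $\#T\leq\dim_{\FF_p}V$, lifts a basis of $V$ to $s_1,\ldots,s_l\in M\cap\Delta$ (with $l\leq\rank M$), observes that the cube $\{\sum a_is_i:0\leq a_i\leq p-1\}$ surjects onto $V$ and sits inside $M\cap\beta\Delta$, and then applies \eqref{eqn:lem:conv:module:rest:estimate:2} of Lemma~\ref{lem:conv:module:rest:estimate}. Your map $\Psi$ collapses these two steps into a single injection $\{0,\ldots,p-1\}^T\times(M'\cap\beta\Delta)\hookrightarrow M\cap 2\beta\Delta$, so you never need \eqref{eqn:lem:conv:module:rest:estimate:2} at all. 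For \eqref{eqn:prop:estimate:val:on:Y:1:2} both arguments use \eqref{eqn:lem:conv:module:rest:estimate:1}; the paper then embeds the $\FF_p$-span $W$ of $r(M\cap(1/\beta)\Delta)$ into $r(M\cap\Delta)$ via a lifted-basis cube and reads off $\dim_{\FF_p}W\leq\#T$ from \cite[Lemma~1.3]{LazMus}, whereas your map $\Phi$ into $\FF_p^T$ gives $\#r(K)\leq p^{\#T}$ directly. The paper's route is a bit more conceptual (everything is a dimension count), yours is slightly more self-contained and avoids one citation and one half of Lemma~\ref{lem:conv:module:rest:estimate}.
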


\begin{proof}
Let $V$ be a vector space generated by
$r(M \cap \Delta)$ in $H^0(Y_1, \rest{L}{Y_1})$ over $\FF_p$.
Note that \cite[Lemma~1.3]{LazMus} holds if $Y_d$ consists of a rational point over a base field.
Thus
\begin{align*}
\# \nu_{Y_1}\left(r(\Delta \cap M)\setminus \{ 0 \}\right)  \log p & \leq
\# \nu_{Y_1}\left( V \setminus \{ 0 \}\right)  \log p \\
& = \dim_{\FF_p} (V) \log p &  
(\because \text{\cite[Lemma~1.3]{LazMus}})
\\
& = \log \# V.
\end{align*}
Let us choose $s_1, \ldots, s_l \in M \cap \Delta$ such that
$r(s_1), \ldots, r(s_l)$ forms a basis of 
$V$.
Let $n$ be the rank of $M$ and $\omega_1, \ldots, \omega_n$ a free basis of $M$.
Then $V \subseteq \sum_{i=1}^n \FF_p r(\omega_i)$ in $H^0(Y_1, \rest{L}{Y_1})$, which implies
$l \leq n$.
We set
\[
S = \left\{ \sum a_i s_i \mid a_i = 0, 1, \ldots, p-1\ (\forall i) \right\}.
\]
Then $S$ maps surjectively to $V$.
Moreover $S \subseteq  M \cap \beta \Delta$ because $l \leq n$.
Thus we get $\# V \leq
\#\left( r(M \cap \beta \Delta)\right)$.
Note that $\Ker \left(\rest{r}{M} : M \to H^0(Y_1, \rest{L}{Y_1})\right) = M'$.
Therefore, as $2 \ast (M \cap \beta \Delta) \subseteq M \cap 2\beta \Delta$,
by \eqref{eqn:lem:conv:module:rest:estimate:2},
\[
\log \# r(M \cap \beta \Delta)  \leq
\log \#(M \cap 2\beta \Delta) - \log \#( M' \cap \beta \Delta),
\]
which shows \eqref{eqn:prop:estimate:val:on:Y:1:1}.

Let $W$ be a vector space generated by
$r(M \cap (1/\beta)\Delta)$ in $H^0(Y_1, \rest{L}{Y_1})$ over $\FF_p$.
Let us choose $t_1, \ldots, t_{l'} \in M \cap (1/\beta)\Delta$ such that
$r(t_1), \ldots, r(t_{l'})$ forms a basis of
$W$. In the same way as before, we have $l' \leq n$.
We set 
\[
T = \left\{ \sum b_i t_i \mid b_i = 0, 1, \ldots, p-1\ (\forall i) \right\}.
\]
Then $T \subseteq M \cap \Delta$ and
$W = r(T) \subseteq
r(M \cap \Delta)$.
Thus
\begin{align*}
\# \nu_{Y_1}\left(r(M \cap \Delta)\setminus \{ 0 \}\right)\log p
& \geq \# \nu_{Y_1}\left( W \setminus \{ 0 \}\right)\log p \\
& = \dim_{\FF_p} (W) \log p \\
& = \log \# W\\
& \geq  \log \# r\left(M \cap (1/\beta)\Delta\right).
\end{align*}
On the other hand, as $2 \ast (M \cap (1/\beta)\Delta) \subseteq M \cap (2/\beta) \Delta$,
by \eqref{eqn:lem:conv:module:rest:estimate:1},
\[
\log \# r(M \cap (1/\beta)\Delta)
\geq
\log \# (M \cap (1/\beta)\Delta) - \log \#(M' \cap (2/\beta)\Delta),
\]
as required for \eqref{eqn:prop:estimate:val:on:Y:1:2}.
\end{proof}

Let $X$ be a $d$-dimensional projective arithmetic variety and $\overline{L}$ a continuous
hermitian invertible sheaf on $X$.
A subset $K$ of $H^0(X, L)$ is called an {\em arithmetic linear series of $\overline{L}$}
if $K$ is a symmetric convex lattice in $H^0(X, L)$
with 
\[
K \subseteq B_{\sup}(\overline{L}) := \{ s \in H^0(X_\RR, L_{\RR}) \mid \Vert s \Vert_{\sup} \leq 1\}.
\]
If $K = \aH(X, \overline{L})$ $(= H^0(X, L) \cap B_{\sup}(\overline{L}))$,
then $K$ is said to be {\em complete}.
Then we have the following theorem:

\begin{Theorem}
\label{thm:estimate:valuation:vec}
Let $\nu$ be the valuation attached to a good flag $Y_{\cdot} : X \supset Y_1 \supset Y_2 \supset \cdots \supset Y_d$
over a prime $p$.
If $K$ is an arithmetic linear series of $\overline{L}$, then the following estimation
\begin{multline*}
\vert \#\nu(K \setminus \{ 0 \}) \log p - \log \#(K) \vert \\
\leq \left( \log\left(4 p \rank \langle K \rangle_{\ZZ} \right) +
\frac{\sigma(\overline{L}) + \log\left(2 p \rank \langle K \rangle_{\ZZ} \right)}{\log p}  \log(4) \rank H^0(\OO_X) \right)
\rank \langle K \rangle_{\ZZ}
\end{multline*}
holds, 
where $\sigma(\overline{L})$ is given by
\[
\sigma(\overline{L}) := \inf_{\text{$\overline{A}$ : ample}}
\frac{\adeg (\acherncl_1(\overline{A})^{d-1} \cdot \acherncl_1(\overline{L}))}{\deg (A_{\QQ}^{d-1})}.
\]
\end{Theorem}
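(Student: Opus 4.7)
The plan is to decompose the valuation vector as $\nu = (\nu_1, \nu')$, where $\nu_1$ is the order of vanishing along $Y_1$ and $\nu'$ is the $(d-1)$-component valuation on $Y_1$ attached to the sub-flag $Y_1 \supset Y_2 \supset \cdots \supset Y_d$, then to apply Proposition~\ref{prop:estimate:val:on:Y:1} to each level set of $\nu_1$ and telescope. Set $M = \langle K \rangle_{\ZZ}$, $\Delta = \Conv_{\RR}(K)$ (so $K = M \cap \Delta$ by the convex-lattice property), $N = \rank M$, and $\beta = pN$. For each $k \geq 0$, identify $K^{(k)} := \{s \in K : \nu_1(s) \geq k\}$ with a convex lattice in $H^0(X, L-kY_1)$ by dividing out the $k$-th power of the canonical section of $Y_1$; set $M^{(k)} = \langle K^{(k)} \rangle_{\ZZ}$ and $\Delta^{(k)} = \Conv_{\RR}(K^{(k)})$, so $K^{(k)} = M^{(k)} \cap \Delta^{(k)}$ with $\rank M^{(k)} \leq N$. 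The basic identity
\[
\#\nu(K \setminus \{0\}) = \sum_{k \geq 0}\#\nu'\bigl(r_k(K^{(k)}) \setminus \{0\}\bigr),
\]
where $r_k : H^0(X, L-kY_1) \to H^0(Y_1, (L-kY_1)|_{Y_1})$ is restriction, reduces the problem to bounding each summand.

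Next, I apply Proposition~\ref{prop:estimate:val:on:Y:1} on the data $(L-kY_1, M^{(k)}, \Delta^{(k)})$, using the common value $\beta = pN \geq p\rank M^{(k)}$ (the proof of the proposition remains valid with any $\beta' \geq p\rank M^{(k)}$ on the upper-bound side, and yields a weaker but still valid lower bound with larger $\beta'$); the inclusion $\Delta^{(k+1)} \subseteq \Delta^{(k)} \cap H^0(X, L-(k+1)Y_1)_{\RR}$ is used to re-express the ``kernel'' term in terms of $M^{(k+1)}$ and $\Delta^{(k+1)}$. This gives
\[
\#\nu'(r_k(K^{(k)}) \setminus \{0\}) \log p \leq \log\#(M^{(k)} \cap 2\beta\Delta^{(k)}) - \log\#(M^{(k+1)} \cap \beta\Delta^{(k+1)})
\]
together with the companion lower bound using $1/\beta$ and $2/\beta$. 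Summing the upper bound over $k$ telescopes to
\[
\log p \cdot \#\nu(K \setminus \{0\}) \leq \log\#(M \cap 2\beta\Delta) + \sum_{k \geq 1}\Bigl[\log\#(M^{(k)} \cap 2\beta\Delta^{(k)}) - \log\#(M^{(k)} \cap \beta\Delta^{(k)})\Bigr].
\]
Rescaling $\Delta' = \beta\Delta^{(k)}$ and invoking Lemma~\ref{lem:conv:module:rest:estimate}(\ref{eqn:lem:conv:module:rest:estimate:3}) bounds each gap term by $(\log 4) \rank M^{(k)} \leq (\log 4) N$, while the same lemma bounds the leading correction $\log\#(M \cap 2\beta\Delta) - \log\#K$ by $\log(\lceil 4\beta \rceil) N \leq \log(4pN) \cdot N$. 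The lower bound is handled symmetrically.

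The remaining step -- and the one I expect to be the main obstacle -- is to bound the number of $k \geq 1$ contributing a nonzero gap term, equivalently those with $M^{(k)} \cap 2\beta\Delta^{(k)} \neq 0$, by $\frac{\sigma(\overline L) + \log(2pN)}{\log p} \cdot \rank H^0(\OO_X)$. For such $k$ there exists a nonzero $s \in H^0(X, L - kY_1)$ with $\Vert s \Vert_{\sup, \overline L} \leq 2\beta$; pairing $\acherncl_1(\overline A)^{d-1}$ with the arithmetic divisor $\widehat{\zeros}(s)$ for an arbitrary ample $\overline A$ -- using $\zeros(s) \geq kY_1$ for the algebraic contribution and the uniform Green-current bound coming from $\Vert s \Vert_{\sup} \leq 2\beta$ -- gives
\[
k \cdot \acherncl_1(\overline A)^{d-1} \cdot Y_1 \leq \acherncl_1(\overline A)^{d-1} \cdot \acherncl_1(\overline L) + \log(2pN) \cdot \deg(A^{d-1}_{\QQ}).
\]
Since $Y_1 = \pi^{-1}(P)$ is one of $\rank H^0(\OO_X)$ fiber components of $X \to \Spec \ZZ$ above $p$ (the good-flag hypothesis forces $p$ to split completely in $H^0(\OO_X) \otimes \QQ$, since $\kappa(P) \cong \FF_p$ and $R_P$ is normal), one computes $\acherncl_1(\overline A)^{d-1} \cdot Y_1 = \deg(A^{d-1}_{\QQ}) \log p / \rank H^0(\OO_X)$. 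Taking the infimum over $\overline A$ and dividing by this quantity converts the right-hand side into $(\sigma(\overline L) + \log(2pN)) \cdot \rank H^0(\OO_X) / \log p$, yielding the desired range bound on $k$. Multiplying this by the per-step gap $(\log 4) N$ and adding the leading correction $\log(4pN) \cdot N$ produces exactly the estimate in the statement; the delicate part is aligning the Green-current constants to produce precisely $\log(2pN)$ rather than a nearby value.
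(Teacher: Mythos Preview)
Your overall strategy---decompose $\nu = (\nu_1, \nu')$, apply Proposition~\ref{prop:estimate:val:on:Y:1} at each level $k$, telescope, then bound the number of contributing levels via an arithmetic intersection computation---is exactly the paper's, and your treatment of the range bound on $k$ matches the paper's argument: the constant $\log(2pN)$ arises because $\Delta \subseteq B_{\sup}(\overline L)$ forces $2\beta\Delta \cap M_k \subseteq \aH(X, \overline L - kY_1 + \overline{\OO}(\log 2\beta))$, and the identity $\acherncl_1(\overline A)^{d-1} \cdot Y_1 = \deg(A_{\QQ}^{d-1}) \log p / \rank H^0(\OO_X)$ is used just as you describe.

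There is, however, a genuine gap in your choice of filtration. You take $M^{(k)} = \langle K^{(k)}\rangle_{\ZZ}$ and $\Delta^{(k)} = \Conv_{\RR}(K^{(k)})$, and then invoke the inclusions $M^{(k+1)} \subseteq (M^{(k)})' := M^{(k)} \cap H^0(X, L-(k+1)Y_1)$ and $\Delta^{(k+1)} \subseteq \Delta^{(k)}$ to ``re-express the kernel term.'' For the upper bound this is legitimate: replacing the subtracted quantity $\log\#\bigl((M^{(k)})' \cap \beta\Delta^{(k)}\bigr)$ by the smaller $\log\#\bigl(M^{(k+1)} \cap \beta\Delta^{(k+1)}\bigr)$ only weakens the inequality. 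But for the companion lower bound the same move goes the wrong way: you would be replacing $-\log\#\bigl((M^{(k)})' \cap (2/\beta)\Delta^{(k)}\bigr)$ by the \emph{larger} number $-\log\#\bigl(M^{(k+1)} \cap (2/\beta)\Delta^{(k+1)}\bigr)$, thus asserting a stronger lower bound than Proposition~\ref{prop:estimate:val:on:Y:1} delivers. So ``the lower bound is handled symmetrically'' fails, and the sum does not telescope legitimately on that side.

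The paper sidesteps this by keeping the convex body fixed: it sets $M_k = \langle K\rangle_{\ZZ} \cap H^0(X, L - kY_1)$ and uses the single $\Delta = \Conv_{\RR}(K)$ at every level. One still has $M_k \cap \Delta = K^{(k)}$, so the basic identity $\#\nu(K\setminus\{0\}) = \sum_k \#\nu_{Y_1}(r_k(M_k\cap\Delta)\setminus\{0\})$ is unchanged; but now $(M_k)' = M_{k+1}$ exactly and $\Delta$ does not move, so Proposition~\ref{prop:estimate:val:on:Y:1} applied with $(M_k,\Delta)$ yields terms that telescope \emph{on the nose} for both bounds, with no inclusion needed. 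With this single adjustment your argument goes through verbatim.
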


\begin{proof}%[The proof of Theorem~\ref{thm:main}]
We set $\beta = p \rank  \langle K \rangle_{\ZZ}$, $\Delta = \Conv_{\RR}(K)$ and $M_k =  \langle K \rangle_{\ZZ} \cap H^0(X, L - k Y_1)$ for $k \geq 0$.
Then $M_0 =  \langle K \rangle_{\ZZ}$, $K = M_0 \cap \Delta$,
$\rank M_k = \rank M_0$ and 
\[
M_{k+1} = M_k \cap H^0(Y, (L - kY_1) - Y_1).
\]
Let
$r_k : H^0(X, L-kY_1) \to H^0(Y_1, \rest{L-kY_1}{Y_1})$
be the natural homomorphism
for each $k \geq 0$.
Note that
\[
\#\nu(K \setminus \{ 0 \}) = \sum_{k \geq 0} \#\nu_{Y_1} \left(r_k(M_k \cap \Delta) \setminus \{ 0 \} \right).
\]
Thus, by applying  Proposition~\ref{prop:estimate:val:on:Y:1}
to $L - kY_1$,
we obtain
\begin{multline*}
\sum_{k\geq 0} \left( \log \#(M_k \cap (1/\beta)\Delta) - \log \#(M_{k+1} \cap (2/\beta) \Delta )\right) \\
\leq
\# \nu(K \setminus \{ 0 \}) \log p \leq \\
\sum_{k\geq 0} \left( \log \#(M_k \cap 2\beta \Delta) - \log \#(M_{k+1} \cap \beta \Delta )\right),
\end{multline*}
which implies
\begin{multline*}
\# \nu(K \setminus \{ 0 \}) \log p
\leq \log \#(M_0 \cap 2\beta\Delta)  \\
+ \sum_{k\geq 1} \left( \log \#(M_k \cap 2\beta\Delta) - \log \#(M_k \cap \beta\Delta )\right)
\end{multline*}
and
\begin{multline*}
\# \nu(K \setminus \{ 0 \}) \log p
\geq \log \#(M_0 \cap (1/\beta)\Delta)  \\
- \sum_{k\geq 1} \left( \log \#(M_k \cap (2/\beta)\Delta ) - \log \#(M_k \cap (1/\beta)\Delta )\right).
\end{multline*}
By \eqref{eqn:lem:conv:module:rest:estimate:3},
\[
\log \#(M_0 \cap 2\beta\Delta)
\leq \log \#(K) +  \log (4\beta)
\rank M_0
\]
and
\[
 \log \#(M_k \cap 2\beta\Delta) - \log \#(M_k \cap \beta\Delta) \\
 \leq
 \log(4) \rank M_k = \log(4) \rank M_0.
\]
Note that
$2\beta\Delta \cap M_k \subseteq \aH(\overline{L} - kY_1 + \overline{\OO}(\log(2\beta)))$
(for the definition of $\overline{\OO}(-)$, see Section~\ref{section:aPIc:Q:R}).
Thus, if we set
\[
S = \left\{ k \geq 1 \mid \aH(\overline{L} - kY_1 + \overline{\OO}(\log(2\beta))) \not = \{ 0 \} \right\},
\]
then
\[
\# \nu(K \setminus \{ 0 \}) \log p \leq  \log \#(K) 
+
\left(  \log(4\beta)+ \#S \log(4) \right)
\rank M_0.
\]
Let $\overline{A}$ be an ample $C^{\infty}$-hermitian invertible sheaf on $X$.
If $k \in S$, then
\begin{align*}
0 & \leq \adeg (\acherncl_1(\overline{A})^{d-1} \cdot \acherncl_1(\overline{L} - kY_1 + \overline{\OO}(\log(2\beta)))) \\
& = \adeg(\acherncl_1(\overline{A})^{d-1} \cdot \acherncl_1(\overline{L})) +
\log(2\beta) \deg(A_{\QQ}^{d-1}) - k \frac{\deg(A_{\QQ}^{d-1}) \log p}{\rank H^0(\OO_X)},
\end{align*}
which implies that
\[
k \leq \frac{(\sigma(\overline{L}) + \log(2\beta))\rank H^0(\OO_X)}{\log p},
\]
and hence
\[
\# S \leq \frac{(\sigma(\overline{L}) + \log(2\beta))\rank H^0(\OO_X)}{\log p}.
\]
Further, by using \eqref{eqn:lem:conv:module:rest:estimate:3}, we can see
\[
\log \#(M_0 \cap (1/\beta)\Delta)
\geq \log \# (K) - \log(2\beta )
\rank M_0
\]
and
\[
\log \#(M_k \cap (2/\beta)\Delta) - \log \#(M_k \cap (1/\beta)\Delta)
\leq  \log(4) \rank M_k =  \log(4) \rank M_0.
\]
Hence, as before, we obtain
\[
\# \nu(K \setminus \{ 0 \}) \log p \geq  \log \#(K) 
-
\left(  \log(2\beta)+ \#S \log(4)  \right) \rank M_0,
\]
as required.
\end{proof}

\setcounter{equation}{0}
\begin{Corollary}
\label{thm:main:cor}
There is a positive constant $c = c(X, \overline{L})$ depending only on $X$ and $\overline{L}$
with the following property:
For a good flag 
\[
Y_{\cdot} : X \supset Y_1 \supset Y_2 \supset \cdots \supset Y_d
\]
over a prime $p$,
there is a positive constant $m_0 = m_0\left(p, X_{\QQ}, L_{\QQ}\right)$ depending only on 
$p$, $X_{\QQ}$  and $L_{\QQ}$ such that,
if $m \geq m_0$, then
\[
\vert \#\nu_{Y_{\cdot}}(K \setminus \{ 0 \}) \log p - \log \#(K) \vert
\leq \frac{cm^d}{\log p}
\]
holds for any arithmetic linear series $K$ of $m\overline{L}$,
where $\nu_{Y_{\cdot}}$ is the valuation attached to the flag
$Y_{\cdot} : X \supset Y_1 \supset Y_2 \supset \cdots \supset Y_d$.
\end{Corollary}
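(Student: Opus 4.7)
The plan is to substitute $m\overline{L}$ into Theorem~\ref{thm:estimate:valuation:vec} and to track how the resulting bound scales with $m$. Concretely, writing $N := \rank \langle K \rangle_{\ZZ}$, Theorem~\ref{thm:estimate:valuation:vec} applied to $K$ as an arithmetic linear series of $m\overline{L}$ gives
\[
\vert \#\nu(K \setminus \{0\}) \log p - \log \#(K) \vert \leq N \left( \log(4pN) + \tfrac{\sigma(m\overline{L}) + \log(2pN)}{\log p} \log(4) \rank H^0(\OO_X) \right).
\]

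Two simplifications reduce this to something of the shape $cm^d/\log p$. First, $\sigma$ is homogeneous of degree one: because $\adeg(\acherncl_1(\overline{A})^{d-1} \cdot \acherncl_1(-))$ is linear, $\sigma(m\overline{L}) = m \sigma(\overline{L})$. Second, since $X_{\QQ}$ is $(d-1)$-dimensional, the classical estimate $\dim_{\QQ} H^0(X_{\QQ}, mL_{\QQ}) = O(m^{d-1})$ furnishes constants $C_1 = C_1(X_{\QQ}, L_{\QQ})$ and $m_1 = m_1(X_{\QQ}, L_{\QQ})$ with $N \leq \rank H^0(X, mL) \leq C_1 m^{d-1}$ whenever $m \geq m_1$. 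Substituting these into the displayed inequality, the right-hand side is bounded above by a leading term proportional to $\sigma(\overline{L})\, m^d / \log p$, together with subleading terms of order $m^{d-1}\log m$, $m^{d-1}\log p$, and $m^{d-1}$.

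To finish, I would choose $c = c(X, \overline{L})$ large enough to absorb $C_1$, $\sigma(\overline{L})$, $\log(4)$ and $\rank H^0(\OO_X)$ into the coefficient of $m^d/\log p$, and then pick $m_0 = m_0(p, X_{\QQ}, L_{\QQ}) \geq m_1$ large enough so that each of the subleading contributions is dominated by a fixed fraction of $cm^d/\log p$. The only step that genuinely forces $m_0$ to depend on $p$ (rather than solely on $X_{\QQ}, L_{\QQ}$) is the comparison $m^{d-1}\log p \leq c m^d/\log p$, which requires $m$ to exceed a constant multiple of $(\log p)^2$. I regard this bookkeeping as the only real obstacle; everything else is a direct consequence of Theorem~\ref{thm:estimate:valuation:vec} together with the homogeneity of $\sigma$ and the generic-fibre dimension count.
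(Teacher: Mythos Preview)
Your proposal is correct and follows essentially the same route as the paper: apply Theorem~\ref{thm:estimate:valuation:vec} to $m\overline{L}$, use $\sigma(m\overline{L}) = m\,\sigma(\overline{L})$ and the generic-fibre bound $\rank H^0(mL) \leq c_1 m^{d-1}$, then separate the leading $m^d/\log p$ contribution from the lower-order terms and absorb the latter by choosing $m_0$ depending on $p$. The paper carries out exactly this bookkeeping, arriving at $c = \bigl(1 + (1 + \sigma(\overline{L})\log 4)\rank H^0(\OO_X)\bigr)c_1$; your identification of the $m^{d-1}\log p$ term as the one forcing $m_0$ to grow with $p$ is also how the dependence arises there.
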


\begin{proof}
The problem is an estimation of $C_m$ given by 
\[
\left( \log\left(4 p \rank H^0(mL) \right) +
\frac{\sigma(m\overline{L}) + \log\left(2 p \rank H^0(mL)\right)}{\log p}  \log(4) \rank H^0(\OO_X)\right)
\rank H^0(mL).
\]
First of all, there is a constant $c_1$ depending only on $X_{\QQ}$ and $L_{\QQ}$ such that
\[
\rank H^0(mL) \leq c_1 m^{d-1}
\]
for all $m \geq 0$.
Thus
\begin{multline*}
C_m \leq \frac{1}{m}
 \left( \log\left(4 p c_1 m^{d-1}\right) \log(p)+ 
\log\left(2 p c_1 m^{d-1} \right)\log(4) \rank H^0(\OO_X)\right) \frac{c_1 m^{d}}{\log p} \\
+
\sigma(\overline{L})\log(4) \rank H^0(\OO_X) \frac{c_1 m^{d}}{\log p}.
\end{multline*}
We can find a positive integer $m_0$ depending only on
$p$ and $c_1$ such that
if $m \geq m_0$, then
\[
\frac{1}{m}
 \left( \log\left(4 p c_1 m^{d-1}\right) \log(p)+ 
\log\left(2 p c_1 m^{d-1} \right)\log(4) \rank H^0(\OO_X)\right) \leq 1 + \rank H^0(\OO_X).
\]
Therefore, 
\[
C_m \leq \left(1 + ( 1 +  \sigma(\overline{L})\log(4))\rank H^0(\OO_X) \right)c_1\frac{m^{d}}{\log p}
\]
for $m \geq m_0$, 
as required.
\end{proof}

As an application of Corollary~\ref{thm:main:cor}, we have the following theorem.
The arithmetic Fujita's 
approximation theorem is a straightforward consequence of this result.

\setcounter{equation}{0}
\begin{Theorem}
\label{thm:approximation:k:fold:sum}
Let $\overline{L}$ be a big continuous hermitian invertible sheaf on 
a projective arithmetic variety $X$.
For any positive $\epsilon$, there is a positive integer $n_0 = n_0(\epsilon)$ such that,
for all $n \geq n_0$,
\[
\liminf_{k\to\infty} \frac{
\log\#(K_{k, n})}{n^dk^d}
\geq \frac{\avol(\overline{L})}{d!} - \epsilon,
\]
where $K_{k, n}$ is the convex lattice hull of
\[
V_{k,n} = \{ s_1 \otimes \cdots \otimes s_k \mid s_1, \ldots, s_k \in \aH(X, n \overline{L})\}
\]
in $H^0(X, knL)$ .
\end{Theorem}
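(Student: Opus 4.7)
The plan is to combine Corollary~\ref{thm:main:cor} with a Minkowski-sum analysis of valuation vectors, following Yuan. Fix a good flag $Y_{\cdot}$ over a prime $p$; such flags exist over infinitely many $p$ by the preceding subsection, so $p$ may be chosen arbitrarily large at the end. Write $\nu$ for the associated valuation and set $S_n := \nu(\aH(X, n\overline{L}) \setminus \{0\}) \subseteq \NN^d$. Since the good flag is regular at the closed point $y$, local parameters $t_1, \ldots, t_d$ can be chosen making $\nu$ additive under tensor products of sections: $\nu(s_1 \otimes \cdots \otimes s_k) = \nu(s_1) + \cdots + \nu(s_k)$. Hence $\nu(V_{k,n} \setminus \{0\}) = k \ast S_n$, and the inclusion $V_{k,n} \subseteq K_{k,n}$ gives $\#\nu(K_{k,n} \setminus \{0\}) \geq \#(k \ast S_n)$.

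Applying Corollary~\ref{thm:main:cor} to the arithmetic linear series $K_{k,n}$ of $(kn)\overline{L}$ yields, for $kn$ sufficiently large,
\[
\log \#(K_{k,n}) \geq \#\nu(K_{k,n} \setminus \{0\}) \log p - \frac{c(kn)^d}{\log p},
\]
with $c = c(X, \overline{L})$ independent of $p, k, n$. Dividing by $n^d k^d$ and taking $\liminf_{k \to \infty}$, together with the combinatorial fact (a variant of Khovanskii's semigroup theorem)
\[
\liminf_{k \to \infty} \frac{\#(k \ast S_n)}{k^d} \geq \vol_d(\Conv_\RR S_n)
\]
(with a sublattice-covolume correction if $S_n$ does not generate $\ZZ^d$), gives
\[
\liminf_{k \to \infty} \frac{\log \#(K_{k,n})}{n^d k^d} \geq \frac{\vol_d(\Conv_\RR S_n)\, \log p}{n^d} - \frac{c}{\log p}.
\]

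To compare $\vol_d(\Conv_\RR S_n)/n^d$ with $\avol(\overline{L})/d!$ for $n$ large, apply Corollary~\ref{thm:main:cor} to the complete linear series $\aH(X, n\overline{L})$:
\[
\#S_n \log p \geq \log \#\aH(X, n\overline{L}) - \frac{cn^d}{\log p}.
\]
The valuation coordinates of sections of $n\overline{L}$ are bounded by $O(n)$, so the elementary lattice-point inequality $\#S_n \leq \vol_d(\Conv_\RR S_n) + O(n^{d-1})$ holds; since $\overline{L}$ is big, $\log \#\aH(X, n\overline{L})/n^d \to \avol(\overline{L})/d!$. Combining,
\[
\liminf_{n \to \infty} \frac{\vol_d(\Conv_\RR S_n)\, \log p}{n^d} \geq \frac{\avol(\overline{L})}{d!} - \frac{c}{\log p}.
\]
Given $\epsilon > 0$, choose first $p$ so large that $2c/\log p < \epsilon/2$, and then $n_0 = n_0(\epsilon, p)$ so that the remaining errors (the boundary term of order $(\log p)/n$ and the $\avol$-convergence error) fall below $\epsilon/2$ for all $n \geq n_0$; this yields the claimed bound.

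The main obstacle is the combinatorial step above. Tracking the covolume of the sublattice generated by $S_n - S_n$ (which need not be all of $\ZZ^d$) requires some care. A cleaner route is to invoke the Lazarsfeld-Musta\c{t}\u{a} Okounkov-body formalism, which packages this Minkowski-sum asymptotic together with the convergence $(1/n)\Conv_\RR S_n \to \Delta(\overline{L})$ to an arithmetic Okounkov body of normalized volume $\avol(\overline{L})/d!$, and bypasses the lattice bookkeeping entirely.
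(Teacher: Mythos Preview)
Your overall strategy matches the paper's: the statement is deferred to Theorem~\ref{thm:vol:arith:rest:linear:system}(3) in the case $Y=X$, whose proof applies Corollary~\ref{thm:main:cor} to $K_{k,n}$, bounds $\#\nu(K_{k,n}\setminus\{0\})$ below by $\#(k\ast S_n)$, and then controls the latter via the Okounkov body --- exactly the ``cleaner route'' you name in your final paragraph.

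The main body of your argument, however, has a genuine gap. The Khovanskii limit is
\[
\lim_{k\to\infty}\frac{\#(k\ast S_n)}{k^d}=\frac{\vol_d(\Conv_{\RR}S_n)}{i_n},
\]
where $i_n$ is the index in $\ZZ^d$ of the lattice generated by $S_n-S_n$; your displayed inequality $\liminf\geq\vol_d(\Conv_{\RR}S_n)$ goes the wrong way whenever $i_n>1$. You flag this with the parenthetical ``sublattice-covolume correction,'' but then proceed as if the uncorrected bound held. Carrying the factor $1/i_n$ through would force you also to bound $\#S_n$ against $\vol_d(\Conv_{\RR}S_n)/i_n$ rather than $\vol_d(\Conv_{\RR}S_n)$, and nothing in your sketch controls $i_n$.

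The paper sidesteps this by never introducing $\Conv_{\RR}S_n$. It works directly with the Okounkov body $\Delta$: \cite[Proposition~3.1]{LazMus} gives $\lim_k\#(k\ast S_n)/(k^dn^d)\geq\vol(\Delta)-\epsilon'$ for all $n\geq n_0$, and the estimate \eqref{eqn:thm:vol:arith:rest:linear:system:1} (obtained in part~(1) via \cite[Proposition~2.1]{LazMus}) compares $\vol(\Delta)\log p$ with $\avol(\overline{L})/d!$. Both results from \cite{LazMus} require the graded semigroup $\{(\nu(s),m)\}$ to generate $\ZZ^{d+1}$; this is supplied by Proposition~\ref{prop:Y:big:span:Okounkov:body}, and it is precisely the ingredient that absorbs the sublattice index issue you left open.
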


\begin{proof}
A generalization of this theorem will be proved in Theorem~\ref{thm:vol:arith:rest:linear:system}.
\end{proof}

\section{Base locus of continuous hermitian invertible sheaf}
Let $X$ be a projective arithmetic variety and
$\overline{L}$ a continuous hermitian invertible sheaf on $X$.
We define the {\em base locus}  $\aBs(\overline{L})$ of $\overline{L}$ to be
\[
\aBs(\overline{L}) = \Supp\left( \Coker\left( \langle \aH(X, \overline{L}) \rangle_{\ZZ} \otimes \OO_X \to L \right) \right),
\]
that is,
\[
\aBs(\overline{L}) = \{ x \in X \mid \text{$s(x) = 0$ for all $s \in \aH(X, \overline{L})$} \}.
\] 
Moreover, the {\em stable base locus} $\aSBs(\overline{L})$ is defined to be
\[
\aSBs(\overline{L}) = \bigcap_{m \geq 1} \aBs(m\overline{L})
\]
The following proposition is the basic properties of $\aBs(\overline{L})$ and $\aSBs(\overline{L})$.

\begin{Proposition}
\label{prop:basic:properties:base:locus}
\begin{enumerate}
\renewcommand{\labelenumi}{\rom{(\arabic{enumi})}}
\item 
$\aBs(\overline{L}+ \overline{M}) \subseteq \aBs(\overline{L})  \cup \aBs(\overline{M})$
for any $\overline{L}, \overline{M} \in \aPic(X;C^0)$.

\item
There is a positive integer $m_0$ such that
$\aSBs(\overline{L}) = \aBs(mm_0 \overline{L})$ for all $m \geq 1$.

\item
$\aSBs(\overline{L}+ \overline{M}) \subseteq \aSBs(\overline{L})  \cup \aSBs(\overline{M})$
for any $\overline{L}, \overline{M} \in \aPic(X;C^0)$.

\item
$\aSBs(\overline{L}) = \aSBs(m\overline{L})$ for all $m \geq 1$.
\end{enumerate}
\end{Proposition}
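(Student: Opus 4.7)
The plan is to handle the four items in order, each reducing to a short manipulation of sections, with the noetherianness of $X$ entering only at part (2).

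For (1), I would argue pointwise: given $x \notin \aBs(\overline{L}) \cup \aBs(\overline{M})$, pick $s \in \aH(X, \overline{L})$ and $t \in \aH(X, \overline{M})$ with $s(x) \neq 0$ and $t(x) \neq 0$. Then $s \otimes t \in H^0(X, L + M)$ satisfies $(s \otimes t)(x) \neq 0$, and since the hermitian metric on $L + M$ is the tensor product metric one has $\Vert s \otimes t \Vert_{\sup} \leq \Vert s \Vert_{\sup} \Vert t \Vert_{\sup} \leq 1$, so $s \otimes t \in \aH(X, \overline{L} + \overline{M})$; hence $x \notin \aBs(\overline{L} + \overline{M})$. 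The same tensoring trick gives the key inclusion $\aBs(mn\overline{L}) \subseteq \aBs(n\overline{L})$ (via $s \mapsto s^{\otimes m}$), which I will use repeatedly.

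For (2) — the main point — I would invoke noetherianness of the underlying topological space of $X$. Consider the collection of closed subsets $\{\aBs(m\overline{L})\}_{m \geq 1}$ and pick a minimal element $\aBs(m_0\overline{L})$. For any $m \geq 1$, the inclusion $\aBs(mm_0\overline{L}) \subseteq \aBs(m_0\overline{L})$ combined with minimality forces $\aBs(mm_0\overline{L}) = \aBs(m_0\overline{L})$. In particular $\aBs(m_0\overline{L}) \subseteq \aBs(m\overline{L})$ for every $m \geq 1$ (apply minimality to $mm_0$), so $\aBs(m_0\overline{L}) \subseteq \bigcap_m \aBs(m\overline{L}) = \aSBs(\overline{L})$, and the reverse inclusion is trivial. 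Thus $\aBs(mm_0\overline{L}) = \aSBs(\overline{L})$ for all $m \geq 1$. The only mild obstacle is making sure $\aBs$ is closed and that the tensoring inclusion is applied cleanly; both are handled by the definition via $\Supp(\Coker(\cdots))$ and by the observation of the previous paragraph.

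For (3) and (4) I would combine (1) and (2). For (3), using (2) choose a single positive integer $N$ such that simultaneously $\aBs(N\overline{L}) = \aSBs(\overline{L})$, $\aBs(N\overline{M}) = \aSBs(\overline{M})$, and $\aBs(N(\overline{L}+\overline{M})) = \aSBs(\overline{L}+\overline{M})$ (take a common multiple of the three stabilizing integers). Then (1) applied to $N\overline{L}$ and $N\overline{M}$ gives
\[
\aSBs(\overline{L}+\overline{M}) = \aBs(N\overline{L} + N\overline{M}) \subseteq \aBs(N\overline{L}) \cup \aBs(N\overline{M}) = \aSBs(\overline{L}) \cup \aSBs(\overline{M}).
\]
For (4), the inclusion $\aSBs(m\overline{L}) \supseteq \aSBs(\overline{L})$ follows because $\{\aBs(km\overline{L})\}_k$ is a subfamily of $\{\aBs(j\overline{L})\}_j$. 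Conversely, applying (2) to $\overline{L}$ gives an $m_0$ with $\aBs(mm_0\overline{L}) = \aSBs(\overline{L})$, and since $\aSBs(m\overline{L}) = \bigcap_k \aBs(km\overline{L}) \subseteq \aBs(m_0 m \overline{L}) = \aSBs(\overline{L})$, equality holds. The hard step throughout is really (2); once that stabilization is secured, (3) and (4) are formal.
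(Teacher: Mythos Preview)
Your proof is correct and follows essentially the same route as the paper. The paper dismisses (1) as ``obvious by its definition,'' phrases the noetherian argument in (2) as a strict-descent statement (if $\aSBs(\overline{L}) \subsetneq \aBs(a\overline{L})$ then some $\aBs(ab\overline{L})$ is strictly smaller) rather than your minimal-element formulation, and derives (3) and (4) from (1) and (2) exactly as you do; these are only cosmetic differences.
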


\begin{proof}
(1) is obvious by its definition.

(2) By using (1), it is sufficient to find a positive integer $m_0$ with
$\aSBs(\overline{L}) = \aBs(m_0 \overline{L})$. Thus it is enough to see that
if $\aSBs(\overline{L}) \subsetneq \aBs(a \overline{L})$, then there is $b$ with
$ \aBs(a b \overline{L}) \subsetneq \aBs(a \overline{L})$.
Indeed, choose $x \in \aBs(a \overline{L}) \setminus \aSBs(\overline{L})$.
Then there is $b$ with $x \not\in \aBs(b \overline{L})$, so that
$x \not\in \aBs(a b \overline{L})$ by (1).

(3) This is a consequence of (1) and (2).

(4) Clearly $\aSBs(\overline{L}) \subseteq \aSBs(m\overline{L})$.
We choose $m_0$ with $\aSBs(\overline{L}) = \aBs(m_0 \overline{L})$.
Then $\aSBs(m\overline{L}) \subseteq \aBs(m_0 m\overline{L}) = \aSBs(\overline{L})$.
\end{proof}

Let $\iota : \aPic(X;C^0) \to \aPic_{\QQ}(X;C^0) (:= \aPic(X;C^0) \otimes \QQ)$ be
the natural homomorphism. For $\overline{L} \in \aPic_{\QQ}(X;C^0)$,
there are a positive integer $n$ and $\overline{M} \in \aPic(X;C^0)$ such that
$\overline{L} = (1/n)\iota(\overline{M})$.
Then, by the above (4), we can see that $\aSBs(\overline{M})$ does not depend on the choice of $n$ and $\overline{M}$,
so that $\aSBs(\overline{L})$ is defined by $\aSBs(\overline{M})$.
The {\em augmented base-locus} $\aSBs_+(\overline{L})$ of $\overline{L}$ is defined to be
\[
\aSBs_+(\overline{L}) = 
\bigcup_{\substack{\overline{A} \in \aPic_{\QQ}(X;C^{\infty}) \\ \text{$\overline{A}$ : ample}}} \aSBs(\overline{L} - \overline{A}).
\]

\begin{Proposition}
\label{prop:augmented:base:locus}
Let $\overline{B}_1, \ldots, \overline{B}_r$ be ample $C^{\infty}$-hermitian $\QQ$-invertible sheaves on $X$.
Then there is a positive number $\epsilon_0$ such that
\[
\aSBs_+(\overline{L}) = \aSBs(\overline{L} - \epsilon_1 \overline{B}_1 - \cdots - \epsilon_r \overline{B}_r)
\]
for all rational numbers $\epsilon_1, \ldots , \epsilon_r$ with
$0 < \epsilon_1\leq \epsilon_0, \ldots , 0 < \epsilon_r \leq \epsilon_0$.
\end{Proposition}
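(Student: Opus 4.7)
The plan is to combine Proposition~\ref{prop:basic:properties:base:locus}(3) with the noetherian property of $X$.

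First, I would establish the following monotonicity: if $\overline{A}', \overline{A}''$ are ample $\QQ$-$C^{\infty}$-hermitian invertible sheaves with $\overline{A}''-\overline{A}'$ ample, then $\aSBs(\overline{L}-\overline{A}')\subseteq\aSBs(\overline{L}-\overline{A}'')$. Indeed, applying Proposition~\ref{prop:basic:properties:base:locus}(3) to $\overline{L}-\overline{A}'=(\overline{L}-\overline{A}'')+(\overline{A}''-\overline{A}')$ gives
\[
\aSBs(\overline{L}-\overline{A}')\subseteq\aSBs(\overline{L}-\overline{A}'')\cup\aSBs(\overline{A}''-\overline{A}'),
\]
and the second summand is empty because any ample $C^{\infty}$-hermitian sheaf has empty stable base locus: the generating property in Conventions and terminology~\ref{CT:Arakelov:positivity} forces $\aBs(m\overline{A})=\emptyset$ for $m$ sufficiently divisible, and Proposition~\ref{prop:basic:properties:base:locus}(4) extends the conclusion to $\QQ$-coefficients.

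Next, because $X$ is a noetherian topological space, the family $\mathcal{F}=\{\aSBs(\overline{L}-\overline{A}) : \overline{A}\text{ is an ample }\QQ\text{-}C^{\infty}\text{-hermitian invertible sheaf}\}$, partially ordered by inclusion, satisfies the descending chain condition. I would observe that $\mathcal{F}$ is downward-directed: given $\overline{A}_1,\overline{A}_2$ ample, pick a rational $\delta>0$ small enough that each $\overline{A}_j-\delta(\overline{B}_1+\cdots+\overline{B}_r)$ is ample (possible because the ample cone is open in $\aPic_{\QQ}(X;C^{\infty})$ by Conventions and terminology~\ref{CT:Arakelov:positivity}); then the monotonicity step exhibits $\aSBs(\overline{L}-\delta(\overline{B}_1+\cdots+\overline{B}_r))$ as a common lower bound for $\aSBs(\overline{L}-\overline{A}_1)$ and $\aSBs(\overline{L}-\overline{A}_2)$. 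Combining downward directedness with DCC, any minimal element $\aSBs(\overline{L}-\overline{A}^*)$ of $\mathcal{F}$ is the unique minimum and coincides with $\aSBs_+(\overline{L})$.

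Finally, the openness of the ample cone furnishes $\epsilon_0>0$ such that $\overline{A}^*-\epsilon_1\overline{B}_1-\cdots-\epsilon_r\overline{B}_r$ is ample for all rationals $0<\epsilon_1\leq\epsilon_0,\ldots,0<\epsilon_r\leq\epsilon_0$. Applying the monotonicity of the first step with $\overline{A}'=\sum_i\epsilon_i\overline{B}_i$ and $\overline{A}''=\overline{A}^*$ then gives $\aSBs(\overline{L}-\sum_i\epsilon_i\overline{B}_i)\subseteq\aSBs(\overline{L}-\overline{A}^*)=\aSBs_+(\overline{L})$, and the reverse inclusion holds because $\sum_i\epsilon_i\overline{B}_i$ is itself an ample sheaf contributing to $\aSBs_+(\overline{L})$.

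The main obstacle is the middle step: one has to verify carefully that downward directedness together with the noetherian minimality really forces the set-theoretic minimum to match the expression defining $\aSBs_+(\overline{L})$ as a variation over all ample $\overline{A}$. Once this identification and the monotonicity are in hand, the proposition follows formally from openness of the ample cone.
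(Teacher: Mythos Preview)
Your proof is correct and follows essentially the same route as the paper: both arguments use the noetherian property of $X$, the monotonicity $\aSBs(\overline{L}-\overline{A}')\subseteq\aSBs(\overline{L}-\overline{A}'')$ obtained from Proposition~\ref{prop:basic:properties:base:locus}(3), and the openness of the ample cone to produce $\epsilon_0$. The only organizational difference is that the paper extracts finitely many $\overline{A}_1,\ldots,\overline{A}_l$ with $\aSBs_+(\overline{L})=\bigcap_i\aSBs(\overline{L}-\overline{A}_i)$ directly from noetherianity and then requires each $\overline{A}_i-\sum_j\epsilon_j\overline{B}_j$ to be ample, whereas you add the directedness observation to collapse this to a single $\overline{A}^*$ realizing the minimum; the extra directedness step is not needed in the paper's version but costs nothing and yields the same conclusion.
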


\begin{proof}
Since $X$ is a noetherian space,
there are ample $C^{\infty}$-hermitian $\QQ$-invertible sheaves $\overline{A}_1, \ldots, \overline{A}_l$ on $X$ such that
$\aSBs_+(\overline{L}) = \bigcap_{i=1}^l \aSBs(\overline{L} - \overline{A}_i)$.
We choose a positive number $\epsilon_0$ such that, for all rational numbers $\epsilon_1, \ldots , \epsilon_r$ with
$0 < \epsilon_1\leq \epsilon_0, \ldots , 0 < \epsilon_r \leq \epsilon_0$,
\[
\overline{A}_i - \epsilon_1 \overline{B}_1 - \cdots - \epsilon_r \overline{B}_r
\]
is ample for every $i = 1, \ldots, l$.
Then, by (2) in Proposition~\ref{prop:basic:properties:base:locus},
\begin{align*}
\aSBs(\overline{L} - \epsilon_1 \overline{B}_1 - \cdots - \epsilon_r \overline{B}_r) &
=
\aSBs(\overline{L} - \overline{A}_i + (\overline{A}_i - \epsilon_1 \overline{B}_1 - \cdots - \epsilon_r \overline{B}_r)) \\
& \subseteq \aSBs(\overline{L} - \overline{A}_i) \cup \aSBs(\overline{A}_i - \epsilon_1 \overline{B}_1 - \cdots - \epsilon_r \overline{B}_r) \\
& = \aSBs(\overline{L} - \overline{A}_i),
\end{align*}
which implies
\[
\aSBs_+(\overline{L}) \subseteq \aSBs(\overline{L} - \epsilon_1 \overline{B}_1 - \cdots - \epsilon_r \overline{B}_r) \subseteq
\bigcap_{i=1}^l \aSBs(\overline{L} - \overline{A}_i) = \aSBs_+(\overline{L}).
\]
\end{proof}

%\setcounter{Theorem}{0}
%\subsection{Arithmetic Picard group and cones}
%\label{subsection:aPIc:Q:R}
\section{Arithmetic Picard group and cones}
\label{section:aPIc:Q:R}
According as \cite{MoExt}, we fix several notations.
Let $X$ be a projective arithmetic variety.
Let $C^0(X)$ be the set of real valued continuous functions $f$ on $X(\CC)$ with
$F^*_{\infty}(f) = f$, where
$F_{\infty}  : X(\CC) \to X(\CC)$ is the complex conjugation map on $X(\CC)$.
Let
$\overline{\OO} : C^0(X) \to \aPic(X;C^0)$ be the homomorphism given by
\[
\overline{\OO}(f) = (\OO_X, \exp(-f)\vert\cdot\vert_{can}).
\]
We define $\aPic_{\QQ}(X;C^0)$ and $\aPic_{\otimes \RR}(X;C^0)$ to be 
\[
\aPic_{\QQ}(X;C^0) := \aPic(X;C^0) \otimes \QQ\quad\text{and}\quad
\aPic_{\otimes \RR}(X;C^0) := \aPic(X;C^0) \otimes \RR.
\]
We denote the natural homomorphism $\aPic(X;C^0) \to \aPic_{\QQ}(X;C^0)$ by $\iota$.
Let $N(X)$ be the subgroup of $\aPic_{\otimes \RR}(X;C^0)$ consisting of elements
\[
\overline{\OO}(f_1) \otimes x_1 + \cdots + \overline{\OO}(f_r) \otimes x_r
\quad(f_1,\ldots,f_r \in C^0(X),\ x_1,\ldots,x_r \in \RR)
\]
with $x_1f_1+\cdots+x_rf_r = 0$.
We define $\aPic_{\RR}(X;C^0)$ to be
\[
\aPic_{\RR}(X;C^0) :=\aPic_{\otimes \RR}(X)/N(X).
\]
Let $\pi : \aPic_{\otimes \RR}(X;C^0)  \to \aPic_{\RR}(X;C^0)$ be the natural homomorphism. 
Here we give the strong topology to $\aPic_{\QQ}(X;C^0)$, $\aPic_{\otimes \RR}(X;C^0)$ and $\aPic_{\RR}(X;C^0)$.
Then the homomorphisms 
\[
\aPic_{\QQ}(X;C^0) \hookrightarrow \aPic_{\otimes \RR}(X;C^0)\quad\text{and}\quad
\pi : \aPic_{\otimes \RR}(X;C^0)  \to \aPic_{\RR}(X;C^0)
\]
are continuous. 
Moreover, $\pi : \aPic_{\otimes \RR}(X;C^0)  \to \aPic_{\RR}(X;C^0)$ is an open map (cf. Conventions and terminology~\ref{CT:strong:topology}).
We denote the composition of homomorphisms
\[
\aPic_{\QQ}(X;C^0) \hookrightarrow
\aPic_{\otimes \RR}(X;C^0)  \overset{\pi}{\longrightarrow} \aPic_{\RR}(X;C^0)
\]
by $\rho$.
Then $\rho$ is also continuous.
Note that $\rho$ is not necessarily injective (cf. \cite[Example~4.5]{MoExt}).

\bigskip
Let $\aAmp(X)$ be the sub-semigroup of $\aPic(X;C^0)$ consisting of all ample $C^{\infty}$-hermitian invertible sheaves on $X$.
Let us observe the following lemma.

\begin{Lemma}
\label{lem:ample:in:ample:f:>=0}
Let $\overline{A}$ be an ample invertible sheaf on $X$. For any $\overline{L} \in \aPic(X;C^0)$,
there are  a positive integer $n_0$ and $f \in C^0(X)$ such that $f \geq 0$ and
\[
\overline{L} + n \overline{A} - \overline{\OO}(f) \in \aAmp(X)
\]
for all $n \geq n_0$.
\end{Lemma}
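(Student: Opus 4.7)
The plan is to reduce to the $C^{\infty}$-hermitian case already covered by the remark in Conventions and terminology~\ref{CT:Arakelov:positivity} by absorbing the non-smoothness of the metric of $\overline{L}$ into an $\overline{\OO}(f)$-term.

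\emph{Step 1 (choosing $f$).} First I would fix any $C^{\infty}$-hermitian metric $h^{\infty}$ on $L$ and write the continuous metric $h_L$ of $\overline{L}$ as $h_L = e^{-\psi} h^{\infty}$. The ratio $e^{-\psi} = h_L/h^{\infty}$ is a strictly positive continuous function on $X(\CC)$, invariant under the complex conjugation map because both $h_L$ and $h^{\infty}$ are; hence $\psi \in C^0(X)$. Since $X(\CC)$ is compact, $\psi$ is bounded on $X(\CC)$, so we may choose a constant $C \in \RR$ with $\psi + C \geq 0$. Set $f := \psi + C$. Then $f \in C^0(X)$, $f \geq 0$, and the metric of $\overline{L} - \overline{\OO}(f)$ equals $e^{f} h_L = e^{C} h^{\infty}$, which is $C^{\infty}$. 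Consequently $\overline{L}' := \overline{L} - \overline{\OO}(f) \in \aPic(X;C^{\infty})$.

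\emph{Step 2 (reducing to the smooth case).} Apply the remark in Conventions and terminology~\ref{CT:Arakelov:positivity} to the ample $\overline{A}$ and to $\overline{L}'$: there is a positive integer $n_0$ such that $n_0 \overline{A} + \overline{L}' \in \aAmp(X)$.

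\emph{Step 3 (conclusion for all $n \geq n_0$).} For any integer $n \geq n_0$,
\[
\overline{L} + n\overline{A} - \overline{\OO}(f)
= \overline{L}' + n\overline{A}
= \bigl(\overline{L}' + n_0 \overline{A}\bigr) + (n - n_0)\overline{A}.
\]
The second summand $(n - n_0)\overline{A}$ is either zero (when $n = n_0$) or an ample $C^{\infty}$-hermitian invertible sheaf, and the sum of two ample $C^{\infty}$-hermitian invertible sheaves is again ample (algebraic ampleness and positivity of the first Chern form are clearly preserved by addition, and generation of $H^0$ by small sections for some large multiple propagates via tensor product of small sections). Hence $\overline{L} + n\overline{A} - \overline{\OO}(f) \in \aAmp(X)$, which is the desired conclusion.

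The construction of $f$ in Step 1 is essentially formal, and Step 2 is an immediate citation; the only mildly technical point is the closure of $\aAmp(X)$ under addition used in Step 3, but this is standard and in the spirit of the paper's conventions. So I do not foresee a real obstacle.
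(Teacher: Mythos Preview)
Your proof is correct and follows essentially the same route as the paper: choose a $C^{\infty}$ metric on $L$, absorb the discrepancy into a nonnegative $f$ so that $\overline{L}-\overline{\OO}(f)$ becomes $C^{\infty}$, and then invoke the remark in Conventions and terminology~\ref{CT:Arakelov:positivity}. The only difference is cosmetic: the paper asserts directly that $(\overline{L}-\overline{\OO}(f))+n\overline{A}\in\aAmp(X)$ for all $n\geq n_0$, while you make the passage from a single $n_0$ to all $n\geq n_0$ explicit via closure of $\aAmp(X)$ under addition.
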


\begin{proof}
Let $\vert\cdot\vert$ be the hermitian metric of $\overline{L}$ and
$\vert\cdot\vert_0$ a $C^{\infty}$-hermitian metric of $L$.
We set $\vert\cdot\vert = \exp(-f_0) \vert\cdot\vert_0$ for some $f_0 \in C^0(X)$.
We can take a constant $c$ with $f_0 +c \geq 0$, and put $f = f_0 + c$.
Then $f \geq 0$ and $\exp(f) \vert\cdot\vert$ is $C^{\infty}$, which means that
$\overline{L} - \overline{\OO}(f)$ is $C^{\infty}$.
Thus there is a positive integer $n_0$ such that
\[
(\overline{L} - \overline{\OO}(f))+ n \overline{A}  \in \aAmp(X)
\]
for all $n \geq n_0$.
\end{proof}

\begin{Proposition}
\label{prop:big:open:etc:arithmetic:Pic}
Let $\widehat{C}$ be a sub-monoid of $\aPic(X;C^0)$ such that
\[
\{ \overline{\OO}(f) \mid f \in C^0(X), \ f \geq 0 \} \subseteq \widehat{C}.
\]
We set $\widehat{B} = \QSat(\aAmp(X) + \widehat{C})$ \rom{(}cf. see Conventions and terminology~\rom{\ref{CT:sub:semigroup:monoid}} for
the saturation\rom{)}. Then we have the following.
\begin{enumerate}
\renewcommand{\labelenumi}{\rom{(\arabic{enumi})}}
\item
$\widehat{B}$ is open, that is,
for any $\overline{L} \in \widehat{B}$ and $\overline{M} \in \aPic(X;C^0)$,
there is a positive integer $n$ such that $n \overline{L} + \overline{M} \in \widehat{B}$.

\item
If we set
\[
\begin{cases}
\widehat{B}_{\QQ} := \Cone_{\QQ}(\iota(\widehat{B})) &  \text{in $\aPic_{\QQ}(X;C^0)$}, \\
\widehat{B}_{\otimes \RR} := \Cone_{\RR}(\widehat{B}_{\QQ}) & \text{in $\aPic_{\otimes \RR}(X;C^0)$}, \\
\widehat{B}_{\RR} := \Cone_{\RR}(\rho(\widehat{B}_{\QQ})) & \text{in $\aPic_{\RR}(X;C^0)$},
\end{cases}
\]
then $\widehat{B}_{\QQ}$, $\widehat{B}_{\otimes \RR}$ and $\widehat{B}_{\RR}$ are open in 
$\aPic_{\QQ}(X;C^0)$, $\aPic_{\otimes \RR}(X;C^0)$ and $\aPic_{\RR}(X;C^0)$ respectively.

\item
\[
\begin{cases}
\iota^{-1}\left( \widehat{B}_{\QQ} \right) = \widehat{B}, \quad \widehat{B}_{\otimes \RR} \cap \aPic_{\QQ}(X;C^0) = \widehat{B}_{\QQ}, \\
\pi^{-1}\left(\widehat{B}_{\RR}\right) = \widehat{B}_{\otimes \RR}, \quad \rho^{-1}\left(\widehat{B}_{\RR}\right)= \widehat{B}_{\QQ}.
\end{cases}
\]
\end{enumerate}
\end{Proposition}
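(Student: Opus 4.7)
The plan is to establish (1), (2), and (3) in sequence, with (3)(c) being the main obstacle.

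For (1), given $\overline{L} \in \widehat{B}$ and $\overline{M} \in \aPic(X;C^0)$, the definition of $\widehat{B}$ gives $m \in \ZZ_{>0}$, $\overline{A} \in \aAmp(X)$, and $\overline{N} \in \widehat{C}$ with $m\overline{L} = \overline{A} + \overline{N}$. Lemma~\ref{lem:ample:in:ample:f:>=0} applied to $\overline{M}$ and $\overline{A}$ produces $n_0 \in \ZZ_{>0}$ and $f \in C^0(X)$ with $f \geq 0$ such that $\overline{M} + n\overline{A} - \overline{\OO}(f) \in \aAmp(X)$ for all $n \geq n_0$; for such $n$,
\[
nm\overline{L} + \overline{M} = \bigl(\overline{M} + n\overline{A} - \overline{\OO}(f)\bigr) + \bigl(n\overline{N} + \overline{\OO}(f)\bigr) \in \aAmp(X) + \widehat{C} \subseteq \widehat{B},
\]
since $\widehat{C}$ is a sub-monoid containing $\overline{\OO}(f)$.

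For (2), (1) combined with Proposition~\ref{prop:QSat}(3) will give that $\widehat{B}_{\QQ} = \Cone_{\QQ}(\iota(\widehat{B}))$ is open in $\aPic_{\QQ}(X;C^0)$, and Proposition~\ref{prop:A:V:open}(2) will then give the openness of $\widehat{B}_{\otimes \RR} = \Cone_{\RR}(\widehat{B}_{\QQ})$ in $\aPic_{\otimes \RR}(X;C^0)$. For $\widehat{B}_{\RR}$, the $\RR$-linearity of $\pi$ and $\rho = \pi \circ (\aPic_{\QQ} \hookrightarrow \aPic_{\otimes \RR})$ yield
\[
\widehat{B}_{\RR} = \Cone_{\RR}(\rho(\widehat{B}_{\QQ})) = \pi\bigl(\Cone_{\RR}(\widehat{B}_{\QQ})\bigr) = \pi(\widehat{B}_{\otimes \RR}),
\]
and openness follows because $\pi$ is an open map in the strong topology.

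For (3), set $A := \aAmp(X) + \widehat{C}$, so $\widehat{B} = \QSat(A)$. The elementary identity $\Cone_{\QQ}(\iota(\QSat(A))) = \Cone_{\QQ}(\iota(A))$ (for $x \in \QSat(A)$ with $nx \in A$, $\iota(x) = (1/n)\iota(nx) \in \Cone_{\QQ}(\iota(A))$ by part (1) of Proposition~\ref{prop:QSat}) will yield $\widehat{B}_{\QQ} = \Cone_{\QQ}(\iota(A))$. Then Proposition~\ref{prop:QSat}(2) gives (a) $\iota^{-1}(\widehat{B}_{\QQ}) = \QSat(A) = \widehat{B}$, and Proposition~\ref{prop:A:V:open}(1) gives (b) $\widehat{B}_{\otimes \RR} \cap \aPic_{\QQ}(X;C^0) = \widehat{B}_{\QQ}$; the identity $\rho^{-1}(\widehat{B}_{\RR}) = \widehat{B}_{\QQ}$ will follow from (b) together with the pending (c) through $\rho = \pi \circ (\aPic_{\QQ} \hookrightarrow \aPic_{\otimes \RR})$.

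The main obstacle is (3)(c), i.e.\ $\pi^{-1}(\widehat{B}_{\RR}) = \widehat{B}_{\otimes \RR}$. By Proposition~\ref{prop:big:open}(2.2) the task reduces to the inclusion $\widehat{B}_{\otimes \RR} + N(X) \subseteq \widehat{B}_{\otimes \RR}$. Given $v \in \widehat{B}_{\otimes \RR}$ and $u = \sum_i \overline{\OO}(f_i) \otimes x_i \in N(X)$ with $\sum_i x_i f_i = 0$, the $\RR$-module structure on $\aPic_{\otimes \RR}(X;C^0)$ lets us rewrite $u = \sum_i x_i(\overline{\OO}(f_i) \otimes 1)$; choosing a constant $c \geq 0$ with $f_i + c \geq 0$ for every $i$ and using $\overline{\OO}(f_i) = \overline{\OO}(f_i+c) - \overline{\OO}(c)$ yields
\[
u = \sum_i x_i\bigl(\overline{\OO}(f_i+c) \otimes 1\bigr) - \bigl(\sum_i x_i\bigr)\bigl(\overline{\OO}(c) \otimes 1\bigr),
\]
in which every $\overline{\OO}$-factor now lies in $\widehat{C}$. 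Writing $\widehat{B}_{\otimes \RR} = \Cone_{\RR}(\iota(\aAmp(X))) + \Cone_{\RR}(\iota(\widehat{C}))$ (via Proposition~\ref{prop:QSat}(4) and Lemma~\ref{lem:cone:sum}) and decomposing $v = v_A + v_N$ accordingly, the plan is to absorb the positive-coefficient terms of $u$ into $v_N$ and the negative-coefficient terms into $v_A$ by invoking Lemma~\ref{lem:ample:in:ample:f:>=0} once more, trading each $-\overline{\OO}(h)$ with $h \geq 0$ against enough copies of a fixed ample $C^{\infty}$-hermitian invertible sheaf. The technical heart of the argument will be bookkeeping the real coefficients so that they all remain positive throughout this absorption.
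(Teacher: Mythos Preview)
Your treatment of (1), (2), and parts (a), (b), (d) of (3) matches the paper's and is correct. The gap is in your plan for (3)(c).

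After your rewriting
\[
u = \sum_i x_i\bigl(\overline{\OO}(f_i+c)\otimes 1\bigr) - \Bigl(\sum_i x_i\Bigr)\bigl(\overline{\OO}(c)\otimes 1\bigr),
\]
the negative-coefficient terms are of the form $-|x_i|\,\iota(\overline{\OO}(h))$ with $h\geq 0$ and $|x_i|\in\RR_{>0}$ \emph{fixed}. You propose to absorb these into $v_A$ via Lemma~\ref{lem:ample:in:ample:f:>=0}, ``trading each $-\overline{\OO}(h)$ against enough copies of a fixed ample sheaf''. But $v_A=\sum_j\lambda_j\iota(\overline{A}_j)$ is fixed; Lemma~\ref{lem:ample:in:ample:f:>=0} only tells you that $n\overline{A}-\overline{\OO}(h)\in\aAmp(X)+\widehat{C}$ for \emph{sufficiently large integer} $n$, and you have no freedom to enlarge the $\lambda_j$. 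Concretely, $\lambda\,\iota(\overline{A})-\mu\,\iota(\overline{\OO}(h))$ need not lie in $\widehat{B}_{\otimes\RR}$ when $\mu/\lambda$ is large, and nothing in your rewriting prevents this. Your decomposition treats the $\overline{\OO}(f_i+c)\otimes 1$ as unrelated elements of $\aPic_{\otimes\RR}(X;C^0)$, so the constraint $\sum_i x_i f_i=0$ --- which is the only thing distinguishing $u\in N(X)$ from an arbitrary real combination --- is no longer doing any work.

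The paper's argument exploits this constraint through an approximation that also uses the openness of $\widehat{B}_{\otimes\RR}$ you already established in (2). One first checks $\widehat{B}_{\otimes\RR}+\iota(\widehat{C})\subseteq\widehat{B}_{\otimes\RR}$ (from $\widehat{B}+\widehat{C}\subseteq\widehat{B}$ via Proposition~\ref{prop:A:V:open}(3)). Then, choosing rational $a_{in}\to x_i$ and setting $\phi_n=\sum_i a_{in}f_i$, the constraint forces $\Vert\phi_n\Vert_{\sup}\to 0$; picking rational $b_n\geq\Vert\phi_n\Vert_{\sup}$ with $b_n\to 0$, the element
\[
x_n=\sum_i\overline{\OO}(f_i)\otimes a_{in}+\overline{\OO}(1)\otimes b_n
\]
equals $\iota(\overline{\OO}(\phi_n+b_n))\in\iota(\widehat{C})$ in $\aPic_{\QQ}(X;C^0)$ and satisfies $x_n\to u$ in $\aPic_{\otimes\RR}(X;C^0)$. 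By openness, $v+(u-x_n)\in\widehat{B}_{\otimes\RR}$ for $n\gg 0$, and then $v+u=\bigl(v+(u-x_n)\bigr)+x_n\in\widehat{B}_{\otimes\RR}+\iota(\widehat{C})\subseteq\widehat{B}_{\otimes\RR}$. The point is that $u$ is approximable by elements of $\iota(\widehat{C})$, not that its individual pieces can be absorbed algebraically.
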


\begin{proof}
(1) Let $\overline{L} \in \widehat{B}$ and $\overline{M} \in \aPic(X;C^0)$.
Then there is a positive integer $n_0$ such that $n_0\overline{L} = \overline{A} + \overline{E}$ for some
$\overline{A} \in \aAmp(X)$ and $\overline{E} \in \widehat{C}$.
By Lemma~\ref{lem:ample:in:ample:f:>=0},
there are  a positive integer $n_1$ and $f \in C^0(X)$ such that $f \geq 0$ and
$\overline{M} + n_1 \overline{A} - \overline{\OO}(f) = \overline{A}'$
for some $\overline{A}' \in \aAmp(X)$.
Then
\[
n_1 n_0 \overline{L} + \overline{M} = n_1(\overline{A} + \overline{E}) + \overline{M} = \overline{A}' + (n_1\overline{E} + \overline{\OO}(f)) \in \widehat{B}.
\]

(2) follows from (3) in Proposition~\ref{prop:QSat},
(2) in Proposition~\ref{prop:A:V:open} and 
(2.1) in Proposition~\ref{prop:big:open}.

(3) 
Let us consider the following claim:

\begin{Claim}
%\label{lem:amp:ker:B}
$\widehat{B}_{\otimes \RR} + N(X) \subseteq \widehat{B}_{\otimes \RR}$.
\end{Claim}

First of all, let us see the following formula:
\addtocounter{Claim}{1}
\begin{equation}
\label{eqn:prop:big:open:etc:arithmetic:Pic:q}
 \widehat{B}_{\otimes \RR} + \iota(\widehat{C}) \subseteq \widehat{B}_{\otimes \RR}.
\end{equation}
Indeed, as $\widehat{B} + \widehat{C} \subseteq \widehat{B}$,
we have $\iota(\widehat{B} + \widehat{C}) \subseteq \iota(\widehat{B})$. Thus,
by (3) in Proposition~\ref{prop:A:V:open}, 
\[
\widehat{B}_{\otimes \RR} + \iota(\widehat{C}) \subseteq \Cone_{\RR}(\iota(\widehat{B})) + \Cone_{\RR}(\iota(\widehat{C}))
= \Cone_{\RR}(\iota(\widehat{B} + \widehat{C})) \subseteq  \widehat{B}_{\otimes \RR}.
\]

Let $a \in \widehat{B}_{\otimes \RR}$ and $x \in N(X)$.
We set $x = \overline{\OO}(f_1)\otimes a_1 + \cdots + \overline{\OO}(f_r) \otimes a_r$ with
$a_1 f_1 + \cdots + a_r f_r = 0$, where $f_1, \ldots, f_r \in C^0(X)$ and
$a_1, \ldots, a_r \in \RR$.
Let us take a sequence $\{ a_{in} \}_{n=1}^{\infty}$ in $\QQ$ such that
$a_i = \lim_{n\to\infty} a_{in}$. We set $\phi_n = a_{1n} f_1 + \cdots + a_{rn}f_r$.
Then
\begin{multline*}
\Vert \phi_n \Vert_{\sup} = \Vert (a_{1n} - a_1) f_1 + \cdots + (a_{rn}-a_r) f_r \Vert_{\sup} \\
\leq
\vert a_{1n} - a_1\vert \Vert f_1  \Vert_{\sup} + \cdots + \vert a_{rn}-a_r\vert  \Vert f_r \Vert_{\sup}.
\end{multline*}
Thus $\lim_{n\to\infty} \Vert \phi_n \Vert_{\sup} = 0$.
We choose a sequence $\{ b_n \}$ in $\QQ$ such that
$b_n \geq  \Vert \phi_n \Vert_{\sup}$ and $\lim_{n\to\infty} b_n = 0$.
Then $\phi_n + b_n \geq 0$.
If we put 
\[
x_n = \overline{\OO}(f_1)\otimes a_{1n} + \cdots + \overline{\OO}(f_r) \otimes a_{rn} + \overline{\OO}(1) \otimes b_n,
\]
then $\lim_{n\to \infty} x_n = x$.
On the other hand, as $x_n = \overline{\OO}(\phi_n + b_n )$ in $\aPic_{\QQ}(X;C^0)$, $x_n \in \iota(\widehat{C})$.
By (2), $\widehat{B}_{\otimes \RR}$ is an open set in $\aPic_{\otimes \RR}(X;C^0)$.
Thus, if $n \gg 1$, then
$(x - x_n) + a \in \widehat{B}_{\otimes \RR}$.
Hence the claim follows because
\[
x + a =  \left( (x - x_n) + a \right) + x_n \in \widehat{B}_{\otimes \RR} + \iota(\widehat{C}) \subseteq \widehat{B}_{\otimes \RR}.
\]
\CQED

The first formula follows from (2) in Proposition~\ref{prop:QSat}.
The second is derived from (1) in Proposition~\ref{prop:A:V:open}. We can see the third by using
(2.2) in Proposition~\ref{prop:big:open} and the above claim.
The last formula follows from the second and the third.
\end{proof}

\section{Big hermitian invertible sheaves with respect to \\an arithmetic subvariety}
Let $X$ be a projective arithmetic variety and 
$Y$ an arithmetic subvariety of $X$, that is, $Y$ is an integral subscheme of $X$ such that $Y$ is flat over $\Spec(\ZZ)$.
A continuous hermitian invertible sheaf $\overline{L}$ is said to be {\em $Y$-effective} (or {\em effective with respect to $Y$})
if there is $s \in \aH(X, \overline{L})$ with $\rest{s}{Y} \not= 0$.
For $\overline{L}_1, \overline{L}_2 \in \aPic(X)$,
if $\overline{L}_1 - \overline{L}_2$ is $Y$-effective, then
we denote it by $\overline{L}_1 \geq_Y \overline{L}_2$.
We define $\aEff(X;Y)$ to be
\[
\aEff(X;Y) := \left\{ \overline{L} \in \aPic(X;C^0) \mid \text{$\overline{L}$ is $Y$-effective}\right\}.
\]
Then it is easy to see the following.
\begin{enumerate}
\renewcommand{\labelenumi}{\rom{(\alph{enumi})}}
\item
$\aEff(X;Y)$ is a sub-monoid of $\aPic(X;C^0)$.

\item
$\{ \overline{\OO}(f)  \mid f \in C^0(X),\ f \geq 0\} \subseteq \aEff(X;Y)$.
\end{enumerate}
Here we define $\aBig(X;Y)$, $\aBig_{\QQ}(X;Y)$, $\aBig_{\otimes \RR}(X;Y)$ and $\aBig_{\RR}(X;Y)$ to be
\[
\begin{cases}
\aBig(X;Y) := \QSat(\aAmp(X) + \aEff(X;Y)), \\
\aBig_{\QQ} (X;Y) := \Cone_{\QQ}(\iota(\aBig(X;Y))), \\
\aBig_{\otimes \RR} (X;Y) := \Cone_{\RR}(\aBig_{\QQ}(X;Y)), \\
\aBig_{\RR} (X;Y) := \Cone_{\RR}(\rho(\aBig_{\QQ}(X;Y))),
\end{cases}
\]
where $\iota$, $\pi$ and $\rho$ are the natural homomorphisms as follows:
\[
\xymatrix{
\aPic(X;C^0)  \ar[r]^(.45){\iota}  \ar[rrd]_{\gamma} & \aPic_{\QQ}(X;C^0) \ar@{^{(}->}[r] \ar[dr]^{\rho} & \aPic_{\otimes \RR}(X;C^0)  \ar[d]^{\pi} \\
& & \aPic_{\RR}(X;C^0) \\
}
\]
For the definition of the saturation, % $\QSat(-)$,
see Conventions and terminology~\ref{CT:sub:semigroup:monoid}.
By Proposition~\ref{prop:big:open:etc:arithmetic:Pic},
$\aBig_{\QQ}(X;Y)$, $\aBig_{\otimes \RR}(X;Y)$ and $\aBig_{\RR}(X;Y)$ are open in 
$\aPic_{\QQ}(X;C^0)$, $\aPic_{\otimes \RR}(X;C^0)$ and $\aPic_{\RR}(X;C^0)$ respectively.
Moreover,
\[
\begin{cases}
\iota^{-1}\left( \aBig_{\QQ}(X;Y) \right) = \aBig(X;Y), \  \aBig_{\otimes \RR}(X;Y)  \cap \aPic_{\QQ}(X;C^0) = \aBig_{\QQ}(X;Y), \\
\pi^{-1}\left(\aBig_{\RR}(X;Y)\right) = \aBig_{\otimes \RR}(X;Y), \  \rho^{-1}\left(\aBig_{\RR}(X;Y) \right)= \aBig_{\QQ}(X;Y).
\end{cases}
\]
A continuous hermitian invertible sheaf $\overline{L}$ on $X$ is said to 
be {\em $Y$-big} (or {\em big with respect to $Y$})
if $\overline{L} \in \aBig(X;Y)$.
In the remaining of this section, we will observe several basic properties of
$Y$-big continuous hermitian invertible sheaves.
Let us begin with the following proposition.

\begin{Proposition}
\begin{enumerate}
\renewcommand{\labelenumi}{\rom{(\arabic{enumi})}}
\item
Let $\overline{L}$ be a continuous hermitian invertible sheaf  on $X$. Then
the following are equivalent:
\begin{enumerate}
\renewcommand{\labelenumii}{\rom{(\arabic{enumi}.\arabic{enumii})}}
\item
$\overline{L}$ is $Y$-big.

\item
For any $\overline{A} \in \aAmp(X)$,
there is a positive integer $n$ with
$n \overline{L} \geq_{Y} \overline{A}$.

\item
$Y \not\subseteq \aSBs_+(\overline{L})$.
\end{enumerate}

\item
If $\overline{L}$ is $Y$-big, then there is a positive integer $m_0$ such that
$m \overline{L}$ is $Y$-effective for all $m \geq m_0$.

\end{enumerate}
\end{Proposition}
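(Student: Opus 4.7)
The proof rests on one auxiliary lemma not explicitly stated in the excerpt: if $\overline{A}$ is an ample $C^{\infty}$-hermitian invertible sheaf on $X$, then $m\overline{A}$ is $Y$-effective for all sufficiently large $m$. I would establish this by combining ampleness (so for large $m$ the module $H^0(X,mA)$ is generated by sections of sup-norm strictly less than one) with the surjectivity of the restriction $H^0(X,mA)\twoheadrightarrow H^0(Y,mA|_Y)$ for large $m$ (Serre vanishing applied to $I_Y\otimes mA$) and the non-vanishing of the target (ampleness of $A|_Y$); any lift of a non-zero section of $H^0(Y,mA|_Y)$, written as a sum of small sections of $mA$, must contain a summand still restricting non-trivially to $Y$.

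For part (1), the direction (1.2)$\Rightarrow$(1.1) is immediate: $n\overline{L}-\overline{A}$ being $Y$-effective places $n\overline{L}$ in $\aAmp(X)+\aEff(X;Y)$, hence $\overline{L}\in\QSat(\aAmp(X)+\aEff(X;Y))=\aBig(X;Y)$. For (1.1)$\Rightarrow$(1.2), I start from a witness $n_0\overline{L}=\overline{A}_0+\overline{E}_0$; given any ample $\overline{A}$, Conventions and terminology~\ref{CT:Arakelov:positivity} produces $p$ with $p\overline{A}_0-\overline{A}$ ample, and the auxiliary lemma then supplies $k$ large enough that both $k(p\overline{A}_0-\overline{A})$ and $(k-1)\overline{A}$ are $Y$-effective, so that the identity
\[
(kpn_0)\overline{L}-\overline{A}=k(p\overline{A}_0-\overline{A})+(k-1)\overline{A}+kp\overline{E}_0
\]
exhibits the left-hand side as a sum of $Y$-effective sheaves. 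For (1.1)$\Leftrightarrow$(1.3) I translate via $\aPic_{\QQ}(X;C^{\infty})$: from $n_0\overline{L}=\overline{A}_0+\overline{E}_0$ one has $\overline{L}-\tfrac{1}{n_0}\iota(\overline{A}_0)=\tfrac{1}{n_0}\iota(\overline{E}_0)$, whose stable base locus avoids $Y$ by $Y$-effectivity of $\overline{E}_0$, and Proposition~\ref{prop:augmented:base:locus} transports this to $Y\not\subseteq\aSBs_+(\overline{L})$; conversely, the same proposition extracts from $Y\not\subseteq\aSBs_+(\overline{L})$ an ample $\overline{A}$ and an integer $m$ with $m(\overline{L}-\overline{A})\in\aEff(X;Y)$, which after clearing denominators places $\overline{L}$ in $\aBig(X;Y)$.

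For part (2), I bootstrap from $n_0\overline{L}=\overline{A}_0+\overline{E}_0$. Multiples of $n_0$ are immediate: for $q$ large, $q\overline{A}_0$ is $Y$-effective (auxiliary lemma), so $qn_0\overline{L}=q\overline{A}_0+q\overline{E}_0\in\aEff(X;Y)$. To fill in the remaining residue classes modulo $n_0$, Lemma~\ref{lem:ample:in:ample:f:>=0} supplies $f\ge 0$ and $k_0$ with $\overline{B}_k:=k\overline{A}_0+\overline{L}-\overline{\OO}(f)$ ample for every $k\ge k_0$, and for such $k$ together with $c$ large enough that $c\overline{B}_k$ is $Y$-effective,
\[
c(kn_0+1)\overline{L}=c\overline{B}_k+c\overline{\OO}(f)+ck\overline{E}_0
\]
is a sum of $Y$-effective sheaves; letting $c$ run through residue classes modulo $n_0$ deposits a $Y$-effective multiple of $\overline{L}$ in every class. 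Since $T:=\{m>0:m\overline{L}\in\aEff(X;Y)\}$ is closed under addition and contains both arbitrarily large multiples of $n_0$ and a representative of every residue class modulo $n_0$, writing any large $m\equiv r\pmod{n_0}$ as $a_r+qn_0$ with $a_r,qn_0\in T$ gives $T\supseteq\{m\ge m_0\}$ for some $m_0$.

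The main obstacle throughout is that the auxiliary lemma produces $Y$-effectivity only for large multiples of an ample sheaf, never for a bare ample; the chain of thresholds in (1.1)$\Rightarrow$(1.2) and the choice of $c$ in part~(2) are engineered precisely to absorb such bare ample factors into sufficiently large multiples where the lemma can be applied.
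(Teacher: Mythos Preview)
Your proof is correct. For part~(1) it is essentially the paper's argument: both of you cycle through the three conditions using the same ingredients (the decomposition $n_0\overline{L}=\overline{A}_0+\overline{E}_0$, the fact that an ample absorbs any given sheaf after a large multiple, and Proposition~\ref{prop:augmented:base:locus} together with Proposition~\ref{prop:basic:properties:base:locus}(2) to pass between $\aSBs_+$ and a concrete $\aBs$). You are more explicit than the paper about the auxiliary lemma that large multiples of an ample sheaf are $Y$-effective; the paper uses this implicitly when it writes ``choose a positive number $n_1$ such that $n_1\overline{B}-\overline{A}$ is $Y$-effective'' and again at the start of~(2).

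For part~(2) your route genuinely differs. The paper's argument is shorter: it first manufactures a single ample $\overline{A}$ such that \emph{both} $\overline{A}$ and $\overline{L}+\overline{A}$ are $Y$-effective, then uses $Y$-bigness to find $a$ with $a\overline{L}-\overline{A}$ $Y$-effective, so that
\[
a\overline{L}=(a\overline{L}-\overline{A})+\overline{A},\qquad (a+1)\overline{L}=(a\overline{L}-\overline{A})+(\overline{L}+\overline{A})
\]
are two \emph{consecutive} $Y$-effective multiples. The elementary numerical-semigroup fact that $\{a,a+1\}$ generates every integer $\geq a^2+a$ then finishes immediately. Your approach via Lemma~\ref{lem:ample:in:ample:f:>=0} and residue classes modulo $n_0$ reaches the same conclusion but with more moving parts (the auxiliary $f$, the family $\overline{B}_k$, the two-parameter choice of $k$ and $c$). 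What your approach buys is that it never needs to arrange $\overline{L}+\overline{A}$ itself to be $Y$-effective, only large multiples of ample sheaves; what the paper's approach buys is brevity, since once consecutive integers are in hand the combinatorics is a one-liner.
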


\begin{proof}
(1) (1.1) $\Longrightarrow$ (1.2) :
There is a positive integer $n$ such that
$n\overline{L} = \overline{B} + \overline{M}$ for some $\overline{B} \in \aAmp(X)$ and $\overline{M} \in \aEff(X;Y)$.
Let $\overline{A}$ be an ample $C^{\infty}$-hermitian invertible sheaf on $X$. % \in \aAmp(X)$.
We choose a positive number $n_1$ such that $n_1\overline{B} - \overline{A}$ is $Y$-effective.
Then
\[
n_1n\overline{L} - \overline{A} = (n_1\overline{B} - \overline{A}) + n_1\overline{M}
\]
is $Y$-effective. %The converse is obvious.

(1.2) $\Longrightarrow$ (1.3) :
For an ample $C^{\infty}$-hermitian invertible  $\overline{A}$ sheaf,
there is a positive integer $n$  such that $n\overline{L} \geq_{Y} \overline{A}$.
Thus there is $s \in \aH(X, n\overline{L} - \overline{A})$ with $\rest{s}{Y} \not= 0$, which means that
$Y \not\subseteq \aBs(n\overline{L} -  \overline{A})$.
Note that
\[
\aBs(n\overline{L} -  \overline{A}) \supseteq \aSBs(n \overline{L} - \overline{A}) = \aSBs(\overline{L} - (1/n)\overline{A})
\supseteq \aSBs_+(\overline{L}).
\]
Hence $Y \not\subseteq \aSBs_+(\overline{L})$.

(1.3) $\Longrightarrow$ (1.1) :
Let $\overline{A}$ be an ample $C^{\infty}$-hermitian invertible sheaf.
Then, by Proposition~\ref{prop:augmented:base:locus}, 
there is a positive number $n$ such that
\[
\aSBs_+(\overline{L}) = \aSBs(\overline{L} - (1/n)\overline{A}) = \aSBs(n \overline{L} - \overline{A}).
\]
Thus, by (2) in Proposition~\ref{prop:basic:properties:base:locus},
we can find a positive integer $m$ such that 
\[
\aSBs_+(\overline{L}) = \aBs(m(n \overline{L} - \overline{A})),
\]
so that
there is $s \in \aH(X, m(n \overline{L} - \overline{A}))$ with $\rest{s}{Y} \not= 0$ because $Y \not\subseteq \aSBs_+(\overline{L})$.
This means that $mn \overline{L} \geq_{Y} m\overline{A}$, as required.

(2)
We choose an ample $C^{\infty}$-hermitian invertible sheaf $\overline{A}$ such that
$\overline{A}$ and $\overline{L} + \overline{A}$ is $Y$-effective.
Moreover, we can take a positive integer $a$ such that $a\overline{L} - \overline{A}$ is $Y$-effective
because $\overline{L}$ is $Y$-big.
Note that $a\overline{L} = (a \overline{L} - \overline{A}) + \overline{A}$ and
$(a+1)\overline{L} = (a \overline{L} - \overline{A}) + (\overline{L} + \overline{A})$.
Thus $a\overline{L}$ and
$(a+1)\overline{L}$ are $Y$-effective.
Let $m$ be an integer with $m \geq a^2 + a$.
We set $m = aq + r$ ($0 \leq r < a$).
Then $q \geq a$, so that there is an integer $b$ with $q = b+r$ and $b > 0$.
Therefore, $m \overline{L}$ is $Y$-effective because
$m \overline{L} = b(a\overline{L}) + r ((a+1)\overline{L})$.
\end{proof}

\begin{Proposition}
\label{prop:Y:big:span:Okounkov:body}
Let $X$ be a projective arithmetic variety, $Y$ a $d'$-dimensional arithmetic subvariety of $X$ and $\overline{L}$ a continuous hermitian invertible sheaf on $X$.
Let $Z_{\cdot} : Z_0 = Y \supset Z_1 \supset Z_2 \supset \cdots \supset Z_{d'}$ be a good flag over a prime $p$ on $Y$.
If $\overline{L}$ is $Y$-big, then
\[
\left\{ \left(\nu_{Z_{\cdot}} \left(\rest{s}{Y}\right), m\right) \mid \text{$s \in \aH(X, m\overline{L})$ and $\rest{s}{Y} \not= 0$}\right\}
\]
generates $\ZZ^{d'+1}$ as a $\ZZ$-module.
\end{Proposition}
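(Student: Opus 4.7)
Let $S$ denote the set in the statement and $G := \langle S\rangle_\ZZ \subseteq \ZZ^{d'+1}$. Submultiplicativity of $\|\cdot\|_{\sup}$ and additivity of $\nu_{Z_\cdot}$ under multiplication of sections show that $S$ is closed under addition, so $G = S - S$. By part~(2) of the preceding proposition, $m\overline{L}$ is $Y$-effective for all sufficiently large $m$, so the $m$-coordinates of elements of $S$ fill a cofinite subset of $\ZZ_{\geq 1}$, whose gcd is $1$. Hence the projection $G \to \ZZ$ onto the last coordinate is surjective, and it remains to show $\ZZ^{d'} \times \{0\} \subseteq G$.

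By (1.2) of the previous proposition, applied to an ample $C^{\infty}$-hermitian invertible sheaf $\overline{A}$ on $X$, one may choose a positive integer $n$ and $t \in \aH(X, n\overline{L} - \overline{A})$ with $\tau := t|_Y \ne 0$. For any $u, u' \in \aH(X, k\overline{A})$ with nonzero restriction to $Y$, the products $t^k u$ and $t^k u'$ lie in $\aH(X, kn\overline{L})$ and their valuations on $Y$ differ by exactly $\nu_{Z_\cdot}(u|_Y) - \nu_{Z_\cdot}(u'|_Y)$. So the claim is reduced to the case in which $\overline{L}$ is replaced by the ample $\overline{A}$.

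For the ample case, one first checks that $\overline{A}|_Y$ is an ample $C^{\infty}$-hermitian invertible sheaf on $Y$: geometric ampleness of $A|_Y$, positivity of $c_1(\overline{A}|_Y)$, and generation of $H^0(Y, kA|_Y)$ by small sections for $k$ large all descend from $X$ via the surjection $H^0(X, kA) \twoheadrightarrow H^0(Y, kA|_Y)$ (for $k \gg 0$) combined with arithmetic ampleness of $\overline{A}$. The claim then reduces to the intrinsic statement on $Y$: for the ample sheaf $\overline{A}|_Y$ and the good flag $Z_\cdot$ on the $d'$-dimensional arithmetic variety $Y$, the semigroup
\[
\Gamma := \{(\nu_{Z_\cdot}(\sigma), k) : \sigma \in \aH(Y, k\overline{A}|_Y) \setminus \{0\}\}
\]
generates $\ZZ^{d'+1}$ as a $\ZZ$-module. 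This is proved by induction on $d'$ along $Z_\cdot$: the last $d'-1$ valuation coordinates are supplied by the inductive hypothesis applied to the ample $\overline{A}|_{Z_1}$ on the $\FF_p$-variety $Z_1$ of dimension $d'-1$ (with its inherited good flag), while the first ($p$-adic) coordinate is realized using that multiplication by $p$ shifts $\nu_1$ by one, together with the arithmetic Hilbert--Samuel growth $\log \#\aH(Y, k\overline{A}|_Y) \sim \avol(\overline{A}|_Y)\, k^{d'}/d'!$, which guarantees sufficiently many small sections at each $p$-adic level.

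The main obstacle is this ample case: one must show that the \emph{small} sections of $k\overline{A}|_Y$ (rather than arbitrary elements of $H^0(Y, kA|_Y)$) already realize enough valuations to span the kernel direction in both the arithmetic and geometric components of the good flag. This requires combining the arithmetic Hilbert--Samuel growth with the combinatorial structure of $Z_\cdot$ and an inductive descent to the $\FF_p$-component $Z_1$, where the classical Lazarsfeld--Musta\c{t}\u{a}--type counting \cite[Lemma~1.3]{LazMus} becomes available.
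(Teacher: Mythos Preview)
Your reduction to the ample case is sound and parallels the paper's strategy: use $Y$-bigness to find $t \in \aH(X, n\overline{L} - \overline{A})$ with $t|_Y \ne 0$, then transport valuation information from sections of $\overline{A}$ into sections of multiples of $\overline{L}$. The problem is your treatment of the ample case itself.

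Your inductive descent from $Y$ to $Z_1$ changes category: $Z_1$ is a projective variety over $\FF_p$, so there is no hermitian structure, no notion of ``small section'', and no arithmetic Hilbert--Samuel formula available there. The inductive hypothesis you invoke simply does not apply. You could try to patch this by proving a separate algebraic spanning statement on $Z_1$ and then lifting, but you would still need to know that the \emph{small} sections of $k\overline{A}$ on $X$ (or $Y$) surject onto enough of $H^0(Z_1, kA|_{Z_1})$ to realize all valuations---and ``generated as a $\ZZ$-module by small sections'' does not imply that small sections themselves hit every element after reduction. The appeal to arithmetic Hilbert--Samuel growth does not obviously close this gap either; it gives a count, not control on which valuation vectors are achieved. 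Separately, asserting that $\overline{A}|_Y$ is ample in the paper's sense requires $c_1(\overline{A}|_Y)$ to be a positive form on $Y(\CC)$, which is delicate if $Y$ is not generically smooth.

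The paper avoids all of this with a single observation you are missing: the ample sheaf $\overline{A}$ is yours to choose, including its metric. The paper first works purely algebraically (Lemmas~\ref{lem:exist:section:zero:non:zero} and~\ref{lem:ample:valuation:vector:standard:basis}): for any ample invertible sheaf $B$ on $X$ and $n \gg 0$, one can find $s_0, s_1, \ldots, s_{d'} \in H^0(X, nB)$ and $t \in H^0(X, nB + L)$ with $\nu_{Z_\cdot}(s_0|_Y) = 0$, $\nu_{Z_\cdot}(s_i|_Y) = e_i$, and $\nu_{Z_\cdot}(t|_Y) = 0$. This is elementary: for each $i$, use ampleness to produce a section of $nB|_{Z_{i-1}}$ vanishing along $Z_i$ but nonzero at the closed point, then lift to $X$. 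Only \emph{after} these finitely many sections are in hand does one choose a $C^\infty$ metric on $B$ making $\overline{B}$ ample and all of $s_0, \ldots, s_{d'}, t$ small (just scale any ample metric). Setting $\overline{A} = n\overline{B}$ and tensoring with a $Y$-effective section $e \in \aH(X, a\overline{L} - \overline{A})$ then yields $(e_i, 0)$ and $(0, \ldots, 0, 1)$ in $G$ directly, with no induction and no Hilbert--Samuel.
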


To prove the above proposition, we need the following two lemmas.

\begin{Lemma}
\label{lem:exist:section:zero:non:zero}
Let $X$ be either 
a projective arithmetic variety or a projective variety over a field.
Let $Z$ be a reduced and irreducible subvariety of codimension $1$ and $x$ a closed point of $Z$.
Let $I$ be the defining ideal sheaf of $Z$.
We assume that $I$ is principal at $x$ \rom{(}it holds if $X$ is regular at $x$\rom{)}.
Let $H$ be an ample invertible sheaf on $X$.
Then there is a positive integer $n_0$ such that, for all $n \geq n_0$, we can find $s \in H^0(X, nH \otimes I)$ such that
$s \not= 0$ in $nH \otimes I \otimes \kappa(x)$, where $\kappa(x)$ is the residue field at $x$.
\end{Lemma}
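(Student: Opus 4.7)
My plan is to reduce the construction of $s$ to Serre vanishing by introducing an auxiliary subideal of $I$ whose cokernel is a one-dimensional skyscraper at $x$. Let $\mathfrak{M}_x \subseteq \OO_X$ denote the ideal sheaf of the closed point $x$, and set $I' := I \cdot \mathfrak{M}_x$, which is a coherent subideal of $I$. Away from $x$ one has $\mathfrak{M}_{x,y} = \OO_{X,y}$, so $I'_y = I_y$; at $x$, writing $I_x = t\,\OO_{X,x}$ with $t$ a non-zero-divisor (since $X$ is integral and $I_x$ is a non-zero prime ideal that is assumed principal), we obtain $I'_x = t\mathfrak{m}_x$ and therefore $(I/I')_x = (t)/t\mathfrak{m}_x \cong \OO_{X,x}/\mathfrak{m}_x = \kappa(x)$. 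Thus $I/I'$ is the skyscraper sheaf $\kappa(x)$.

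Tensoring the resulting short exact sequence $0 \to I' \to I \to \kappa(x) \to 0$ with the invertible sheaf $nH$ preserves exactness and yields
\[
0 \to nH \otimes I' \to nH \otimes I \to \kappa(x) \to 0,
\]
whose long exact cohomology sequence gives
\[
H^0(X, nH \otimes I) \longrightarrow \kappa(x) \longrightarrow H^1(X, nH \otimes I').
\]
A section $s$ of $nH \otimes I$ that maps to a non-zero element of $\kappa(x)$ is precisely a section which is non-vanishing in $(nH \otimes I) \otimes \kappa(x)$: the kernel of the stalk surjection $(nH \otimes I)_x \to \kappa(x)$ coming from the sequence equals $(nH)_x \otimes t\mathfrak{m}_x$, which coincides with $\mathfrak{m}_x (nH \otimes I)_x$, i.e.\ the kernel of the natural fiber reduction.

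It therefore suffices to prove $H^1(X, nH \otimes I') = 0$ for $n$ large, which is Serre's vanishing theorem applied to the coherent sheaf $I'$ and the ample invertible sheaf $H$. This holds whether $X$ is projective over a field or, as in the arithmetic case, projective over the Noetherian base $\Spec(\ZZ)$; taking $n_0$ to be any threshold for this vanishing produces the required section for all $n \geq n_0$. The entire argument hinges on the principality hypothesis, which is exactly what ensures $I/I'$ has length one at $x$; without it the cokernel could be larger and a section of the twisted sheaf could fail to generate the fiber of $nH \otimes I$ at $x$. No substantive obstacle arises beyond verifying this local computation and packaging the Serre vanishing uniformly in the two cases.
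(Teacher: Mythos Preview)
Your argument is correct and is essentially the paper's own proof: the paper tensors the short exact sequence $0 \to m_x \to \OO_X \to \kappa(x) \to 0$ with the sheaf $nH \otimes I$ (which is invertible near $x$, so exactness is preserved) and then applies Serre vanishing to $nH \otimes I \otimes m_x$. Your kernel sheaf $nH \otimes (I \cdot \mathfrak{M}_x)$ is canonically the same as the paper's $nH \otimes I \otimes m_x$ precisely because $I$ is invertible near $x$, so the two presentations differ only in notation.
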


\begin{proof}
Let $m_x$ be the maximal ideal at $x$.
Since $I$ is invertible around $x$,
we have the exact sequence
\[
0 \to nH \otimes I \otimes m_x \to nH \otimes I \to nH \otimes I \otimes\kappa(x) \to 0.
\]
As $H$ is ample, there is a positive integer $n_0$ such that
\[
H^1(X, nH \otimes I \otimes m_x) = 0
\]
for all $n \geq n_0$, which means that $H^0(X, nH \otimes I) \to  nH \otimes I \otimes \kappa(x)$ is surjective, as required.
\end{proof}

\begin{Lemma}
\label{lem:ample:valuation:vector:standard:basis}
Let $X$ be a projective arithmetic variety and $Y$ a $d'$-dimensional arithmetic subvariety of $X$.
Let $Z_{\cdot} : Z_0 = Y \supset Z_1 \supset Z_2 \supset \cdots \supset Z_{d'}$ be a good flag over a prime $p$ on $Y$.
Let $H$ be an ample invertible sheaf on $X$.
Let $e_1, \ldots, e_{d'}$ be the standard basis of $\ZZ^{d'}$.
Then there is a positive integer $n_0$ such that, for all $n \geq n_0$,
we can find $s_1, \ldots, s_{d'} \in H^0(X, nH)$ with $\nu_{Z_{\cdot}}(\rest{s_1}{Y}) = e_1,
\ldots, \nu_{Z_{\cdot}}(\rest{s_{d'}}{Y}) = e_{d'}$.
\end{Lemma}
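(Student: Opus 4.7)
The plan is to produce each $s_i$ by first constructing the desired section on the codimension-one subvariety $Z_{i-1}$, and then lifting it to a global section on $X$ via Serre vanishing.

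Fix a local basis of $nH$ at $y$, so that sections of $nH$ and their restrictions can be identified near $y$ with elements of the corresponding local rings. For each $i \in \{1,\ldots,d'\}$, I would apply Lemma~\ref{lem:exist:section:zero:non:zero} to the projective variety $Z_{i-1}$ (arithmetic when $i=1$; a projective variety over $\FF_p$ when $i\geq 2$, by the good-flag condition that $Z_1$ lies over a prime with residue field $\FF_p$), to the integral codimension-one subvariety $Z_i$ (whose ideal sheaf is principal at $y$ by the flag definition), to the ample invertible sheaf $\rest{H}{Z_{i-1}}$, and to the closed point $y$. This yields a positive integer $n_1^{(i)}$ such that, for every $n \geq n_1^{(i)}$, there exists
\[
\tau_i \in H^0\bigl(Z_{i-1},\, \rest{nH}{Z_{i-1}} \otimes I_{Z_i/Z_{i-1}}\bigr)
\]
whose image in $(\rest{nH}{Z_{i-1}} \otimes I_{Z_i/Z_{i-1}}) \otimes \kappa(y)$ is nonzero. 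In the local trivialization at $y$ this means $\tau_i = u_i t_i$ for some unit $u_i \in \OO_{Z_{i-1},y}^{\times}$ and the local parameter $t_i$ cutting out $Z_i$ inside $Z_{i-1}$.

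Next I would lift each $\tau_i$ all the way to $X$. From the short exact sequence
\[
0 \to nH \otimes I_{Z_{i-1}/X} \to nH \to \rest{nH}{Z_{i-1}} \to 0,
\]
Serre vanishing gives $H^1(X, nH \otimes I_{Z_{i-1}/X}) = 0$ for $n \geq n_2^{(i)}$, so that the restriction $H^0(X, nH) \twoheadrightarrow H^0(Z_{i-1}, \rest{nH}{Z_{i-1}})$ is surjective. Choose $s_i \in H^0(X, nH)$ with $\rest{s_i}{Z_{i-1}} = \tau_i$, and set $n_0 := \max_{i} \max\bigl(n_1^{(i)},\, n_2^{(i)}\bigr)$; then $s_1, \ldots, s_{d'}$ all exist for every $n \geq n_0$.

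It remains to verify that $\nu_{Z_{\cdot}}(\rest{s_i}{Y}) = e_i$. The image of $s_i$ in $\OO_{Z_{i-1},y}$ equals $\tau_i = u_i t_i \neq 0$; since the maps $\OO_{Z_j,y} \twoheadrightarrow \OO_{Z_{i-1},y}$ are surjective for $j < i$, the image of $s_i$ in every $\OO_{Z_j,y}$ with $j < i$ is nonzero too, giving $\nu_j(s_i) = 0$ for all $j < i$. At step $i$, the iterated element $a_i$ of~\ref{C:T:valuation:vector} is exactly $u_i t_i \in t_i\OO_{Z_{i-1},y} \setminus t_i^2\OO_{Z_{i-1},y}$, so $\nu_i(s_i) = 1$; then $a_{i+1} = \rho_i(u_i)$ is a unit of $\OO_{Z_i,y}$, forcing $\nu_j(s_i) = 0$ for every $j > i$. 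The only substantive inputs are Lemma~\ref{lem:exist:section:zero:non:zero} and Serre vanishing; the one subtlety worth watching is that a single lifting step from $Z_{i-1}$ to $X$ is enough, because a nonzero image in $\OO_{Z_{i-1},y}$ automatically propagates back to a nonzero image in each intermediate $\OO_{Z_j,y}$.
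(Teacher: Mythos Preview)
Your proof is correct and follows essentially the same approach as the paper: apply Lemma~\ref{lem:exist:section:zero:non:zero} on each $Z_{i-1}$ to produce a section $\tau_i$ vanishing to exact order one along $Z_i$ at $y$, then lift to $H^0(X,nH)$ via surjectivity of restriction for large $n$. The paper is terser---it simply asserts the surjectivity $H^0(X,nH)\to H^0(Z_i,\rest{nH}{Z_i})$ and says ``it is easy to see'' for the valuation check---while you spell out the Serre-vanishing exact sequence and the iterated computation of $\nu_j$, but the substance is identical.
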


\begin{proof}
First of all, we can find $n'_0$ such that, for all $n \geq n'_0$,
\[
H^0(X, nH) \to H^0(Z_i, \rest{nH}{Z_i})
\]
are surjective for all $i$.
We set $Z_{d'} = \{ z \}$.
For $i =1, \ldots, d'$, let $I_i$ be the defining ideal sheaf of $Z_i$ in $Z_{i-1}$.
Then, by Lemma~\ref{lem:exist:section:zero:non:zero}, there is a positive integer $n'_i$ such that,
for all $n \geq n'_i$, we can find $s'_i \in H^0(Z_i, \rest{nH}{Z_{i-1}} \otimes I_i)$ such that
$s'_i \not= 0$ in $\rest{nH}{Z_{i-1}} \otimes I_i \otimes \kappa(z)$.
Thus, if $n \geq \max\{ n'_0, n'_1, \ldots, n'_{d'}\}$,
then there are $s_1, \ldots, s_{d'} \in H^0(X, nH)$ such that
$\rest{s_i}{Y_{i-1}} = s'_i$ for $i=1, \ldots, d'$.
By our construction, it is easy to see that
$\nu_{Z_{\cdot}}(\rest{s_i}{Y}) = e_i$.
\end{proof}

\begin{proof}[The proof of Proposition~\ref{prop:Y:big:span:Okounkov:body}]
Let us begin with the following claim:

\begin{Claim}
There are an ample  $C^{\infty}$-hermitian invertible sheaf $\overline{A}$ and
$s_0, s_1, \ldots, s_{d'} \in \aH(X, \overline{A})\setminus \{ 0\}$ and $t \in \aH(X, \overline{A} + \overline{L})\setminus \{ 0\}$
such that 
\[
\rest{s_0}{Y} \not= 0, \rest{s_1}{Y} \not= 0, \ldots, \rest{s_{d'}}{Y} \not= 0, \rest{t}{Y} \not= 0
\]
and
\[
\nu_{Z_{\cdot}}(\rest{s_0}{Y}) = 0,\ \nu_{Z_{\cdot}}(\rest{s_1}{Y}) = e_1,\  \ldots,\  \nu_{Z_{\cdot}}(\rest{s_{d'}}{Y}) = e_{d'}
\ \text{and}\ 
\nu_{Z_{\cdot}}(\rest{t}{Y}) = 0.
\]
\end{Claim}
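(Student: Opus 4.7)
The plan is to construct, for an ample $C^{\infty}$-hermitian invertible sheaf $\overline{H}$ to be fixed shortly, algebraic sections of $nH$ and $nH+L$ realising the required valuation vectors under restriction to $Y$, and only afterwards to twist by $\overline{\OO}(c)$ for $c$ sufficiently large so as to enforce the sup-norm constraint while preserving ampleness of the resulting $\overline{A}$.

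Fix an ample $C^{\infty}$-hermitian invertible sheaf $\overline{H}$ on $X$. First I would take $n$ large enough that Lemma~\ref{lem:ample:valuation:vector:standard:basis} applies and supplies sections $s_1, \ldots, s_{d'} \in H^0(X, nH)$ with $\nu_{Z_{\cdot}}(\rest{s_i}{Y}) = e_i$. Enlarging $n$ further, Serre vanishing ensures that $H^0(X, nH) \to H^0(Y, \rest{nH}{Y})$ is surjective and that $\rest{nH}{Y}$, being ample on $Y$, admits a global section not vanishing at the rational point $z = Z_{d'}$; any lift of such a section gives $s_0 \in H^0(X, nH)$ with $\nu_{Z_{\cdot}}(\rest{s_0}{Y}) = 0$.

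For $t$, I would invoke Lemma~\ref{lem:ample:in:ample:f:>=0} applied to $\overline{L}$ to produce $f \in C^0(X)$ with $f \geq 0$ and an integer $n_0$ such that $\overline{L} + n\overline{H} - \overline{\OO}(f) \in \aAmp(X)$ for all $n \geq n_0$. In particular the underlying invertible sheaf $L + nH$ is algebraically ample, so, after possibly enlarging $n$, the restriction $H^0(X, L+nH) \to H^0(Y, \rest{(L+nH)}{Y})$ is surjective and the ample restricted sheaf $\rest{(L+nH)}{Y}$ has a section not vanishing at $z$; lifting yields $t \in H^0(X, L + nH)$ with $\nu_{Z_{\cdot}}(\rest{t}{Y}) = 0$.

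Finally I would fix $n$ large enough for all of the above, choose $c \in \ZZ_{\geq 0}$ with $e^c \geq \max_i \Vert s_i \Vert_{\sup}$ and $e^c \geq \Vert t \Vert_{\sup}$, and set $\overline{A} = n\overline{H} + \overline{\OO}(c)$. Since adding $\overline{\OO}(c)$ leaves both the underlying invertible sheaf and the first Chern form $c_1(n\overline{H})$ unchanged and only shrinks sup norms, $\overline{A}$ is still ample (the small-section generation condition is preserved), and the $s_i$ and $t$, viewed as sections of $\overline{A}$ and $\overline{A} + \overline{L}$, then have sup norm $\leq 1$. The main obstacle is the construction of $t$: one must pass from $Y$-bigness of $\overline{L}$, which a priori provides no concrete section in a prescribed line bundle, to the existence of an algebraic section of $L + nH$ not vanishing at $z$ along $Y$, and this is exactly what Lemma~\ref{lem:ample:in:ample:f:>=0} together with the standard Serre-vanishing surjectivity and ampleness on $Y$ deliver.
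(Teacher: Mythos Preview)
Your argument is correct and follows essentially the same strategy as the paper: first produce the algebraic sections with the required valuation vectors using Lemma~\ref{lem:ample:valuation:vector:standard:basis} (and its proof technique for $s_0$ and $t$), then rescale the metric to force the sup-norm bounds while keeping ampleness. The paper phrases the final step as ``choose a $C^{\infty}$-hermitian metric on $B$ such that $\overline{B}$ is ample and the sections are small,'' which amounts to exactly your twist $\overline{A} = n\overline{H} + \overline{\OO}(c)$.

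Two minor remarks. First, your appeal to Lemma~\ref{lem:ample:in:ample:f:>=0} is unnecessary: the only fact you extract from it is that $L+nH$ is algebraically ample for $n$ large, and this is immediate from the algebraic ampleness of $H$ alone. Second, the claim itself does not use the $Y$-bigness of $\overline{L}$ at all (that hypothesis enters only afterwards, to produce the section $e \in \aH(X, a\overline{L} - \overline{A})$), so your closing sentence misidentifies where the difficulty lies --- the construction of $t$ is no harder than that of $s_0$.
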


Let $B$ be an ample invertible sheaf on $X$.
By Lemma~\ref{lem:ample:valuation:vector:standard:basis},
there are positive integer $n$,
$s_0, s_1, \ldots, s_d \in H^0(X, nB)\setminus \{ 0\}$ and $t \in H^0(X, nB + L)\setminus \{ 0\}$ such that
\[
\nu_{Z_{\cdot}}(\rest{s_0}{Y}) = 0,\ \nu_{Z_{\cdot}}(\rest{s_1}{Y}) = e_1,\  \ldots,\  \nu_{Z_{\cdot}}(\rest{s_d}{Y}) = e_d
\ \text{and}\ 
\nu_{Z_{\cdot}}(\rest{t}{Y}) = 0.
\]
We choose a $C^{\infty}$-hermitian metric of $B$ such that
$\overline{B}$ is ample,
$s_0, s_1, \ldots, s_d \in \aH(X, n\overline{B})$ and $t \in \aH(X, n\overline{B} + \overline{L})$.
\CQED

Let $M$ be the $\ZZ$-submodule generated by
\[
\{ \left(\nu_{Z_{\cdot}}(\rest{s}{Y}), m\right) \mid \text{$s \in \aH(X, m\overline{L})$ and $\rest{s}{Y} \not= 0$}\}.
\]
Since $\overline{L}$ is $Y$-big, there is a positive integer $a$ with $a \overline{L} \geq_Y \overline{A}$, that is,
there is $e \in \aH(X, a \overline{L} - \overline{A})$ with $\rest{e}{Y} \not= 0$.
Note that 
\[
t \otimes e \in \aH(X, (a + 1)\overline{L})\quad\text{and}\quad
s_0 \otimes e \in \aH(X, a\overline{L}).
\]
Moreover $\nu_{Z_{\cdot}}\left(\rest{t \otimes e}{Y}\right) = \nu_{Z_{\cdot}}(\rest{e}{Y})$ and
$\nu_{Z_{\cdot}}(\rest{s_0 \otimes e}{Y}) = \nu_{Z_{\cdot}}(\rest{e}{Y})$.
Thus 
\[
\left(\nu_{Z_{\cdot}}\left(\rest{t \otimes e}{Y}\right), a+1\right) - \left(\nu_{Z_{\cdot}}(\rest{s_0 \otimes e}{Y}), a\right) = (0, \ldots, 0, 1) \in M.
\]
Further,  as  $s_i \otimes e, s_0 \otimes e \in \aH(X, a \overline{L})$, we obtain
\begin{multline*}
\left(\nu_{Z_{\cdot}}\left(\rest{s_i  \otimes e}{Y}\right), m\right) - (\nu_{Z_{\cdot}}(\rest{s_0 \otimes e}{Y}), m) \\
= (e_i + \nu_{Z_{\cdot}}(\rest{e}{Y}), m) - (\nu_{Z_{\cdot}}(\rest{e}{Y}), m) = (e_i, 0) \in M.
\end{multline*}
Hence $M = \ZZ^{d+1}$.
\end{proof}

\section{Arithmetic restricted volume}

\newcommand{\anony}{\bullet}%{\bullet}%{\texttt{*}}

Let $X$ be a projective arithmetic variety and $Y$ a $d'$-dimensional arithmetic subvariety  of $X$.
For an invertible sheaf $L$ on $X$,
$\Image(H^0(X, L) \to H^0(Y, \rest{L}{Y}))$ is denoted by $H^0(X|Y, L)$.
We assign an arithmetic linear series %convex lattice 
$\aH_{\anony}(X|Y, \overline{L})$ of $\rest{\overline{L}}{Y}$ %in $H^0(X|Y, L)$ 
to each continuous hermitian invertible sheaf $\overline{L}$ on $X$
with the following properties:
\begin{enumerate}
\renewcommand{\labelenumi}{\rom{(\arabic{enumi})}}
\item
$\Image ( \aH(X, \overline{L}) \to H^0(X|Y, L)) \subseteq \aH_{\anony}(X|Y, \overline{L})$. 

\item 
$s \otimes s' \in \aH_{\anony}(X|Y, \overline{L}+\overline{M})$ for all $s \in \aH_{\anony}(X|Y, \overline{L})$ and $s' \in \aH_{\anony}(X|Y, \overline{M})$.
\end{enumerate}
This correspondence $\overline{L} \mapsto \aH_{\anony}(X|Y, \overline{L})$ is called an 
{\em assignment of arithmetic restricted linear series from $X$ to $Y$}.
As examples, we have the following:

\medskip
$\bullet$
$\aH_{\CL}(X|Y, \overline{L})$ :
$\aH_{\CL}(X|Y, \overline{L})$ is the convex lattice hull of
\[
\Image(\aH(X, \overline{L}) \to H^0(X|Y, L))
\]
in $H^0(X|Y, L)$.
This is actually an assignment of arithmetic restricted linear series from $X$ to $Y$.
The above property (1) is obvious. For (2),
let $s_1, \ldots, s_r \in \Image(\aH(X, \overline{L}) \to H^0(X|Y, L))$ and
$s'_1, \ldots, s'_{r'} \in \Image(\aH(X, \overline{M}) \to H^0(X|Y, M))$, and let
\[
\lambda_1, \ldots, \lambda_r\quad\text{and}\quad\lambda'_1, \ldots, \lambda'_{r'}
\]
be non-negative real numbers
with $\lambda_1 + \cdots + \lambda_r = 1$ and $\lambda'_1 + \cdots + \lambda'_{r'} = 1$.
Then
\[
(\lambda_1 s_1 + \cdots + \lambda_r s_r) \otimes (\lambda'_1 s'_1 + \cdots + \lambda'_{r'} s'_{r'}) =
\sum_{i,j} \lambda_i \lambda'_j (s_i \otimes s_j)
\]
and
\[
 \sum_{i,j} \lambda_i \lambda'_j  = (\lambda_1 + \cdots + \lambda_r) (\lambda'_1 + \cdots + \lambda'_{r'}) = 1,
\]
as required.

\medskip
$\bullet$
$\aH_{\quot}(X|Y, \overline{L})$ :
Let $\Vert\cdot\Vert^{X|Y}_{\sup,\quot}$ be the quotient norm of $H^0(X|Y, L)$ induced by
the norm $\Vert\cdot\Vert_{\sup}$ on $H^0(X, L)$ and the natural surjective homomorphism
$H^0(X, L) \to H^0(X|Y, L)$. Then $\aH_{\quot}(X|Y, \overline{L})$ is defined to be
\[
\aH_{\quot}(X|Y, \overline{L}) = \{ s \in H^0(X|Y, L) \mid \Vert s \Vert^{X|Y}_{\sup,\quot} \leq 1 \}.
\]
This is obviously an assignment of arithmetic restricted linear series from $X$ to $Y$.

\medskip
$\bullet$
$\aH_{\sub}(X|Y, \overline{L})$ : 
Let $\Vert \cdot\Vert_{Y,\sup}$ be the norm on $H^0(Y, \rest{L}{Y})$ given by
$\Vert s \Vert_{Y,\sup} = \sup_{y \in Y(\CC)} \vert s\vert(y)$. Let
$\Vert \cdot\Vert^{X|Y}_{\sup,\sub}$ be the sub-norm of $H^0(X|Y, L)$ induced by
$\Vert \cdot\Vert_{Y,\sup}$ on $H^0(Y, \rest{L}{Y})$ and the natural injective homomorphism
$H^0(X|Y, L) \hookrightarrow H^0(Y, \rest{L}{Y})$. Then $\aH_{\sub}(X|Y, \overline{L})$ is defined to be
\[
\aH_{\sub}(X|Y, \overline{L}) = \left\{ s \in H^0(X|Y, L) \mid \Vert s \Vert^{X|Y}_{\sup,\sub} \leq 1 \right\}.
\]
This is obviously an assignment of arithmetic restricted linear series from $X$ to $Y$.

\medskip
Note that
\[
\aH_{\CL}(X|Y, \overline{L}) \subseteq \aH_{\quot}(X|Y, \overline{L}) \subseteq \aH_{\sub}(X|Y, \overline{L})
\]
for any continuous hermitian invertible sheaf $\overline{L}$.
An assignment $\overline{L} \mapsto \aH_{\anony}(X|Y, \overline{L})$ of arithmetic restricted linear series from $X$ to $Y$ is said to be
{\em proper} if, for each $\overline{L} \in \aPic(X,C^0)$,
there is a symmetric and bounded convex set $\Delta$ in $H^0(X|Y, L) \otimes \RR$ such that
$\aH_{\anony}(X|Y, \overline{L} + \overline{\OO}(\lambda)) = H^0(X|Y, L) \cap \exp(\lambda) \Delta$ for all $\lambda \in \RR$.
For example, the assignments $\overline{L} \mapsto \aH_{\quot}(X|Y, \overline{L})$ and
$\overline{L} \mapsto \aH_{\sub}(X|Y, \overline{L})$ are proper.

Let us fix an assignment $\overline{L} \mapsto \aH_{\anony}(X|Y, \overline{L})$ of arithmetic restricted linear series from $X$ to $Y$.
Then we define the restricted arithmetic volume with respect to the assignment to be
\[
\avol_{\anony}\left(X|Y, \overline{L}\right) := \limsup_{m\to\infty} \frac{\log \# \aH_{\anony}(X|Y, m\overline{L})}{m^{d'}/d'!}.
\]
Let us begin with the following proposition.

\begin{Proposition}
\label{prop:basic:properties:rest:vol}
\begin{enumerate}
\renewcommand{\labelenumi}{\rom{(\arabic{enumi})}}
\item
If $\overline{L} \leq_Y \overline{M}$, then
$\# \aH_{\anony}(X|Y, \overline{L}) \leq \# \aH_{\anony}(X|Y, \overline{M})$.
In particular, $\avol_{\anony}\left(X|Y, \overline{L}\right) \leq \avol_{\anony}\left(X|Y, \overline{M} \right)$.

\item
We assume that the assignment $\overline{L} \mapsto \aH_{\anony}(X|Y, \overline{L})$ is proper.
Then, for any $\overline{L} \in \aPic(X;C^0)$ and $f \in C^0(X)$,
\[
\left| \avol_{\anony}\left(X|Y, \overline{L} + \overline{\OO}(f) \right) - \avol_{\anony}\left(X|Y, \overline{L}\right) \right| \leq
d' \vol(X_{\QQ}|Y_{\QQ}, L_{\QQ}) \Vert f \Vert_{\sup},
\]
where $\vol(X_{\QQ}|Y_{\QQ}, L_{\QQ})$ is the algebraic restricted volume \rom{(}cf. \cite{ELMNP}\rom{)}.
\end{enumerate}
\end{Proposition}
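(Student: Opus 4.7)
Choose $s \in \aH(X, \overline{M}-\overline{L})$ with $\rest{s}{Y} \neq 0$ witnessing $\overline{L} \leq_Y \overline{M}$. By property (1) of the assignment, $\rest{s}{Y} \in \aH_{\anony}(X|Y, \overline{M}-\overline{L})$, so property (2) tells us that the multiplication map
\[
\mu_s : \aH_{\anony}(X|Y, \overline{L}) \to \aH_{\anony}(X|Y, \overline{M}),\qquad t \mapsto t \otimes \rest{s}{Y},
\]
is well defined. Since $Y$ is integral, $H^0(Y, \rest{M}{Y})$ embeds into the stalk at the generic point, so multiplication by the nonzero section $\rest{s}{Y}$ is injective; hence $\mu_s$ is injective. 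This yields the first inequality. For the statement on volumes, $s^{\otimes m}$ witnesses $m\overline{L} \leq_Y m\overline{M}$ (using that $\Vert s^{\otimes m}\Vert_{\sup} \leq 1$ and that $Y$ is integral to see $\rest{s^{\otimes m}}{Y} \neq 0$), so the pointwise inequality of counts holds for every $m$, and $\limsup$ gives the volume inequality.

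\textbf{Plan for (2).} Put $t := \Vert f \Vert_{\sup}$. Since $f+t \geq 0$ and $t-f \geq 0$, the constant section $1$ of $\OO_X$ realizes both $\overline{\OO}(f+t)$ and $\overline{\OO}(t-f)$ as $Y$-effective, hence
\[
\overline{L}+\overline{\OO}(-t) \leq_Y \overline{L}+\overline{\OO}(f) \leq_Y \overline{L}+\overline{\OO}(t),
\]
and after scaling by $m$ the same sandwich holds for $m(\overline{L}+\overline{\OO}(f)) = m\overline{L}+\overline{\OO}(mf)$. By part (1),
\[
\#\aH_{\anony}(X|Y, m\overline{L}+\overline{\OO}(-mt)) \leq \#\aH_{\anony}(X|Y, m(\overline{L}+\overline{\OO}(f))) \leq \#\aH_{\anony}(X|Y, m\overline{L}+\overline{\OO}(mt)).
\]
Now invoke properness applied to $m\overline{L}$: there is a bounded symmetric convex $\Delta_m \subseteq H^0(X|Y, mL)\otimes\RR$ with $\aH_{\anony}(X|Y, m\overline{L}+\overline{\OO}(\lambda)) = H^0(X|Y, mL) \cap \exp(\lambda)\Delta_m$ for every $\lambda \in \RR$.

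\textbf{The key estimate.} Apply \eqref{eqn:lem:conv:module:rest:estimate:3} twice with $a = \exp(mt)$: once comparing $\Delta_m$ with $\exp(mt)\Delta_m$ and once comparing $\exp(-mt)\Delta_m$ with $\Delta_m$ (after rescaling). Using $\log\lceil 2\exp(mt)\rceil \leq mt + \log 3$, we obtain
\[
\bigl|\log\#\aH_{\anony}(X|Y, m(\overline{L}+\overline{\OO}(f))) - \log\#\aH_{\anony}(X|Y, m\overline{L})\bigr| \leq (mt + \log 3)\,\rank H^0(X|Y, mL).
\]
Divide by $m^{d'}/d'!$ and take $\limsup$. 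Since $\rank H^0(X|Y, mL) \sim \vol(X_{\QQ}|Y_{\QQ}, L_{\QQ})\, m^{d'-1}/(d'-1)!$ as $m\to\infty$, we have
\[
\lim_{m\to\infty} \frac{(mt+\log 3)\,\rank H^0(X|Y, mL)}{m^{d'}/d'!} = t\cdot d'\cdot \vol(X_{\QQ}|Y_{\QQ}, L_{\QQ}),
\]
and standard $\limsup$ manipulations (using that the error term converges) yield the claimed bound.

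\textbf{Main obstacle.} The proof is conceptually straightforward once properness is used correctly; the only subtle point is the dimension counting in the final step, specifically ensuring that the factor $mt$ on the right is tamed by the fact that $\rank H^0(X|Y, mL)$ grows like $m^{d'-1}$ rather than $m^{d'}$, producing exactly the Lipschitz constant $d'\vol(X_{\QQ}|Y_{\QQ},L_{\QQ})$ after dividing by $m^{d'}/d'!$. The use of properness is essential here: for a general assignment one cannot express $\aH_{\anony}(X|Y, m\overline{L}+\overline{\OO}(\lambda))$ as a uniform scaling of a single convex body, and the lattice-point estimate \eqref{eqn:lem:conv:module:rest:estimate:3} would not apply cleanly.
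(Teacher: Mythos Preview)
Your proof is correct and follows essentially the same route as the paper: part~(1) via the injective multiplication map $t \mapsto t \otimes \rest{s}{Y}$, and part~(2) by sandwiching $f$ between the constants $\pm\Vert f\Vert_{\sup}$, invoking properness to write the relevant linear series as $H^0(X|Y,mL)\cap \exp(\lambda)\Delta_m$, and applying \eqref{eqn:lem:conv:module:rest:estimate:3}. One cosmetic point: your ``$\sim$'' should really be a $\limsup$ (the restricted volume is only defined as a $\limsup$ in general, and the error term need not converge), but the $\limsup$ subadditivity you invoke is exactly what is needed, so the argument goes through unchanged.
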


\begin{proof}
(1) Let us choose $t \in \aH(X, \overline{M} -\overline{L})$ with $\rest{t}{Y} \not= 0$.
Then $\rest{t}{Y} \in \aH_{\anony}(X|Y, \overline{M}-\overline{L})$ and
\[
s \otimes (\rest{t}{Y}) \in \aH_{\anony}(X|Y, \overline{M})
\]
for any $s \in \aH_{\anony}(X|Y, \overline{L})$, which means that
we have the injective map 
\[
\aH_{\anony}(X|Y, \overline{L}) \to \aH_{\anony}(X|Y,  \overline{M})
\]
given by
$s \mapsto s \otimes (\rest{t}{Y})$. Thus (1) follows.

\medskip
(2)
First let us see that
\addtocounter{Claim}{1}
\begin{equation}
\label{eqn:prop:basic:properties:rest:vol:1}
\left| \avol_{\anony}\left(X|Y, \overline{L} + \overline{\OO}(\lambda) \right) - \avol_{\anony}\left(X|Y, \overline{L}\right) \right| \leq
d' \vol(X_{\QQ}|Y_{\QQ}, L_{\QQ}) \vert \lambda  \vert.
\end{equation}
for any $\overline{L} \in \aPic(X;C^0)$ and $\lambda \in \RR$.
Without loss of generality, we may assume that $\lambda \geq 0$. As the assignment is proper,
for each $m \geq 1$, there is a symmetric and bounded convex set $\Delta_m$ such that
\[
 \aH_{\anony}(X|Y, m\overline{L} + \overline{\OO}(\mu)) = H^0(X|Y, mL) \cap \exp(\mu) \Delta_m
\]
for all $\mu \in \RR$. Thus, by using Lemma~\ref{lem:conv:module:rest:estimate},
\begin{multline*}
0 \leq \log \# \aH_{\anony}(X|Y, m(\overline{L} + \overline{\OO}(\lambda)))  - \log \# \aH_{\anony}(X|Y, m\overline{L}) \\
= \log \# (H^0(X|Y, m L) \cap \exp(m \lambda) \Delta_m) - \log \# (H^0(X|Y, L) \cap \Delta_m)\\
\leq  \log (\lceil 2 \exp(m \lambda) \rceil) \dim_{\QQ} H^0(X_{\QQ}|Y_{\QQ}, m L_{\QQ}),
\end{multline*}
which implies \eqref{eqn:prop:basic:properties:rest:vol:1}.

For $f \in C^0(X)$, if we set $\lambda = \Vert f \Vert_{\sup}$, then $-\lambda \leq f \leq \lambda$.
Thus the proposition follows from \eqref{eqn:prop:basic:properties:rest:vol:1}.
\end{proof}

The following theorem is the main result of this section.

\begin{Theorem}
\label{thm:vol:arith:rest:linear:system}
\begin{enumerate}
\renewcommand{\labelenumi}{\rom{(\arabic{enumi})}}
\item
If $\overline{L}$ is $Y$-big, then
\[
\avol_{\anony}\left(X|Y, \overline{L}\right) = \lim_{m\to\infty} \frac{\log \# \aH_{\anony}(X|Y, m\overline{L})}{m^{d'}/d'!}.
\]
In particular, if $\overline{L}$ is $Y$-big, then
$\avol_{\anony}\left(X|Y, n \overline{L}\right) = n^{d'}\avol_{\anony}\left(X|Y, \overline{L}\right)$ for all non-negative integers $n$.

\item
If $\overline{L}$ and $\overline{M}$ are $Y$-big continuous hermitian invertible sheaves on $X$, then
\[
\avol_{\anony}\left(X|Y, \overline{L}+\overline{M}\right)^{\frac{1}{d'}}
\geq \avol_{\anony}\left(X|Y, \overline{L}\right)^{\frac{1}{d'}} +
\avol_{\anony}\left(X|Y, \overline{M}\right)^{\frac{1}{d'}}.
\]

\item
If $\overline{L}$ is $Y$-big, then,
for any positive $\epsilon$, there is a positive integer $n_0 = n_0(\epsilon)$ such that,
for all $n \geq n_0$,
\[
\liminf_{k\to\infty} \frac{
\log\#(K_{k, n})}{n^{d'}k^{d'}}
\geq \frac{\avol_{\anony}(X|Y,\overline{L})}{d'!} - \epsilon,
\]
where $K_{k, n}$ is the convex lattice hull of
\[
V_{k,n} = \{ s_1 \otimes \cdots \otimes s_k \mid s_1, \ldots, s_k \in \aH_{\anony}(X|Y, n \overline{L})\}
\]
in $H^0(X|Y, knL)$ .
\end{enumerate}
\end{Theorem}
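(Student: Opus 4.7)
The strategy is to establish (3) first, deduce (1) from (3) by an interpolation argument, and obtain (2) via a Brunn--Minkowski argument on Okounkov-type bodies. Throughout, by Subsection~\ref{subsection:good:flag:over:prime}, fix a good flag $Z_\cdot : Y = Z_0 \supset Z_1 \supset \cdots \supset Z_{d'}$ on $Y$ over a prime $p$; write $\nu = \nu_{Z_\cdot}$ and, for each $n \geq 1$, set
\[
S_n := \nu(\aH_\anony(X|Y, n\overline{L}) \setminus \{0\}) \subseteq \ZZ^{d'}.
\]
Property (2) of the assignment gives $S_n + S_m \subseteq S_{n+m}$, so $\bigcup_n (S_n \times \{n\}) \subseteq \ZZ^{d'+1}$ is a sub-semigroup; combined with property (1) and Proposition~\ref{prop:Y:big:span:Okounkov:body}, this semigroup generates $\ZZ^{d'+1}$ as a group whenever $\overline{L}$ is $Y$-big, so $S_n$ spans $\ZZ^{d'}$ as an affine lattice for all sufficiently large $n$.

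For (3), apply Theorem~\ref{thm:estimate:valuation:vec} to the arithmetic linear series $K_{k,n}$ of $kn\rest{\overline{L}}{Y}$. Since $V_{k,n} \subseteq K_{k,n}$ and $\nu(V_{k,n}) = k \ast S_n$, the bound $\rank \langle K_{k,n} \rangle_{\ZZ} = O((kn)^{d'-1})$ yields
\[
\log \#(K_{k,n}) \;\geq\; \#(k \ast S_n) \log p \;-\; O\!\left(\frac{(kn)^{d'}}{\log p}\right).
\]
By the Khovanskii-type lemma for finite subsets of $\ZZ^{d'}$ (valid once $S_n$ spans $\ZZ^{d'}$ affinely), $\lim_{k \to \infty} \#(k \ast S_n)/k^{d'} = d'!\, \vol(\Conv_{\RR}(S_n))$. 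Dividing by $n^{d'}$ gives a main term $d'!\, \vol(\Conv_{\RR}(S_n/n)) \log p$; by the standard Okounkov-body convergence together with Corollary~\ref{thm:main:cor} applied at the single level $n$ (matching $\#S_n \log p$ with $\log \#\aH_\anony(X|Y, n\overline{L})$ up to lower-order terms), this tends to $\avol_\anony(X|Y, \overline{L})/d'!$ as $n \to \infty$. Taking $p$ large absorbs the residual $O(1/\log p)$ into $\epsilon$.

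For (1), since $K_{k,n_0} \subseteq \aH_\anony(X|Y, kn_0 \overline{L})$ (a convex lattice), (3) at fixed large $n_0$ yields $\liminf_k \log\#\aH_\anony(X|Y, kn_0 \overline{L})/(kn_0)^{d'} \geq \avol_\anony(X|Y,\overline{L})/d'! - \epsilon$. To interpolate to general $m = kn_0 + r$, fix a $Y$-effective section $t \in \aH(X, M\overline{L})$ of a large fixed multiple and inject $\aH_\anony(X|Y, kn_0\overline{L}) \hookrightarrow \aH_\anony(X|Y, (kn_0+M)\overline{L})$ via $s \mapsto s \otimes \rest{t}{Y}$; combined with the monotonicity in Proposition~\ref{prop:basic:properties:rest:vol}(1), this gives $\liminf_m \log\#\aH_\anony(X|Y, m\overline{L})/m^{d'} \geq \avol_\anony(X|Y,\overline{L})/d'! - \epsilon$ for every $\epsilon > 0$, matching the $\limsup$ and yielding the limit. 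Homogeneity $\avol_\anony(X|Y, n\overline{L}) = n^{d'}\avol_\anony(X|Y,\overline{L})$ then follows from the limit formulation.

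For (2), property (2) yields $S_n^{\overline{L}} + S_n^{\overline{M}} \subseteq S_n^{\overline{L}+\overline{M}}$; rescaling by $1/n$ and sending $n \to \infty$ produces Okounkov bodies $\Delta^{\overline{L}}, \Delta^{\overline{M}}$ with $\Delta^{\overline{L}} + \Delta^{\overline{M}} \subseteq \Delta^{\overline{L}+\overline{M}}$, each a convex body whose $d'$-dimensional volume is proportional (via (1)) to the corresponding $\avol_\anony$. The classical Brunn--Minkowski inequality then yields the desired $d'$-th root concavity. The principal obstacle lies in (3): the three parameters $p, n, k$ must be balanced in the right order -- $p$ chosen large enough that the Theorem~\ref{thm:estimate:valuation:vec} error $O(1/\log p)$ is negligible, then $n$ large enough both for the Okounkov-body approximation and for the affine-span hypothesis of the Khovanskii lemma, only then letting $k \to \infty$.
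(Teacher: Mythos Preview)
Your argument is correct in substance and uses the same three tools as the paper: the Okounkov body $\Delta$ together with the convergence $\#S_m/m^{d'}\to\vol(\Delta)$ from \cite[Proposition~2.1]{LazMus}, the uniform estimate of Corollary~\ref{thm:main:cor} comparing $\#\nu(K\setminus\{0\})\log p$ with $\log\#K$, and Brunn--Minkowski. The difference is purely organizational. The paper proves (1) first, directly: Corollary~\ref{thm:main:cor} applied to the complete series $\aH_{\anony}(X|Y,m\overline{L})$ gives
\[
\vol(\Delta)\log p - \frac{c}{\log p}\;\le\;\liminf_m\;\le\;\limsup_m\;\le\;\vol(\Delta)\log p + \frac{c}{\log p},
\]
so letting $p\to\infty$ yields the limit at once, and records the key relation $\bigl|\avol_{\anony}(X|Y,\overline{L})/d'! - \vol(\Delta)\log p\bigr|\le c/\log p$. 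Parts (2) and (3) then follow immediately from this relation, Brunn--Minkowski, and \cite[Proposition~3.1]{LazMus} respectively. Your route establishes (3) first, but the phrase ``this tends to $\avol_{\anony}(X|Y,\overline{L})/d'!$'' already \emph{is} the above display---you are matching $\#S_n\log p$ with $\log\#\aH_{\anony}$ via Corollary~\ref{thm:main:cor} and using $\#S_n/n^{d'}\to\vol(\Delta)$, which is exactly the paper's proof of (1). Your subsequent interpolation argument to recover (1) from (3) is therefore redundant: you have already proved (1) inside (3). Two small slips: the Khovanskii lemma gives $\lim_k\#(k\ast S_n)/k^{d'}=\vol(\Conv_{\RR}(S_n))$ without a $d'!$ factor, and the passage from $\vol(\Conv_{\RR}(S_n/n))$ to $\vol(\Delta)$ is precisely \cite[Proposition~3.1]{LazMus}, not the ``standard'' convergence of \cite[Proposition~2.1]{LazMus}.
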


\begin{proof}
Let $Z_{\cdot} : Z_0 = Y \supset Z_1 \supset Z_2 \supset \cdots \supset Z_{d'}$ be a good flag over a prime $p$ on $Y$.

\medskip
(1)
Let $\Delta$ be the closure of
\[
\bigcup_{m=1}^{\infty} \frac{1}{m} \nu_{Z_{\cdot}}(\aH_{\anony}(X|Y, m\overline{L}) \setminus \{ 0 \})
\]
in $\RR^{d'}$.
Then, by Proposition~\ref{prop:Y:big:span:Okounkov:body}, \cite[Lemma~2.4]{YuanVol} and \cite[Proposition~2.1]{LazMus},
\[
\vol(\Delta) = \lim_{m\to\infty} \frac{ \#\nu_{Z_{\cdot}}(\aH_{\anony}(X|Y, m\overline{L}) \setminus \{ 0 \})}{m^{d'}}.
\]
By Corollary~\ref{thm:main:cor},
there is a constant $c$ depending only on $\overline{L}$ such that
\begin{multline*}
 \nu_{Z_{\cdot}}(\aH_{\anony}(X|Y, m\overline{L}) \setminus \{ 0 \}) \log p -  \frac{cm^{d'}}{\log p}
 \leq  \log \#\aH_{\anony}(X|Y, m\overline{L})  \\
 \leq \nu_{Z_{\cdot}}(\aH_{\anony}(X|Y, m\overline{L}) \setminus \{ 0 \}) \log p + \frac{cm^{d'}}{\log p},
\end{multline*}
which implies that
\begin{multline*}
\vol(\Delta) \log p -  \frac{c}{\log p} \leq
\liminf_{m\to\infty} \frac{\log \#\aH_{\anony}(X|Y, m\overline{L})}{m^{d'}} \\
\leq \limsup_{m\to\infty} \frac{\log \#\aH_{\anony}(X|Y, m\overline{L})}{m^{d'}}
\leq \vol(\Delta) \log p +  \frac{c}{\log p}.
\end{multline*}
Hence
\[
\limsup_{m\to\infty} \frac{\log \#\aH_{\anony}(X|Y, m\overline{L})}{m^{d'}} - \liminf_{m\to\infty} \frac{\log \#\aH_{\anony}(X|Y, m\overline{L})}{m^{d'}} \leq \frac{2c}{\log p}.
\]
Thus, as $p$ goes to $\infty$,
we have
\[
\limsup_{m\to\infty} \frac{\log \#\aH_{\anony}(X|Y, m\overline{L})}{m^{d'}} = \liminf_{m\to\infty} \frac{\log \#\aH_{\anony}(X|Y, m\overline{L})}{m^{d'}}.
\]
Moreover, we can see that
\addtocounter{Claim}{1}
\begin{equation}
\label{eqn:thm:vol:arith:rest:linear:system:1}
\left|
\avol_{\anony}\left(X|Y, \overline{L}\right) - \vol(\Delta) d'! \log p \right|
\leq \frac{cd'!}{\log p}.
\end{equation}

\medskip
(2)
Let $\Delta'$ and $\Delta''$ be the closure of
\[
\bigcup_{m=1}^{\infty} \frac{1}{m} \nu_{Z_{\cdot}}(\aH_{\anony}(X|Y, m\overline{M}) \setminus \{ 0 \})\quad\text{and}\quad
\bigcup_{m=1}^{\infty} \frac{1}{m} \nu_{Z_{\cdot}}(\aH_{\anony}(X|Y, m(\overline{L}+\overline{M})) \setminus \{ 0 \})
\]
in $\RR^{d'}$.
Since 
\begin{multline*}
\nu_{Z_{\cdot}}(\aH_{\anony}(X|Y, m\overline{L}) \setminus \{ 0 \}) + \nu_{Z_{\cdot}}(\aH_{\anony}(X|Y, m\overline{M}) \setminus \{ 0 \}) \\
=
\{ \nu_{Z_{\cdot}}( s \otimes s') \mid s \in \aH_{\anony}(X|Y, m\overline{L})\setminus\{ 0 \},\  s' \in \aH_{\anony}(X|Y, m\overline{M}) \setminus \{ 0 \} \} \\
\subseteq \nu_{Z_{\cdot}}(\aH_{\anony}(X|Y, m(\overline{L}+\overline{M})) \setminus \{ 0 \}),
\end{multline*}
we have $\Delta + \Delta' \subseteq \Delta''$.
Thus, by Brunn-Minkowski's theorem,
\[
\vol(\Delta'')^{\frac{1}{d'}} \geq \vol(\Delta + \Delta')^{\frac{1}{d'}}  \geq \vol(\Delta)^{\frac{1}{d'}} 
+ \vol(\Delta')^{\frac{1}{d'}}.
\]
Note that \eqref{eqn:thm:vol:arith:rest:linear:system:1} also holds for $\overline{L}$ and
$\overline{L} + \overline{M}$ with another constants $c'$ and $c''$.
Hence, for a small positive number $\epsilon$, if $p$ is a sufficiently large prime number, then
\begin{align*}
&\left|
\avol_{\anony}\left(X|Y, \overline{L}\right)- \vol(\Delta) d'! \log p \right| \leq \epsilon, \\
&\left|
\avol_{\anony}\left(X|Y, \overline{M}\right) - \vol(\Delta') d'! \log p \right| \leq \epsilon \quad\text{and}\\
&\left|
\avol_{\anony}\left(X|Y, \overline{L}+\overline{M}\right)- \vol(\Delta'') d'! \log p \right| \leq \epsilon
\end{align*}
hold. Therefore,
\[
\left(\avol_{\anony}\left(X|Y, \overline{L}+\overline{M}\right)+ \epsilon  \right)^{\frac{1}{d'}}
\geq \left( \avol_{\anony}\left(X|Y, \overline{L}\right)  - \epsilon \right)^{\frac{1}{d'}}
+ \left( \avol_{\anony}\left(X|Y, \overline{M}\right) - \epsilon \right)^{\frac{1}{d'}},
\]
as required.

\medskip
(3)
Let $c$ be a constant for $Y$ and $\rest{\overline{L}}{Y}$
as in Corollary~\ref{thm:main:cor}.
We choose a good flag 
$Z_{\cdot} : Z_0 = Y \supset Z_1 \supset Z_2 \supset \cdots \supset Z_{d'}$
over a prime $p$ with $c/(\log p) \leq \epsilon/3$.
Let $\epsilon'$ be a positive number with $\epsilon' \log p \leq \epsilon/3$.
By \cite[Proposition~3.1]{LazMus},
there is a positive integer $n_0$ such that
\[
\lim_{k \to \infty} \frac{\#(k \ast \nu(\aH_{\anony}(X|Y, n\overline{L})\setminus \{ 0 \}))}{k^{d'}n^{d'}} \geq
\vol(\Delta) - \epsilon'
\]
for all $n \geq n_0$.
Note that 
\[
\nu(K_{k,n}\setminus \{ 0 \}) \supseteq k \ast \nu(\aH_{\anony}(X|Y,n\overline{L})\setminus \{ 0 \})
\]
and
\[
 \log \#(K_{k,n}) \geq
 \# \nu(K_{k,n}\setminus \{ 0 \}) \log p - (\epsilon/3)k^{d'}n^{d'}
\]
by Corollary~\ref{thm:main:cor} for $k \gg 1$.
Thus
\[
\frac{ \log \#(K_{k,n})}{k^{d'}n^{d'}} \geq
\frac{\#(k \ast \nu(\aH_{\anony}(X|Y,n\overline{L})\setminus \{ 0 \}))\log p}{k^{d'}n^{d'}} - \epsilon/3,
\]
which implies that
\begin{align*}
\liminf_{k \to\infty} \frac{ \log \#(K_{k,n})}{k^{d'}n^{d'}}
& \geq 
\lim_{k\to\infty} \frac{\#(k \ast \nu(\aH_{\anony}(X|Y,n\overline{L})\setminus \{ 0 \}))\log p}{k^{d'}n^{d'}} - \epsilon/3 \\
& \geq (\vol(\Delta) - \epsilon')\log p - \epsilon/3 \\
& \geq \vol(\Delta) \log p - 2\epsilon/3.
\end{align*}
Moreover, by \eqref{eqn:thm:vol:arith:rest:linear:system:1},
\[
\vol(\Delta) \log p \geq \frac{\avol_{\anony}(X|Y,\overline{L})}{d'!} - \epsilon/3.
\]
Thus we obtain (3).
\end{proof}

In the remaining of this section, let us consider consequences of Theorem~\ref{thm:vol:arith:rest:linear:system}.

\begin{Corollary}
\label{cor:thm:vol:arith:rest:linear:system:otimes:R}
There is a unique %positive valued 
continuous function 
\[
\avol'_{\anony}(X|Y,-) : \aBig_{\otimes \RR}(X;Y) \to \RR
\]
with the following properties:
\begin{enumerate}
\renewcommand{\labelenumi}{\rom{(\arabic{enumi})}}
\item
$\avol'_{\anony}(X|Y,\iota(\overline{L})) = \avol_{\anony}(X|Y,\overline{L})$
holds for all $\overline{L} \in \aBig(X;Y)$.

\item
$\avol'_{\anony}(X|Y,\lambda x) = \lambda^{d'} \avol'_{\anony}(X|Y,x)$
holds
for all $\lambda \in \RR_{>0}$ and $x \in \aBig_{\otimes \RR}(X;Y)$.

\item
$\avol'_{\anony}\left(X|Y, x + y\right)^{\frac{1}{d'}}
\geq \avol'_{\anony}\left(X|Y, x \right)^{\frac{1}{d'}} +
\avol'_{\anony}\left(X|Y, y \right)^{\frac{1}{d'}}$ holds
for all $x, y \in \aBig_{\otimes \RR}(X;Y)$.
\end{enumerate}
\end{Corollary}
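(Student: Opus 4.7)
The plan is to extend $\avol_{\anony}(X|Y,-)$ first from $\aBig(X;Y)$ to $\aBig_{\QQ}(X;Y)$ by positive homogeneity and then from $\aBig_{\QQ}(X;Y)$ to $\aBig_{\otimes \RR}(X;Y)$ via the continuous extension of concave functions provided by Corollary~\ref{cor:concave:extension}, carrying out the extension at the level of the $1/d'$-th root.

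First I apply Proposition~\ref{prop:QSat}(5) with $A = \aBig(X;Y)$ and $e = d'$: since Theorem~\ref{thm:vol:arith:rest:linear:system}(1) gives $\avol_{\anony}(X|Y, n\overline{L}) = n^{d'} \avol_{\anony}(X|Y,\overline{L})$ for every positive integer $n$ and every $\overline{L} \in \aBig(X;Y)$, there is a unique function $\tilde{v} : \aBig_{\QQ}(X;Y) \to \RR$ satisfying $\tilde{v}\circ\iota = \avol_{\anony}(X|Y,-)$ and $\tilde{v}(\lambda x) = \lambda^{d'} \tilde{v}(x)$ for all $\lambda \in \QQ_{>0}$ and $x \in \aBig_{\QQ}(X;Y)$. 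Then I check that $g := \tilde{v}^{1/d'}$ is concave over $\QQ$ on $\aBig_{\QQ}(X;Y)$: given $x,y\in \aBig_{\QQ}(X;Y)$ and $t\in\QQ\cap[0,1]$, I use Proposition~\ref{prop:QSat}(1) to write $x = (1/n)\iota(\overline{L})$ and $y = (1/n)\iota(\overline{M})$ for a common positive integer $n$ and some $\overline{L},\overline{M}\in\aBig(X;Y)$, then choose a positive integer $m$ so that $a := mt$ and $b := m(1-t)$ are non-negative integers. For $0<t<1$ both $a\overline{L}$ and $b\overline{M}$ lie in $\aBig(X;Y)$, so Theorem~\ref{thm:vol:arith:rest:linear:system}(2) applies, and the $d'$-homogeneity of $\tilde{v}$ converts its conclusion into the required inequality $g(tx + (1-t)y) \geq t g(x) + (1-t) g(y)$; the boundary cases $t\in\{0,1\}$ are trivial.

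By Proposition~\ref{prop:big:open:etc:arithmetic:Pic}(2) the set $\aBig_{\otimes \RR}(X;Y)$ is open in $\aPic_{\otimes \RR}(X;C^0)$, and it is convex because every cone is convex; its intersection with $\aPic_{\QQ}(X;C^0)$ is exactly $\aBig_{\QQ}(X;Y)$ by Proposition~\ref{prop:big:open:etc:arithmetic:Pic}(3). Corollary~\ref{cor:concave:extension} therefore supplies a unique concave continuous extension $\tilde{g} : \aBig_{\otimes \RR}(X;Y) \to \RR$ of $g$, and I set $\avol'_{\anony}(X|Y,-) := \tilde{g}^{d'}$. Property (1) holds by construction, (3) follows directly from concavity of $\tilde{g}$, and for (2) the $\QQ$-homogeneity of $g$ extends to $\RR$-homogeneity of $\tilde{g}$ by the following density argument: for $x \in \aBig_{\otimes \RR}(X;Y)$ and $\lambda \in \RR_{>0}$, choose a finite-dimensional $\QQ$-subspace $P' \subseteq \aPic_{\QQ}(X;C^0)$ such that both $x$ and $\lambda x$ lie in $P' \otimes_{\QQ} \RR$, approximate $(x,\lambda)$ by pairs $(x_n,\lambda_n) \in \aBig_{\QQ}(X;Y) \times \QQ_{>0}$ inside this finite-dimensional slice (where rational points are dense in the usual topology), and pass to the limit using continuity of $\tilde{g}$. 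Uniqueness is immediate from the same density-and-continuity argument together with properties (1) and (2).

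The step I expect to require the most care is the $\QQ$-concavity of $g$: Theorem~\ref{thm:vol:arith:rest:linear:system}(2) is stated only for elements of $\aBig(X;Y) \subseteq \aPic(X;C^0)$, so one must faithfully reduce a rational convex combination in $\aBig_{\QQ}(X;Y)$ to an integral Brunn--Minkowski inequality by clearing denominators without losing track of the $d'$-th power scaling coming from $\tilde{v}$. Once that bookkeeping is in place, the rest is a routine application of Corollary~\ref{cor:concave:extension} and the strong-topology density of rational points in each finite-dimensional $\RR$-slice of $\aPic_{\otimes \RR}(X;C^0)$.
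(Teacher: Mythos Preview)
Your proposal is correct and follows precisely the route the paper intends: the paper's own proof is the single line ``It follows from Theorem~\ref{thm:vol:arith:rest:linear:system}, Proposition~\ref{prop:QSat} and Corollary~\ref{cor:concave:extension},'' and you have simply unpacked those three citations in the expected way. The only cosmetic point is that your derivation of (3) from concavity tacitly uses the homogeneity $\tilde{g}(2z)=2\tilde{g}(z)$, which you have available from the $\QQ$-homogeneity of $g$ and continuity, so the argument is complete as written.
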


\begin{proof}
It follows from Theorem~\ref{thm:vol:arith:rest:linear:system}, Proposition~\ref{prop:QSat} and
Corollary~\ref{cor:concave:extension}.
\end{proof}

\begin{Corollary}
\label{cor:thm:vol:arith:rest:linear:system:R}
If the assignment $\overline{L} \mapsto \aH_{\anony}(X|Y, \overline{L})$ is proper, then
there is a unique %positive valued 
continuous function 
\[
\avol''_{\anony}(X|Y,-) : \aBig_{\RR}(X;Y) \to \RR
\]
with the following properties:
\begin{enumerate}
\renewcommand{\labelenumi}{\rom{(\arabic{enumi})}}
\item
$\avol''_{\anony}(X|Y,\pi(x')) = \avol'_{\anony}(X|Y,x')$
holds for all $x' \in \aBig_{\otimes \RR}(X;Y)$.

\item
$\avol''_{\anony}(X|Y,\lambda x) = \lambda^{d'} \avol''_{\anony}(X|Y,x)$
holds
for all $\lambda \in \RR_{>0}$ and $x \in \aBig_{\RR}(X;Y)$.

\item
$\avol''_{\anony}\left(X|Y, x + y\right)^{\frac{1}{d'}}
\geq \avol''_{\anony}\left(X|Y, x\right)^{\frac{1}{d'}} +
\avol''_{\anony}\left(X|Y, y\right)^{\frac{1}{d'}}$ holds
for all $x, y \in \aBig_{\RR}(X;Y)$.
\end{enumerate}
\end{Corollary}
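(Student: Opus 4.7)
The plan is to descend $\avol'_{\anony}(X|Y,-)$ through the continuous open surjection
$\pi \colon \aBig_{\otimes \RR}(X;Y) \to \aBig_{\RR}(X;Y)$
provided by Proposition~\ref{prop:big:open:etc:arithmetic:Pic} (together with the openness of surjective linear maps in the strong topology, Conventions~\ref{CT:strong:topology}). Since $\pi$ is a continuous open surjection, producing the required continuous $\avol''_{\anony}$ on $\aBig_{\RR}(X;Y)$ with $\avol''_{\anony} \circ \pi = \avol'_{\anony}$ is equivalent to showing that the continuous function $\avol'_{\anony}$ is constant on the fibres of $\pi$; the corollary thus reduces to proving
\[
\avol'_{\anony}(X|Y,\, x' + n) = \avol'_{\anony}(X|Y, x')
\]
for every $x' \in \aBig_{\otimes \RR}(X;Y)$ and every $n \in N(X)$ with $x' + n \in \aBig_{\otimes \RR}(X;Y)$.

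To prove this invariance I would perform a two-step rationalisation. Writing $n = \sum_{i=1}^{r} \overline{\OO}(f_i) \otimes a_i$ with $\sum_i a_i f_i = 0$, choose a finite-dimensional $\QQ$-subspace $P \subseteq \aPic_{\QQ}(X;C^0)$ containing each $\iota(\overline{\OO}(f_i))$ and enough further rational points to place $x'$ inside $W := P \otimes_{\QQ} \RR$. Approximate each $a_i$ by rationals $a_i^{(k)} \to a_i$ and set $n_k := \sum_i \overline{\OO}(f_i) \otimes a_i^{(k)}$. Since the $a_i^{(k)}$ are rational, the elementary identity $\overline{\OO}(f) \otimes q = \iota(\overline{\OO}(qf))$ in $\aPic_{\otimes \RR}(X;C^0)$ valid for every $q \in \QQ$ rewrites $n_k = \iota(\overline{\OO}(g_k))$ with $g_k = \sum_i a_i^{(k)} f_i$, and the relation $\sum_i a_i f_i = 0$ forces $\|g_k\|_{\sup} \leq \sum_i |a_i^{(k)} - a_i|\,\|f_i\|_{\sup} \to 0$. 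Next, approximate $x'$ by a sequence $x'_j \in P$ inside $W$; because $\aBig_{\otimes \RR}(X;Y) \cap W$ is open in $W$ and contains $x'$, Proposition~\ref{prop:A:V:open}(1) places $x'_j \in \aBig_{\QQ}(X;Y)$ for $j$ large, and Proposition~\ref{prop:QSat}(1) supplies $m_j \in \ZZ_{>0}$ and $\overline{M}_j \in \aBig(X;Y)$ with $x'_j = (1/m_j)\,\iota(\overline{M}_j)$.

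For each fixed such $j$, properness together with the Lipschitz estimate Proposition~\ref{prop:basic:properties:rest:vol}(2) yields
\begin{multline*}
\bigl|\avol'_{\anony}(X|Y,\, x'_j + n_k) - \avol'_{\anony}(X|Y,\, x'_j)\bigr|  = m_j^{-d'}\bigl|\avol_{\anony}(X|Y,\, \overline{M}_j + \overline{\OO}(m_j g_k)) - \avol_{\anony}(X|Y,\, \overline{M}_j)\bigr| \\
\leq m_j^{1-d'}\, d'\, \vol(X_{\QQ}|Y_{\QQ},\, M_{j,\QQ})\, \|g_k\|_{\sup},
\end{multline*}
which tends to $0$ as $k \to \infty$. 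Combined with continuity of $\avol'_{\anony}$ on the finite-dimensional space $W$, which forces $\avol'_{\anony}(x'_j + n_k) \to \avol'_{\anony}(x'_j + n)$, this produces $\avol'_{\anony}(x'_j + n) = \avol'_{\anony}(x'_j)$; letting $j \to \infty$ and using continuity on $W$ once more delivers $\avol'_{\anony}(x' + n) = \avol'_{\anony}(x')$, as required.

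Granted this invariance, setting $\avol''_{\anony}(\pi(x')) := \avol'_{\anony}(x')$ defines a continuous function on $\aBig_{\RR}(X;Y)$ satisfying property~(1) tautologically; properties~(2) and~(3) are inherited from Corollary~\ref{cor:thm:vol:arith:rest:linear:system:otimes:R} by lifting points of $\aBig_{\RR}(X;Y)$ back through $\pi$ and exploiting its $\RR$-linearity, while uniqueness follows from the surjectivity of $\pi$ together with property~(1). The main obstacle is the $N(X)$-invariance step at non-rational $x'$: the elements $\iota(\overline{\OO}(g_k))$ do not converge to $0$ in any fixed finite-dimensional subspace of $\aPic_{\otimes \RR}(X;C^0)$, which blocks a direct continuity argument at $x'$. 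The two-step approximation bypasses this by letting the Lipschitz constant of Proposition~\ref{prop:basic:properties:rest:vol}(2) depend only on the fixed rational approximand $x'_j$ rather than on $x'$ itself.
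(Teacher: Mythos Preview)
Your proof is correct and follows the same overall strategy as the paper: descend $\avol'_{\anony}$ through $\pi$ by showing it is constant on $N(X)$-cosets, approximating $n \in N(X)$ by rational combinations $n_k = \iota(\overline{\OO}(g_k))$ with $\|g_k\|_{\sup} \to 0$, and invoking the Lipschitz bound of Proposition~\ref{prop:basic:properties:rest:vol}(2). The only organisational difference is the order in which the two approximations are taken. The paper first upgrades the Lipschitz bound itself to arbitrary $x' \in \aBig_{\otimes \RR}(X;Y)$, proving
\[
\bigl|\avol'_{\anony}(X|Y,\, x' + \overline{\OO}(f)) - \avol'_{\anony}(X|Y,\, x')\bigr| \leq d'\,\vol(X_{\QQ}|Y_{\QQ},\, x'_{\QQ})\,\|f\|_{\sup}
\]
by continuity of $\avol'_{\anony}$ and density of $\aBig_{\QQ}(X;Y)$, and then applies it once at the given (possibly irrational) $x'$ with $f = g_k$. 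Your scheme instead keeps the Lipschitz bound at rational $x'_j$ only and runs the two limits $k \to \infty$, $j \to \infty$ sequentially. The two routes are equivalent; the paper's packaged inequality is a slightly cleaner reusable intermediate statement, while your version makes the dependence on continuity inside a fixed finite-dimensional $W$ more explicit.
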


\begin{proof}
Let us begin with the following estimation.
\addtocounter{Claim}{1}
\begin{equation}
\label{eqn:cor:thm:vol:arith:rest:linear:system:R:1}
\left| \avol'_{\anony}\left(X|Y, \overline{L} + \overline{\OO}(f) \right) - \avol'_{\anony}\left(X|Y, \overline{L}\right) \right| \leq
d' \vol(X_{\QQ}|Y_{\QQ}, L_{\QQ}) \Vert f \Vert_{\sup}.
\end{equation}
for any $\overline{L} \in \aBig_{\otimes \RR}(X;Y)$ and $f \in C^0(X)$ with
$\overline{L} + \OO(f) \in \aBig_{\otimes \RR}(X;Y)$.
By using the openness of $\aBig_{\otimes \RR}(X;Y)$ and the continuity of
$\avol'_{\anony}(X|Y, - )$ on $\aBig_{\otimes \RR}(X;Y)$, it is sufficient to see \eqref{eqn:cor:thm:vol:arith:rest:linear:system:R:1} for
$\overline{L} \in \aBig_{\QQ}(X;Y)$.
Thus $\overline{L} = (1/n)\iota(\overline{M})$ for some $\overline{M} \in \aBig(X;Y)$ and $n \in \ZZ_{>0}$, and hence,
by Proposition~\ref{prop:basic:properties:rest:vol},
\begin{multline*}
\left| \avol'_{\anony}\left(X|Y, \overline{L} + \overline{\OO}(f) \right) - \avol'_{\anony}\left(X|Y, \overline{L}\right) \right| \\
=\left| \avol'_{\anony}\left(X|Y, (1/n) \iota(\overline{M} + \overline{\OO}(nf)) \right) - \avol'_{\anony}\left(X|Y, (1/n) \iota(\overline{M})\right) \right| \\
=(1/n)^{d'} \left| \avol_{\anony}\left(X|Y, \overline{M} + \overline{\OO}(nf) \right) - \avol_{\anony}\left(X|Y, \overline{M}\right) \right| \\
= (1/n)^{d'} d' \vol(X_{\QQ}|Y_{\QQ}, M_{\QQ})\Vert n f \Vert_{\sup} = d' \vol(X_{\QQ}|Y_{\QQ}, L_{\QQ})\Vert  f \Vert_{\sup}.
\end{multline*}

Let us observe that there is a function 
\[
\avol''_{\anony}(X|Y,-) : \aBig_{\RR}(X;Y) \to \RR
\]
such that the following diagram is commutative:
\[
\xymatrix{
\aBig_{\otimes \RR}(X;Y)  \ar[rr]^(.6){\avol'_{\anony}(X|Y,-)} \ar[d]_{\pi} & & \RR \\
\aBig_{\RR}(X;Y) \ar[rru]_(.5){\ \ \avol''_{\anony}(X|Y-)} & \\
}
\]
Namely, we need to show that if
$\pi(x') = \pi(y')$ for $x', y' \in \aBig_{\otimes \RR}(X;Y)$,
then 
\[
\avol'_{\anony}\left(X|Y, x' \right) = \avol'_{\anony}\left(X|Y, y' \right).
\]
As $\pi(x') = \pi(y')$, there is $z \in N(X)$ such that $y' = x' + z$.
We set $z = \overline{\OO}(f_1)\otimes a_1 + \cdots + \overline{\OO}(f_r) \otimes a_r$ with
$a_1 f_1 + \cdots + a_r f_r = 0$, where $f_1, \ldots, f_r \in C^0(X)$ and
$a_1, \ldots, a_r \in \RR$.
Let us take a sequence $\{ a_{in} \}_{n=1}^{\infty}$ in $\QQ$ such that
$a_i = \lim_{n\to\infty} a_{in}$. We set $\phi_n = a_{1n} f_1 + \cdots + a_{rn}f_r$.
Then
\begin{multline*}
\Vert \phi_n \Vert_{\sup} = \Vert (a_{1n} - a_1) f_1 + \cdots + (a_{rn}-a_r) f_r \Vert_{\sup} \\
\leq
\vert a_{1n} - a_1\vert \Vert f_1  \Vert_{\sup} + \cdots + \vert a_{rn}-a_r\vert  \Vert f_r \Vert_{\sup}.
\end{multline*}
Thus $\lim_{n\to\infty} \Vert \phi_n \Vert_{\sup} = 0$.
If we put $z_n = \overline{\OO}(f_1)\otimes a_{1n} + \cdots + \overline{\OO}(f_r) \otimes a_{rn}$,
then $\lim_{n\to\infty} z_n = z$ in $\aPic_{\otimes \RR}(X;C^0)$ and $z_n = \overline{\OO}(\phi_n)$ in $\aPic_{\QQ}(X;C^0)$.
Thus, by \eqref{eqn:cor:thm:vol:arith:rest:linear:system:R:1},
\begin{multline*}
\left| \avol'_{\anony}\left(X|Y, x' + z_n \right) - \avol'_{\anony}\left(X|Y, x'\right) \right| \\
= \left| \avol'_{\anony}\left(X|Y, x' + \overline{\OO}(\phi_n) \right) - \avol'_{\anony}\left(X|Y, x' \right) \right| \\
\leq d' \vol(X_{\QQ}|Y_{\QQ}, x'_{\QQ}) \Vert \phi_n \Vert_{\sup}
\end{multline*}
for $n\gg 1$. Therefore, as $n$ goes to $\infty$,
$\avol'_{\anony}\left(X|Y, y'\right) = \avol'_{\anony}\left(X|Y, x'\right)$.

The properties (2) and (3) are obvious.
The continuity of $\avol''_{\anony}(X|Y,-)$ follows from that $\pi$ is an open map.
\end{proof}

\section{Restricted volume for ample $C^{\infty}$-hermitian invertible sheaf}

In this section, let us consider the restricted volume for an ample $C^{\infty}$-hermitian invertible sheaf and observe several consequences.

Let $(M, \Vert\cdot\Vert)$ be a normed $\ZZ$-module, that is,
$M$ is a finitely generated $\ZZ$-module and $\Vert\cdot\Vert$ is a norm of $M \otimes_{\ZZ} \RR$.
We assume that $M$ is free.
According as \cite{ZhPos},
we define $\lambda(M,\Vert\cdot\Vert)$ and $\lambda'(M,\Vert\cdot\Vert)$ as follows:
\begin{align*}
\lambda(M,\Vert\cdot\Vert) & := \inf \left\{ \lambda \in \RR_{>0} \ \left|\ 
\begin{array}{l}
\text{there is a free basis $e_1, \ldots, e_n$ of $M$ such that } \\
\text{$\Vert e_i \Vert \leq \lambda$ for all $i$} 
\end{array}
\right\}\right.,\\
\lambda'(M,\Vert\cdot\Vert) & := \inf \left\{ \lambda' \in \RR_{>0} \ \left|\ 
\begin{array}{l}
\text{there are $e_1, \ldots, e_n \in M$ such that $e_1, \ldots, e_n $} \\
\text{form a basis over $\QQ$ and $\Vert e_i \Vert \leq \lambda'$ for all $i$} 
\end{array}
\right\}\right..
\end{align*}
By \cite[Lemma~1.7]{ZhPS} and \cite[(4.3.1)]{ZhPos}, it is known that
\renewcommand{\theequation}{\arabic{section}.\arabic{Theorem}}
\addtocounter{Theorem}{1}
\begin{equation}
\label{eqn:Zhang:lambda:inequality}
\lambda'(M,\Vert\cdot\Vert) \leq \lambda(M,\Vert\cdot\Vert) \leq \lambda'(M,\Vert\cdot\Vert) \rank M.
\end{equation}
\renewcommand{\theequation}{\arabic{section}.\arabic{Theorem}.\arabic{Claim}}
Let us begin with the following lemma.

\begin{Lemma}
\label{lem:small:sec:ideal}
Let $X$ be a projective  arithmetic variety, $\overline{A}$ an ample $C^{\infty}$-hermitian invertible sheaf on $X$ and 
$I$ an ideal sheaf of $\OO_X$. Then there are a positive integer $n_0$ and a positive number $\epsilon_0$ with the following property:
for all $n \geq n_0$, we can find a free basis $e_1, \ldots, e_N$ of $H^0(X, nA \otimes I)$ as a $\ZZ$-module such that
$\Vert e_i \Vert_{\sup} \leq e^{-n \epsilon_0}$ for all $i$, where the norm $\Vert \cdot \Vert_{\sup}$ of $H^0(X, nA \otimes I)$
is the sub-norm induced by the inclusion map $H^0(X, nA \otimes I) \hookrightarrow H^0(X, nA)$ and the sup norm of  $H^0(X, nA)$.
\end{Lemma}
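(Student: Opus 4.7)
The plan is to combine openness of arithmetic ampleness, Serre vanishing applied to the coherent sheaf $I$, and Zhang's inequality~\eqref{eqn:Zhang:lambda:inequality} relating $\lambda$ and $\lambda'$. First I will build exponential slack into the metric of $\overline{A}$, then pull small sections into $nA \otimes I$ via multiplication, and finally extract a free $\ZZ$-basis.

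For the slack, by Conventions and terminology~\ref{CT:Arakelov:positivity} I can find $m \geq 1$ with $m\overline{A} - \overline{\OO}(1) \in \aAmp(X)$. Applying the definition of arithmetic ampleness to this sheaf and rescaling by $1/m$, I extract a positive number $\epsilon_0$ and an integer $n_1$ such that, for every $n \geq n_1$, the module $H^0(X, nA)$ is generated over $\ZZ$ by sections $t$ with $\Vert t\Vert_{\sup, nA} < e^{-3n\epsilon_0}$; the case where $n$ is not a multiple of $m$ is handled by a short bridging argument using a fixed generating set of $H^0(X, rA)$ for $0 \leq r < m$.

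Next, since $A$ is algebraically ample, Serre vanishing gives $n_2$, global generators $\sigma_1, \ldots, \sigma_k \in H^0(X, n_2 A \otimes I)$ of $I \otimes n_2 A$, and a surjection
\[
H^0(X, (n-n_2)A)^{\oplus k} \twoheadrightarrow H^0(X, nA \otimes I), \qquad (t_1, \ldots, t_k) \mapsto \sum_i \sigma_i t_i,
\]
for all $n$ sufficiently large, obtained by tensoring the presentation $\OO_X^{\oplus k} \twoheadrightarrow I \otimes n_2 A$ with $(n-n_2)A$ and vanishing the $H^1$ of the kernel. Combined with the first step, $H^0(X, nA \otimes I)$ is then generated over $\ZZ$ by products $\sigma_i \cdot t$ where $t$ is a small generator of $H^0(X, (n-n_2)A)$; setting $C := \max_i \Vert \sigma_i \Vert_{\sup}$, every such product satisfies
\[
\Vert \sigma_i \cdot t\Vert_{\sup, nA} \leq C \cdot e^{-3(n-n_2)\epsilon_0} \leq e^{-2n\epsilon_0}
\]
for all $n$ large enough.

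This bounds $\lambda'(H^0(X, nA \otimes I), \Vert\cdot\Vert_{\sup}) \leq e^{-2n\epsilon_0}$. Since $H^0(X, nA \otimes I)$ is a sublattice of the free $\ZZ$-module $H^0(X, nA)$ and hence itself free, the inequality~\eqref{eqn:Zhang:lambda:inequality} yields
\[
\lambda(H^0(X, nA \otimes I), \Vert\cdot\Vert_{\sup}) \leq e^{-2n\epsilon_0} \cdot \rank H^0(X, nA \otimes I),
\]
which is at most $e^{-n\epsilon_0}$ for $n$ large because the rank grows only polynomially; this produces the required $\ZZ$-basis $e_1, \ldots, e_N$ with $\Vert e_i \Vert_{\sup} \leq e^{-n\epsilon_0}$. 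The main obstacle is the first step, namely upgrading the bare ``norm $<1$'' generation condition of arithmetic ampleness to a uniform exponential bound ``norm $<e^{-3n\epsilon_0}$''; this is an openness-of-ampleness statement, and once it is in hand the remaining arguments are essentially formal.
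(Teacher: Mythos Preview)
Your proof is correct and follows the same overall strategy as the paper: produce $\ZZ$-generators of $H^0(X, nA \otimes I)$ with exponentially small sup-norm, bound $\lambda'$, and then invoke Zhang's inequality~\eqref{eqn:Zhang:lambda:inequality} to pass from $\lambda'$ to $\lambda$. The only difference is in how the small generators are manufactured. The paper fixes $n_1$ so that $H^0(X, n_1A)$ has generators $s_1,\ldots,s_r$ with $\Vert s_i\Vert_{\sup}<1$ and the Veronese ring $R=\bigoplus_m H^0(X,mn_1A)$ is generated in degree one, then uses finite generation of $\bigoplus_n H^0(X,nA\otimes I)$ as a graded $R$-module to write everything as monomials $s_1^{k_1}\cdots s_r^{k_r}\otimes m_j$; the exponential decay comes from the high powers of the $s_i$. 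You instead first build the decay into the metric by passing to $m\overline{A}-\overline{\OO}(1)\in\aAmp(X)$, which gives small generators of $H^0(X,nA)$ directly, and then use Serre vanishing to push these into $H^0(X,nA\otimes I)$ via fixed multipliers $\sigma_i$. Your bridging step for $n\not\equiv 0\pmod m$ is itself an instance of the paper's graded-module argument (applied with $I=\OO_X$), so the two routes are really the same argument organized in a different order.
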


\begin{proof}
Since $\overline{A}$ is ample, there is a positive integer $n_1$ such that
$H^0(X, n_1A)$ is generated by sections $s$ with $\Vert s \Vert_{\sup} < 1$, and that
$R=\bigoplus_{m \geq 0} H^0(X, mn_1A)$ is generated by $H^0(X, n_1A)$.
Let $s_1, \ldots, s_r$ be non-zero generators of $H^0(X, n_1A)$ with $\Vert s_i \Vert_{\sup} < 1$.
Since $\bigoplus_{n \geq 0} H^0(X, nA \otimes I)$ is finitely generated as
a $R$-module, we can find $m_1, \ldots, m_s$ such that
$m_i \in H^0(X, a_i A \otimes I)$ and
$m_1, \ldots, m_s$ generate $\bigoplus_{n \geq 0} H^0(X, nA \otimes I)$ as a $R$-module.
We set
\[
\begin{cases}
e^{-c_1} = \max \{ \Vert s_1 \Vert_{\sup}, \ldots, \Vert s_r \Vert_{\sup} \}, \\
e^{c_2} = \max \{ \Vert m_1 \Vert_{\sup}, \ldots, \Vert m_s\Vert_{\sup} \}.
\end{cases}
\]
As $\Vert s_i \Vert_{\sup} < 1$ for $i=1, \ldots, r$, we have $c_1 > 0$.
By our construction, $H^0(X, nA \otimes I)$ is generated by elements of the form $s_1^{k_1} \otimes \cdots \otimes s_r^{k_r} \otimes m_i$
with $n_1(k_1 + \cdots + k_r) + a_i = n$ and $(k_1, \ldots, k_r) \in (\ZZ_{\geq 0})^r$. On the other hand,
\begin{multline*}
\Vert s_1^{k_1} \otimes \cdots \otimes s_r^{k_r} \otimes m_i \Vert_{\sup}
\leq \Vert s_1 \Vert_{\sup}^{k_1} \cdots \Vert s_r\Vert_{\sup}^{k_r} \Vert m_i \Vert_{\sup}  \\
\leq \exp(-c_1(k_1 + \cdots + k_r) + c_2)
\leq \exp(-(c_1/n_1) n + c_3),
\end{multline*}
where $c_3 = \max_{i=1, \ldots, s} \{ (c_1/n_1)a_i + c_2 \}$, which means that
\[
\lambda'(H^0(X, nA \otimes I), \Vert\cdot\Vert_{\sup}) \leq \exp(-(c_1/n_1) n + c_3).
\]
Note that there is a positive number $\epsilon_0$ such that
\[
\rank H^0(X, nA) \exp(-(c_1/n_1) n + c_3) \leq \exp(-\epsilon_0 n)
\]
for $n \gg 1$.
Thus the lemma follows from \eqref{eqn:Zhang:lambda:inequality}.
\end{proof}

\begin{Theorem}
\label{thm:rest:ample:vol}
Let $X$ be a projective arithmetic variety and $Y$ a $d'$-dimensional  arithmetic subvariety of $X$.
\begin{enumerate}
\renewcommand{\labelenumi}{\rom{(\arabic{enumi})}}
\item
If $\overline{A}$ is an ample $C^{\infty}$-hermitian invertible sheaf on $X$, then
\[
\lim_{m\to\infty} \frac{\log \# \Image(\aH(X, m\overline{A}) \to H^0(X|Y, mA))}{m^{d'}/d'!} = 
\avol_{\quot}(X|Y, \overline{A}) > 0.
\]

\item
If $X$ is generically smooth and
$\overline{A}$ is an ample $C^{\infty}$-hermitian invertible sheaf on $X$, then
\[
\avol_{\quot}(X|Y, \overline{A}) = \adeg \left( \acherncl_1(\rest{\overline{A}}{Y})^{d'} \right).
\]
\end{enumerate}
\end{Theorem}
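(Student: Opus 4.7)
The inequality
\[
\limsup_{m\to\infty} \frac{\log\#\Image(\aH(X,m\overline{A}) \to H^0(X|Y,mA))}{m^{d'}/d'!} \leq \avol_{\quot}(X|Y,\overline{A})
\]
is immediate since the image is contained in $\aH_{\quot}(X|Y,m\overline{A})$. For the matching lower bound my plan is to apply Lemma~\ref{lem:small:sec:ideal} to the defining ideal sheaf $I_Y$ of $Y$: this produces, for $m\gg 0$, a free $\ZZ$-basis $e_1,\ldots,e_{N_m}$ of $H^0(X,mA\otimes I_Y)$ with $\Vert e_i\Vert_{\sup}\leq e^{-m\epsilon_0}$ for some fixed $\epsilon_0>0$. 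Fix a small rational $\epsilon>0$. Given $\bar s\in \aH_{\quot}(X|Y,m(\overline{A}-\overline{\OO}(\epsilon)))$, choose a real lift $s\in H^0(X,mA)_{\RR}$ of $\bar s$ with $\Vert s\Vert_{\sup}\leq e^{-m\epsilon}+1/m$ together with any integral lift $s_0\in H^0(X,mA)$, expand $s-s_0=\sum_i a_ie_i$ with $a_i\in\RR$, round each $a_i$ to a nearest integer $n_i$, and set $s':=s_0+\sum_i n_ie_i$. Then $s'$ still lifts $\bar s$ and
\[
\Vert s'\Vert_{\sup}\leq e^{-m\epsilon}+1/m+\tfrac{N_m}{2}e^{-m\epsilon_0},
\]
which is $\leq 1$ for $m\gg 0$ because $N_m$ is polynomial in $m$. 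Hence $\aH_{\quot}(X|Y,m(\overline{A}-\overline{\OO}(\epsilon)))$ is contained in the image for $m$ large, and Theorem~\ref{thm:vol:arith:rest:linear:system}(1) applied to the $Y$-big sheaf $\overline{A}-\overline{\OO}(\epsilon)$, together with the $C^0$-continuity statement of Proposition~\ref{prop:basic:properties:rest:vol}(2), let $\epsilon\to 0$ to conclude the equality. Positivity $\avol_{\quot}(X|Y,\overline{A})>0$ follows from Proposition~\ref{prop:Y:big:span:Okounkov:body} applied to the ample (hence $Y$-big) $\overline{A}$: the Okounkov body that appears in the proof of Theorem~\ref{thm:vol:arith:rest:linear:system}(1) has positive Lebesgue volume.

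\textbf{Part (2), upper bound.} Let $\mu:\widetilde{Y}\to Y$ be a generic resolution; since $X_{\QQ}$ is smooth we may take $\widetilde{Y}_{\QQ}$ smooth and, after passing to the normalization of $Y$ first, arrange $\mu_*\OO_{\widetilde{Y}}=\OO_Y$, so that pullback identifies global section spaces and preserves sup norms, while $\adeg(\acherncl_1(\mu^*\rest{\overline{A}}{Y})^{d'})=\adeg(\acherncl_1(\rest{\overline{A}}{Y})^{d'})$. Arithmetic Hilbert--Samuel on $\widetilde{Y}$ applied to the ample $\mu^*\rest{\overline{A}}{Y}$ (Gillet--Soul\'e, Abbes--Bouche, Zhang) then gives
\[
\log\#\aH(\widetilde{Y},m\mu^*\rest{\overline{A}}{Y})=\frac{\adeg(\acherncl_1(\rest{\overline{A}}{Y})^{d'})}{d'!}\,m^{d'}+o(m^{d'}).
\]
The elementary inequality $\Vert\bar s\Vert_{\sup,Y}\leq \Vert\bar s\Vert^{X|Y}_{\sup,\quot}$ (the sup norm on $Y$ is dominated by the sup norm of any lift to $X$) yields $\aH_{\quot}(X|Y,m\overline{A})\subseteq \aH(\widetilde{Y},m\mu^*\rest{\overline{A}}{Y})$, and combined with the display above this gives $\avol_{\quot}(X|Y,\overline{A})\leq \adeg(\acherncl_1(\rest{\overline{A}}{Y})^{d'})$.

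\textbf{Part (2), lower bound and main obstacle.} The remaining step is the opposite inclusion up to a sub-exponential correction, and this is the principal obstacle of the proof. The plan is to establish an arithmetic $L^2$-style extension estimate: for $\overline{A}$ ample and $m\gg 0$, every $\bar s\in H^0(Y,mA|_Y)$ admits a lift $s\in H^0(X,mA)$ with $\Vert s\Vert_{\sup}\leq C_m\Vert\bar s\Vert_{\sup,Y}$, where $C_m$ grows at most polynomially in $m$. Such an estimate rests on ampleness of $\overline{A}$ together with Ohsawa--Takegoshi type extension on the complex fibres and Minkowski-style control over $\Spec(\ZZ)$ of the cokernel of $H^0(X,mA\otimes I_Y)\to H^0(X,mA)$; the small-section Lemma~\ref{lem:small:sec:ideal} feeds into the same input. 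Granted this estimate, one obtains
\[
\aH(\widetilde{Y},m\mu^*\rest{\overline{A}}{Y}-\overline{\OO}(\log C_m))\subseteq \aH_{\quot}(X|Y,m\overline{A}),
\]
and a second application of arithmetic Hilbert--Samuel on $\widetilde{Y}$, now to $\mu^*\rest{\overline{A}}{Y}-\overline{\OO}((\log C_m)/m)$, combined with the continuity of the Arakelov intersection number in the $C^0$-metric direction, shows that the leading coefficient of the left-hand side tends to $\adeg(\acherncl_1(\rest{\overline{A}}{Y})^{d'})/d'!$ as $(\log C_m)/m\to 0$. Invoking part~(1) to pass from $\aH_{\quot}$ to the image of $\aH(X,m\overline{A})$ then delivers all three limits in the statement.
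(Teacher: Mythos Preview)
Your Part~(1) argument for the equality is essentially the paper's: both use Lemma~\ref{lem:small:sec:ideal} to find a small basis of $H^0(X,mA\otimes I_Y)$ and round a real lift to an integral one, concluding $\aH_{\quot}(X|Y,m(\overline{A}-\overline{\OO}(\epsilon)))\subseteq\Image(\aH(X,m\overline{A}))$ for $m\gg0$. Your positivity argument, however, is not complete. The inequality \eqref{eqn:thm:vol:arith:rest:linear:system:1} reads $\bigl|\avol-\vol(\Delta_p)d'!\log p\bigr|\leq cd'!/\log p$, and while $\vol(\Delta_p)>0$ for each $p$, nothing prevents $\vol(\Delta_p)\log p\to0$, so the inequality alone does not force $\avol>0$. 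The paper instead gives an elementary direct count: choose $n_0$ so that $H^0(X,n_0A)$ has a free basis $\Sigma$ of strictly small sections with $\Sym^m$ surjective onto $H^0(X,mn_0A)$ and restriction to $Y$ surjective; then integer combinations of $r_m$ suitably chosen monomials, with coefficients in $[0,e^{cm}/r_m]$, inject into the image and have sup-norm $\leq1$, giving $\#\Image\geq(e^{cm}/r_m)^{r_m}$.

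For Part~(2), the extension estimate you describe as ``the principal obstacle'' is precisely Zhang's theorem \cite[Theorem~2.2]{ZhPos}: for $X$ generically smooth and $\overline{A}$ ample, every section of $mA|_Y$ lifts to $H^0(X,mA)_{\RR}$ with sup-norm inflated by at most $e^{m\epsilon}$. The paper cites this directly. Given that input, your route through arithmetic Hilbert--Samuel on a resolution $\widetilde{Y}$ is reasonable, but note that $\mu^*(\overline{A}|_Y)$ is only nef and big on $\widetilde{Y}$, not ample, so you cannot invoke Hilbert--Samuel in its most naive form there; you would need either a nef version or to argue via birational invariance of $\avol$. The paper sidesteps this entirely: instead of Hilbert--Samuel on $Y$, it applies the Fujita-type approximation just proved (Theorem~\ref{thm:vol:arith:rest:linear:system}(3)) to $\rest{\overline{A}-\overline{\OO}(\epsilon)}{Y}$, obtaining generators $s_1,\ldots,s_l\in\aH(Y,n_1(\overline{A}-\overline{\OO}(\epsilon))|_Y)$ whose monomial products already account for $\avol(Y,\overline{A}|_Y)$ up to $\epsilon$, and then lifts only these monomials via Zhang. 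This keeps the argument internal to the paper's machinery and avoids any regularity hypothesis on $Y$.
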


\begin{proof}
(1)
First let us see 
\[
\lim_{m\to\infty} \frac{\log \# \Image(\aH(X, m\overline{A}) \to H^0(X|Y, mA))}{m^{d'}/d'!} = 
\avol_{\quot}(X|Y, \overline{A}).
\]
Let $I$ be  the defining ideal sheaf of $Y$.
Let $\epsilon$ be an arbitrary positive number such that $\overline{A} - \overline{\OO}(\epsilon)$ is ample. 

\begin{Claim}
$\aH_{\quot}(X|Y, m(\overline{A} - \overline{\OO}(\epsilon))) \subseteq
\Image(\aH(X, m\overline{A}) \to H^0(X|Y, mA))$
for $m \gg 1$.
\end{Claim}

By Lemma~\ref{lem:small:sec:ideal}, there is a positive number $\epsilon_0$ such that,
if $m \gg 1$, then we can find a free basis $e_1, \ldots, e_N$ of $H^0(X, m A \otimes I)$ such that
$\Vert e_i \Vert_{\sup} \leq e^{-\epsilon_0 m}$ for all $i$. 
We choose $e_{N+1}, \ldots, e_M \in H^0(X, mA)$ such that
$\rest{e_{N+1}}{Y}, \ldots, \rest{e_M}{Y}$ form a free basis of $H^0(X|Y, mA)$.
Then $e_1, \ldots, e_M$ form a free basis of $H^0(X, mA)$.
Let $s \in \aH_{\quot}(X|Y, m(\overline{A} - \overline{\OO}(\epsilon)))$.
Then there is $s' \in H^0(X, mA) \otimes \RR$ such that
$\rest{s'}{Y} = s$ and $\Vert s' \Vert_{\sup} = \Vert s \Vert_{\sup,\quot}^{X|Y} \leq e^{-\epsilon m}$.
We set $s' = \sum_{i=1}^M c_i e_i$ ($c_i \in \RR$).
Since
\[
\rest{s'}{Y} = \sum_{i=N+1}^M  \rest{c_i e_{i}}{Y} = s \in H^0(X|Y, mA),
\]
we have $c_i \in \ZZ$ for all $i=N+1, \ldots, M$.
Here we put 
\[
\tilde{s} = \sum_{i=1}^N \lceil c_i \rceil e_i + \sum_{i=N+1}^M c_i e_i.
\]
Then $\rest{\tilde{s}}{Y} = s$ and
\[
\Vert \tilde{s} \Vert_{\sup} = \left\Vert s' + \sum_{i=1}^N ( \lceil c_i \rceil - c_i)e_i \right\Vert_{\sup} \leq e^{-\epsilon m}  +
e^{-\epsilon_0 m} \rank H^0(X, mA),
\]
which means that, if $m \gg 1$, then $\tilde{s} \in \aH(X, m\overline{A})$.
Therefore, $s \in \Image(\aH(X, m\overline{A}) \to H^0(X|Y, mA))$.
\CQED

By the above claim,
\begin{multline*}
\avol_{\quot}(X|Y, \overline{A} - \overline{\OO}(\epsilon)) \leq
\liminf_{m\to\infty} \frac{\log \# \Image(\aH(X, m\overline{A}) \to H^0(X|Y, mA))}{m^{d'}/d'!} \\
\leq
\limsup_{m\to\infty} \frac{\log \# \Image(\aH(X, m\overline{A}) \to H^0(X|Y, mA))}{m^{d'}/d'!} \leq
\avol_{\quot}(X|Y, \overline{A}).
\end{multline*}
Hence the assertion follows because, by Proposition~\ref{prop:basic:properties:rest:vol},
\[
\avol_{\quot}(X|Y, \overline{A} - \overline{\OO}(\epsilon)) \geq 
\avol_{\quot}(X|Y, \overline{A})  - d'\epsilon\vol(X_{\QQ}|Y_{\QQ}, A_{\QQ})
\]
and $\epsilon$ is arbitrary small number.

\smallskip
Next we observe 
\[
\lim_{m\to\infty} \frac{\log \# \Image(\aH(X, m\overline{A}) \to H^0(X|Y, mA))}{m^{d'}/d'!} > 0.
\]
Let us choose a sufficiently large integer $n_0$ with the following properties:
\begin{enumerate}
\renewcommand{\labelenumi}{\rom{(\alph{enumi})}}
\item
$H^0(X, n_0A)$ has a free basis $\Sigma$ consisting of strictly small sections.

\item
$\Sym^m(H^0(X, n_0A)) \to H^0(X, mn_0A)$ is surjective for all $m \geq 1$.

\item
$H^0(X, mn_0A) \to H^0(Y, \rest{mn_0A}{Y})$ is surjective for all $m \geq 1$.
\end{enumerate}
We set $e^{-c} = \max \{ \Vert s \Vert_{\sup} \mid s \in \Sigma \}$. Then $c > 0$.
Moreover, we put
\[
\Sigma_m = \{ s_1 \otimes \cdots \otimes s_m \mid s_1, \ldots, s_m \in \Sigma \}.
\]
Note that $\Sigma_m$ generates $H^0(X, mn_0A)$ as a $\ZZ$-module 
and that $\Vert s \Vert_{\sup} \leq e^{-mc}$ for all $s \in \Sigma_m$.
Let $r_m$ be the rank of $H^0(Y, \rest{mn_0A}{Y})$.
Since $\{ \rest{s}{Y} \mid s \in \Sigma_m \}$ gives rise to a generator of $H^0(Y, \rest{mn_0A}{Y})$,
we can find $s_1, \ldots, s_{r_m} \in \Sigma_m$ such that
$\{ \rest{s_1}{Y}, \ldots, \rest{s_{r_m}}{Y} \}$ forms a basis of $H^0(Y, \rest{mn_0A}{Y}) \otimes {\QQ}$.
We put 
\[
S_m = \{ (a_1, \ldots, a_{r_m}) \in \ZZ^{r_m} \mid 0 \leq a_i \leq e^{cm}/r_m \}.
\]
Then the map $S_m \to H^0(Y, \rest{mn_0A}{Y})$ given by
\[
(a_1, \ldots, a_{r_m}) \mapsto a_1 \rest{s_1}{Y} + \cdots + a_{r_m} \rest{s_{r_m}}{Y}
\]
is injective.
Moreover, for $(a_1, \ldots, a_{r_m}) \in S_m$,
\[
\left\Vert \sum_{i=1}^{r_m} a_is_i  \right\Vert_{\sup} \leq \sum_{i=1}^{r_m} a_i \Vert s_i \Vert_{\sup} \leq \sum_{i=1}^{r_m} (e^{cm}/r_m) e^{-cm} = 1.
\]
Hence
\[
\#\Image(\aH(X, mn_0\overline{A}) \to H^0(X|Y, mn_0A))
\geq \#(S_m) \geq (e^{cm}/r_m)^{r_m}.
\]
Thus the second assertion follows. 

\medskip
(2) 
It is sufficient to show that $\avol_{\quot}(X|Y, \overline{A}) = \avol(Y, \rest{\overline{A}}{Y})$.
Let $\epsilon$ be an arbitrary positive number such that $\overline{A} - \overline{\OO}(\epsilon)$ is ample.
By (3) in Theorem~\ref{thm:vol:arith:rest:linear:system},
there is a positive number $n_1$ such that if we set $\aH(Y, n_1(\overline{A} - \overline{\OO}(\epsilon))) = \{ s_1, \ldots, s_l \}$,
then
\[
\liminf_{m\to\infty} \frac{\log \# \CL(\{ s_1^{a_1} \otimes \cdots \otimes s_l^{a_l} \mid (a_1, \ldots, a_l) \in \Gamma_m \})}{m^{d'}n_1^{d'}/d'!}
\geq \avol(Y, \rest{\overline{A} - \overline{\OO}(\epsilon)}{Y}) - \epsilon,
\]
where $\Gamma_m = \{ (a_1, \ldots, a_l) \in (\ZZ_{\geq 0})^l \mid a_1 + \cdots + a_l = m \}$.
Note that $\Vert s_i \Vert_{\sup} \leq e^{-n_1\epsilon}$ for all $i$.
By \cite[Theorem~2.2]{ZhPos}, if $m \gg 1$, then, for any $a_1, \ldots, a_l \in \ZZ_{\geq 0}$ with $\ a_1 + \cdots + a_l = m$,
there is $s(a_1, \ldots, a_l) \in H^0(X, mn_1A) \otimes \RR$ such that
$\rest{s(a_1, \ldots, a_l)}{Y} = s_1^{a_1} \otimes \cdots \otimes s_l^{a_l}$ and
\[
\Vert s(a_1, \ldots, a_l) \Vert_{\sup} \leq e^{m\epsilon}  \Vert s_1\Vert_{\sup}^{a_1} \cdots \Vert s_l\Vert_{\sup}^{a_l}
\leq e^{-\epsilon m (n_1 - 1)} < 1,
\]
which means that $s_1^{a_1} \otimes \cdots \otimes s_l^{a_l} \in \aH_{\quot}(X|Y, mn_1\overline{A})$.
Therefore, 
\[
 \CL(\{ s_1^{a_1} \otimes \cdots \otimes s_l^{a_l} \mid (a_1, \ldots, a_l) \in \Gamma_m \}) \subseteq \aH_{\quot}(X|Y, mn_1\overline{A}).
\]
Hence
\begin{align*}
\avol(Y, \rest{\overline{A}}{Y}) & \geq \avol_{\quot}(X|Y, \overline{A}) \\
& = \lim_{m\to\infty} \frac{\log \# \aH_{\quot}(X|Y, mn_1\overline{A})}{(mn_1)^{d'}/d'!} \\
& \geq \liminf_{m\to\infty} \frac{\log \# \CL(\{ s_1^{a_1} \otimes \cdots \otimes s_l^{a_l} \mid (a_1, \ldots, a_l) \in \Gamma_m \})}{m^{d'}n_1^{d'}/d'!} \\
& \geq  \avol(Y, \rest{\overline{A} - \overline{\OO}(\epsilon)}{Y}) - \epsilon \geq \avol(Y, \rest{A}{Y}) - \epsilon(d'\vol(Y_{\QQ}, A_{\QQ}) + 1),
\end{align*}
as required.
\end{proof}

\begin{Corollary}
Let $\overline{L} \mapsto \aH_{\anony}(X|Y, \overline{L})$ be an assignment of arithmetic restricted linear series from $X$ to $Y$.
Then we have the following.
\begin{enumerate}
\renewcommand{\labelenumi}{\rom{(\arabic{enumi})}}
\item
If $X$ is generically smooth and
$\overline{A}$ is an ample $C^{\infty}$-hermitian invertible sheaf on $X$,
then 
\[
\avol_{\anony}(X|Y, \overline{A}) = \avol(Y, \rest{\overline{A}}{Y}).
\]

\item
If $\overline{L}$ is a $Y$-big continuous hermitian invertible sheaf on $X$,
then 
\[
\avol_{\anony}(X|Y, \overline{L}) > 0.
\]

\item
If $x \in \aBig_{\otimes \RR}(X;Y)$, then $\avol_{\anony}(X|Y, x) > 0$.
\end{enumerate}
\end{Corollary}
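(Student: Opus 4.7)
For (1), the idea is a sandwich. Property~(1) of an assignment gives
\[
\Image(\aH(X, m\overline{A}) \to H^0(X|Y, mA)) \subseteq \aH_{\anony}(X|Y, m\overline{A}),
\]
while the containment $\aH_{\anony}(X|Y, m\overline{A}) \subseteq \aH(Y, \rest{m\overline{A}}{Y})$ is automatic since any arithmetic linear series of $\rest{m\overline{A}}{Y}$ is by definition sup-bounded by $1$ on $Y(\CC)$. Taking $\log\#$ and dividing by $m^{d'}/d'!$, the first limit in Theorem~\ref{thm:rest:ample:vol}(1) gives the left-hand inequality of
\[
\avol_{\quot}(X|Y, \overline{A}) \;\leq\; \avol_{\anony}(X|Y, \overline{A}) \;\leq\; \avol(Y, \rest{\overline{A}}{Y}),
\]
and the right-hand one is just monotonicity of $\limsup$. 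Under generic smoothness, Theorem~\ref{thm:rest:ample:vol}(2) identifies the two extremes, pinning the middle term.

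For (2), I first reduce to the ample case. Since $\overline{L}$ is $Y$-big, the characterization of $Y$-bigness (the first proposition in Section~6) produces an ample $C^\infty$-hermitian invertible sheaf $\overline{A}$ and a positive integer $n$ with $n\overline{L} \geq_Y \overline{A}$. Then Proposition~\ref{prop:basic:properties:rest:vol}(1) yields
\[
\avol_{\anony}(X|Y, n\overline{L}) \;\geq\; \avol_{\anony}(X|Y, \overline{A}).
\]
Applying property~(1) of the assignment once more, $\avol_{\anony}(X|Y, \overline{A}) \geq \avol_{\quot}(X|Y, \overline{A})$, and the latter is strictly positive by the \emph{second} assertion of Theorem~\ref{thm:rest:ample:vol}(1) (whose proof does not rely on generic smoothness). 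Hence $\avol_{\anony}(X|Y, n\overline{L}) > 0$, and Theorem~\ref{thm:vol:arith:rest:linear:system}(1) rescales this to $\avol_{\anony}(X|Y, \overline{L}) = n^{-d'}\avol_{\anony}(X|Y, n\overline{L}) > 0$.

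For (3), I read the statement as referring to the extension $\avol'_{\anony}$ furnished by Corollary~\ref{cor:thm:vol:arith:rest:linear:system:otimes:R}. Any $x \in \aBig_{\otimes \RR}(X;Y) = \Cone_{\RR}(\aBig_{\QQ}(X;Y))$ decomposes as $x = \lambda_1 x_1 + \cdots + \lambda_r x_r$ with $\lambda_i \in \RR_{>0}$ and $x_i = (1/n_i)\iota(\overline{L}_i)$ for some $\overline{L}_i \in \aBig(X;Y)$. By part~(2), $\avol_{\anony}(X|Y, \overline{L}_i) > 0$, so properties~(1) and~(2) of Corollary~\ref{cor:thm:vol:arith:rest:linear:system:otimes:R} give $\avol'_{\anony}(X|Y, \lambda_i x_i) = (\lambda_i/n_i)^{d'}\avol_{\anony}(X|Y, \overline{L}_i) > 0$. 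Iterating the superadditivity in Corollary~\ref{cor:thm:vol:arith:rest:linear:system:otimes:R}(3) yields $\avol'_{\anony}(X|Y, x)^{1/d'} \geq \sum_i \avol'_{\anony}(X|Y, \lambda_i x_i)^{1/d'} > 0$, as required. No serious obstacle arises; the only subtle point is recognizing that property~(1) of an \emph{arbitrary} assignment, not any specific feature of $\aH_{\quot}$, is what transfers the positivity of Theorem~\ref{thm:rest:ample:vol}(1) to the general setting.
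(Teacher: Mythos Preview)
Your proof is correct and follows essentially the same approach as the paper. The paper's proof of (1) simply says ``is a consequence of Theorem~\ref{thm:rest:ample:vol}'' without writing out the sandwich explicitly, and for (2) it cites Theorem~\ref{thm:rest:ample:vol}(1) directly for $\avol_{\anony}(X|Y,\overline{A})>0$ without isolating the intermediate comparison with $\avol_{\quot}$ via property~(1) of the assignment; your version makes both of these steps explicit, which is a minor improvement in clarity rather than a different strategy.
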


\begin{proof}
(1) is a consequence of Theorem~\ref{thm:rest:ample:vol}.

(2)
As $\overline{L}$ is $Y$-big, there are a positive integer $n$ and an ample
$C^{\infty}$-hermitian invertible sheaf $\overline{A}$ on $X$ such that
$n \overline{L} \geq_Y \overline{A}$, so that, by (1) in Proposition~\ref{prop:basic:properties:rest:vol} and
(1) in Theorem~\ref{thm:rest:ample:vol},
\[
n^{d'} \avol_{\anony}(X|Y, \overline{L}) = \avol_{\anony}(X|Y, n \overline{L}) \geq \avol_{\anony}(X|Y, \overline{A}) > 0.
\]

(3) If $x \in \aBig_{\otimes \RR}(X;Y)$, there are positive numbers $a_1, \ldots, a_r$ and
$\overline{L}_1, \ldots, \overline{L}_r \in \aBig(X;Y)$ such that
$x = \overline{L}_1 \otimes a_1 + \cdots + \overline{L}_r \otimes a_r$. Hence, by (2)  and Corollary~\ref{cor:thm:vol:arith:rest:linear:system:otimes:R},
\begin{align*}
\avol_{\anony}(X|Y, x)^{\frac{1}{d'}} & \geq \avol_{\anony}(X|Y,  \overline{L}_1 \otimes a_1)^{\frac{1}{d'}} + \cdots + \avol_{\anony}(X|Y,  \overline{L}_r \otimes a_r)^{\frac{1}{d'}} \\
& \geq a_1 \avol_{\anony}(X|Y,  \overline{L}_1)^{\frac{1}{d'}} + \cdots + a_r \avol_{\anony}(X|Y,  \overline{L}_r)^{\frac{1}{d'}} > 0.
\end{align*}
\end{proof}

\end{document}